\newtheorem{thm}{Theorem}[section]
\newtheorem{lem}[thm]{Lemma}
\newtheorem{cor}[thm]{Corollary}
\newtheorem{prop}[thm]{Proposition}
\theoremstyle{definition}
\newtheorem{defn}[thm]{Definition}
\theoremstyle{remark}
\newtheorem{rmk}[thm]{Remark}
\numberwithin{equation}{section}
\newcommand{\rev}{}%
\def\grad{\nabla}
\newcommand{\W}[1][1,p]{W^{#1}(\Omega)}
\renewcommand{\L}[1][p]{L^{#1}(\Omega)}
\renewcommand{\H}[1][1]{W^{#1,2}(\Omega)}
\newcommand{\Lb}[1][2]{L^{#1}(\Gamma)}
\newcommand{\into}{\hookrightarrow}
\newcommand{\tinto}{\mathrel{\overset{\gamma}{\to}}}
\newcommand{\R}{\mathbb{R}}
\newcommand{\E}{\mathscr{E}}
\let\div\undefined
\DeclareMathOperator{\div}{div}
\DeclareMathOperator{\spn}{span}
\newcommand{\ptag}[1]{ \tag{\ref{#1}$'$} } 
\DeclareMathOperator*{\esssup}{ess\,sup}
\def\G{\Gamma}
\def\O{\Omega}
\def\grad{\nabla}
\newcommand{\reals}{\mathbb{R}}
\newcommand{\naturals}{\mathbb{N}}
\newcommand{\plap}{\Delta_p}
\author[N. J. Kass]{Nicholas J. Kass}
\address{Department of Mathematics, University of Nebraska--Lincoln, Lincoln, NE  68588-0130, USA} \email{nkass@huskers.unl.edu}
\author[M. A. Rammaha]{Mohammad A. Rammaha}
\address{Department of Mathematics, University of Nebraska--Lincoln, Lincoln, NE  68588-0130, USA} \email{mrammaha1@unl.edu}
\thanks{This research was partially supported by NSF grant DMS-1211232.}
\title[Wave equations of the $p$-Laplacian type]{Local and global existence of solutions to a strongly damped wave equation of the $p$-Laplacian type}
\date{November 29, 2017}
\subjclass[2010]{Primary: 35L05, 35L20, 35L72 Secondary: 58J45}
\keywords{wave equation, $p$-Laplacian, supercritical sources, local existence, generalized Robin condition}
\begin{document}

\begin{abstract} This article focuses  on  a quasilinear wave equation of $p$-Laplacian type:
	\[
	u_{tt} - \Delta_p u -\Delta u_t = 0 
	\]
	in a bounded domain $\O \subset \reals^3$ with a sufficiently smooth boundary $\G=\partial \O$ subject to a generalized Robin boundary
	condition featuring boundary damping and a nonlinear source term. The operator $\plap$, $2<p<3$, denotes the classical $p$-Laplacian. The nonlinear boundary term $f(u)$  is a source feedback that  is allowed to  have a \emph{supercritical} exponent, in the sense that the associated Nemytskii operator is not locally Lipschitz from $\W$ into $L^2(\G)$.  
Under suitable assumptions on the parameters we provide a rigorous  proof of existence of a local weak solution which can be extended globally in time provided the source term satisfies an appropriate growth condition. 
\end{abstract}

\maketitle

\section{Introduction}\label{S1}

\subsection{The model} 
This paper is concerned with the  existence of local and global solutions to the quasilinear initial-boundary value problem:
\begin{align}
\label{wave}
\begin{cases}
u_{tt}-\Delta_p u -\Delta u_t = 0 &\text{ in } \Omega \times (0,T),\\[.1in]
(u(0),u_t(0))=(u_0,u_1),\\[.1in]
|\grad u|^{p-2}\partial_\nu u + |u|^{p-2}u + \partial_\nu u_t + u_t = f(u)&\text{ on }\Gamma \times(0,T),
\end{cases}
\end{align}
for given data $(u_0,u_1)\in\W \times \L[2]$ and $2<p<3$. Here, $\Delta_p$ is the $p$-Laplacian given by: 
\begin{align*}
\Delta_p u=\div(|\grad u|^{p-2}\grad u),
\end{align*}
 $\Omega$ is a bounded open domain in $\R^3$ with boundary $\G$ of class $C^2$, and $\partial_\nu$ denotes the outward normal derivative on $\G$.
Additionally, we assume that the boundary source feedback term $f\in C^1(\R)$ is an $\R$-valued function such that  
\begin{align}\label{ass:f}
|f'(u)|\leq C(|u|^{r-1}+1)\text{ where }1\leq r < \frac{4p}{3(3-p)}.
\end{align}
This restriction on the exponent $r$ is inherited from the problem itself and the  Sobolev embedding and trace theorems. In this paper, we focus on the physically relevant case of dimension three, but analogous results in other space  dimensions are possible with corresponding changes in the parameters of the relevant spaces and various Sobolev embeddings.  

\subsection{Literature overview and new contributions}
Strongly damped wave equations of the form
\[
u_{tt} -\Delta u - \Delta u_t = f
\]
 have been given great attention in the literature and are broadly applicable to physical models of damped vibrations.  The damping term $\Delta u_t$ in \eqref{wave} is frequently referred to as Voigt damping in the literature which emphasizes its role in describing so-called Kelvin-Voigt materials exhibiting both elastic and viscous properties.

Important background in damped wave equations is provided by Webb in \cite{MR586981} as well as, for instance, \cite{MR1112054} where $-\Delta$ is replaced by a positive, linear operator. Similarly, \cite{MR797319} contains a more general formulation of a damped wave equation in the form $u_{tt} + Au + Bu_t = f$ with possibly dissimilar operators $A,B$ providing propagation and damping, respectively. 

A sampling of more closely related works which contain a nonlinearity such as the $p$-Laplacian in \eqref{wave} are as follows:
\begin{itemize}
	\item In \cite{MR1634008}, Chen, Guo, and Wang studied equations of the form 
	\begin{align*}
	u_{tt}-\sigma(u_x)_x +f(u)=g(x)
	\end{align*}
	where $\sigma:\R\to\R$ is smooth with $\sigma(0)=0$ and $\sigma'\geq r_0>0$.
	
	\item Biazutti's work in \cite{Bia:95:NA} subsumes the Cauchy problem for the equation 
	\begin{align*}
	 u_{tt}-\Delta_p u -\Delta u_t = f,
	\end{align*}
	while the corresponding problem with zero Dirichl\'{e}t boundary condition and an interior source of supercritical order is treated by Pei et al. in \cite{PRT-p-Laplacain}.
	
	\item Kalantarov et al. in \cite{Bia:95:NA} provide a method in which to establish existence of solutions to the wave equation with structural damping  
	\begin{align*}
	u_{tt} - \Delta u + (\Delta)^\alpha u_t +f(u) = g
	\end{align*} 
	for $\alpha\in (1/2,1]$ and zero Dirichl\'{e}t boundary where $f$ satisfies
	\begin{align*}
	-C+a|s|^q \leq f'(s) \leq C(1+|s|^1)
	\end{align*}
	on $\R$ for positive constants $C,a,q$.
\end{itemize}

The boundary condition imposed in \eqref{wave} is a generalized Robin condition which is not widely  represented in the literature.  A closely related problem is studied  by Vitillaro \cite{V3,V2}:
\begin{align*}
\begin{cases}
u_{tt}-\Delta u=0&\text{ in }\Omega\times (0,T),\\
\partial_\nu u + |u_t|^{m-2}u_t = |u|^{p-2}u&\text{ on }\Gamma\times (0,T).
\end{cases}
\end{align*}

Existence results for problems such as \eqref{wave} have a non-trivial history and are not, in general, amenable to semigroup methods. In this paper, we provide a careful application of the Galerkin method similar to \cite{Bia:95:NA, MR0199519, RW, PRT-p-Laplacain}, amongst others.

In this manuscript, several technical challenges are present, chiefly involving the identification of the limiting value of $\Delta_p u_N$ with the value of $\Delta_p u$ which we carefully accomplish  through the use of monotone operator theory. Our detailed approach also highlights the crucial difficulty that would arise if the Kelvin-Voigt damping were replaced with an $m$-Laplacian term $\Delta_m u_t$, $m>2$. In that case, the simultaneous identification of two weak limits, one for the $p$-Laplacian of $u$ and the other for the $m$-Laplacian of $u_t$ (even if $m=p$) cannot be carried out by the same approach.  It had been assumed in some previous works that the Galerkin approach might trivially extend to the $m$-$p$ model, for instance in \cite{BM2} which attempts to rely on \cite{Bia:95:NA}  and \cite{nak-nan:75} that deal with a \emph{single} $p$-Laplace operator in the equation.
 That is not the case, however, and rigorous analysis of well-posedness for $p$-Laplacian/$m$-Laplacian (with $m,\, p >2$) second-order equation is presently missing from the literature, remaining a challenging open problem.

The literature is quite rich in results on (monotonic) wave equations and systems of wave equations. We mention here the pioneering paper by Lions and Strauss \cite{LSt}, and the important work  by  Glassey 
\cite{G} and Levine \cite{l1}.   Also,  we would like to mention the seminal paper \cite{GT} by Georgiev and Todorova which ignited  immense interest in wave equations influenced by damping and source terms. Subsequent works can be found in \cite{V1} by Vitillaro, and in \cite{CCL} by Cavalcanti et al. In addition to these references, we would like to mention the recent breakthrough papers \cite{BL3,BL2,BL1}  by Bociu and Lasiecka in which they introduced an elegant strategy that deals with supercritical sources. Subsequently, this strategy has been utilized in many recent papers, we mention here \cite{GRSTT,PRT-p-Laplacain,RTW,RW}.  For systems of wave equations in bounded domains, we refer the reader to the papers \cite{AR2,GR1,GR2,GR}.

\subsection{Notation}
Throughout the paper the following notational conventions for $L^p$ space norms and inner products will be used, respectively:
\begin{align*}
&||u||_s=||u||_{\L[s]}, &&|u|_s=||u||_{\Lb[s]};\\
&(u,v)_\Omega = (u,v)_{\L[2]},&&(u,v)_\Gamma = (u,v)_{\Lb[2]}.
\end{align*}
In the interests of clarity, no notational distinction shall be made between a function $u\in\W$ and its trace, typically denoted $\gamma u$, as an element of an appropriate space of functions on $\Gamma$.  It shall be understood that the expression $f(u)$ involving the source term $f$ is to be interpreted as $f(\gamma u)$. 
\\ As is customary, $C$ shall always denote a positive constant which may change from line to line.  Following from the Poincar\'{e}-Wirtinger type inequality
\[
||u||_p^p \leq C(||\grad u||_p^p +|u|_p^p)\text{ for all }u\in \W
\]
we may choose as a matter of convenience
\[
||u||_{1,p}=\left(||\grad u||_p^p +|u|_p^p\right)^{1/p}
\]
as a norm on $\W$ equivalent to the standard norm.\\
For a Banach space $X$, we denote the duality pairing between the dual space $X'$ and $X$ by $\langle \cdot,\cdot \rangle_{X',X}$. That is, 
\begin{align*}
\langle \psi,x \rangle_{X',X} =\psi(x)\text{ for }x\in X,\, \psi\in X'.
\end{align*}
In particular, the duality pairing between $(\W)'$ and $\W$ shall be denoted $\langle \cdot,\cdot\rangle_p$.

By imposing the Robin-type boundary condition $|\grad u|^{p-2}\partial_\nu u + |u|^{p-2}u=0$ on $\Gamma$ the $p$-Laplacian given at the onset of the paper extends readily to a maximal monotone operator from $\W$ into its dual, $(\W)'$, with action given by: 
\begin{align}\label{plapalce}
\langle -\Delta_p u, \phi\rangle_p = \int_\Omega |\grad u|^{p-2}\grad u\cdot\grad\phi\,dx + \int_\Gamma |u|^{p-2}u\phi\,dS,\quad u,\phi\in\W.
\end{align}
Further, it is convenient to record the bound
\begin{align}\label{plaplace-opnorm}
	||-\Delta_p u||_{(\W)'}\leq 2||u||_{1,p}^{p-1}
\end{align}
on the operator norm of $-\Delta_p u$ which follows easily from H\"older's inequality.  As the Laplacian occurs as a term in equation \eqref{wave}  providing damping, it is efficient to utilize all of the preceding notation formally including the case of $p=2$.  Throughout the paper however, we shall always assume $2<p<3$.   Additionally, the Sobolev embedding (in 3D)
\begin{align*}
\W \into \Lb[\frac{2p}{3-p}]
\end{align*}
as well as the inequalities associated with the trace operator $\gamma$ in the map 
\begin{align*}
\W[1-\epsilon,p]\tinto \Lb[\frac{2p}{3-(1-\epsilon)p}]\into\Lb[4]
\end{align*}
for sufficiently small $\epsilon\geq 0$, will be used frequently. (See, e.g., \cite{ADAMS}).
 As it occurs so frequently we shall pass to subsequences consistently without re-indexing.
\begin{rmk}\label{rmk:fbound}
	As the bound will be used often throughout the paper it is worthy of note that the assumptions on $f$ imply that $|f(u)|\leq C(|u|^r+1), \,\, u\in \reals$.
\end{rmk}

\subsection{Main results and strategies}
A suitable weak formulation of \eqref{wave} is as follows:
\begin{defn}\label{def:weaksln}  A function $u$ is said to be a weak solution of \eqref{wave} on the interval $[0,T]$ provided:
	\begin{enumerate}[(i)]
		\setlength{\itemsep}{5pt}
		\item\label{def-a} $u\in C_w([0,T];\W)$,
		\item\label{def-b} $u_t\in L^2(0,T;\W[1,2])\cap C_w([0,T];\L[2]),$
		\item\label{def-c} $(u(0),u_t(0))=(u_0,u_1)$ in $\W \times \L[2]$,
		\item\label{def-d} and for all $t\in[0,T]$ the function $u$ verifies the identity
			\rev\begin{multline}\label{slnid}
				(u_t(t),\phi(t))_\O - (u_1,\phi(0))_\O - \int_0^t (u_t(\tau),\phi_t(\tau))_\O\,d\tau 
				+ \int_0^t \langle -\Delta_p u(\tau),\phi(\tau)\rangle_p\,d\tau \\
				+ \int_0^t \langle -\Delta_2 u_t(\tau),\phi(\tau)\rangle_2\,d\tau 
				= \int_0^t \int_\G f(u(\tau))\phi(\tau)\,dSd\tau 
			\end{multline}		
	\end{enumerate}
for all test functions $\phi\in C_w([0,T];\W)$ with $\phi_t\in L^2(0,T;\W[1,2])$.		
\end{defn}
\begin{rmk}
	In Definition~\ref{def:weaksln} above, $C_w([0,T]; X)$ denotes the space of weakly continuous (often called scalarly continuous) functions from $[0,T]$ into a Banach space $X$.  That is, for each $u\in C_w([0,T];X)$ and  $f \in X'$ the map $t\mapsto \langle f, u(t) \rangle_{X',X}$ is continuous on $[0,T]$.
\end{rmk}
The principal result is the existence of local solutions of problem \eqref{wave} in the following sense.
\begin{thm}[\bf Local solutions]\label{thm:exist}
	Under the stated assumptions, problem \eqref{wave} possesses a local weak solution, $u$, in the sense of Definition~\ref{def:weaksln} on a non-degenerate interval $[0,T]$ with length dependent only upon the initial data, $(u_0,u_1)$, and the local Lipschitz constant of $f$ as a map from $\W$ into $\Lb[4/3]$. Further,  this solution $u$ satisfies the energy inequality
	 \begin{align}\label{energy-2ndid}
		\E(t) + \int_0^t ||u'(\tau)||_{1,2}^2\,d\tau \leq\E(0) + \int_0^t \int_\Gamma f(u(\tau))u'(\tau)\,dSd\tau
		\end{align}
		where $\E(t)=\frac{1}{2}||u'(t)||_2^2 + \frac{1}{p}||u(t)||_{1,p}^p$.
	 Equivalently, \eqref{energy-2ndid} can also be written as
	 \begin{align}\label{energy-1stid}
			E(t) + \int_0^t ||u'(\tau)||_{1,2}^2\,d\tau \leq E(0)
		\end{align}
		with $E(t)=\frac{1}{2}||u'(t)||_2^2 + \frac{1}{p}||u(t)||_{1,p}^p -\int_{\Gamma} F(u(t))\,dS$
	by taking  $F$ as the primitive of $f$, i.e.  $F(u)=\int_0^u f(s)\,ds$.	
	 \end{thm}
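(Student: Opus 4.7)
The plan is to prove Theorem~\ref{thm:exist} by a Faedo--Galerkin approximation scheme, following the strategy alluded to in the introduction: obtain uniform bounds from an energy identity at the approximate level, pass to weak/weak-$\ast$ limits, and identify the weak limit of the nonlinear $p$-Laplacian term via a Minty-type monotonicity argument. The local Lipschitz property of the Nemytskii map $f:\W\to\Lb[4/3]$ will be used both to close the nonlinear estimates and to determine the length of the existence interval.

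First, I would select a basis $\{w_j\}_{j\ge 1}\subset \W$ whose linear span is dense in both $\W$ and $\W[1,2]$, set $V_N=\spn\{w_1,\dots,w_N\}$, and seek $u_N(t)=\sum_{j=1}^N c_j^N(t)w_j$ satisfying
\begin{align*}
(u_N''(t),w_j)_\O + \langle -\D_p u_N(t),w_j\rangle_p + \langle -\D u_N'(t),w_j\rangle_2 = (f(u_N(t)),w_j)_\G, \quad j=1,\dots,N,
\end{align*}
with $(u_N(0),u_N'(0))\to(u_0,u_1)$ in $\W\times \L[2]$. Since $f$ induces a locally Lipschitz Nemytskii map into $\Lb[4/3]$ by \eqref{ass:f} and the trace embedding $\W\into\Lb[2p/(3-p)]$, the coefficient system is a locally Lipschitz ODE in $\R^N$, and Picard's theorem yields $u_N\in C^2([0,T_N];V_N)$. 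Testing against $u_N'$ produces the energy identity
\begin{align*}
\E(u_N)(t) + \int_0^t \|u_N'(\tau)\|_{1,2}^2\,d\tau = \E(u_N)(0) + \int_0^t \int_\G f(u_N(\tau))u_N'(\tau)\,dS\,d\tau.
\end{align*}
Using H\"older, the bound $|f(u)|\le C(|u|^r+1)$, and the compact trace $\W[1-\e,p]\tinto\Lb[4]$, the right side can be bounded in terms of $\E(u_N)$ on a short interval $[0,T]$ whose length depends only on $\|(u_0,u_1)\|_{\W\times\L[2]}$ and the local Lipschitz constant of $f:\W\to\Lb[4/3]$. A Gr\"onwall argument gives uniform-in-$N$ bounds $u_N\in L^\infty(0,T;\W)$, $u_N'\in L^\infty(0,T;\L[2])\cap L^2(0,T;\W[1,2])$, and extends $T_N$ to the common time $T$.

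Passing to weakly and weakly-$\ast$ convergent subsequences produces limits $u,u'$ in the appropriate spaces. Aubin--Lions compactness together with the compactness of the trace yields strong convergence $u_N\to u$ in $L^2(0,T;\Lb[4])$, which together with the subcritical character (on $\G$) of $r<4p/(3(3-p))$ allows one to pass to the limit in $\int_0^t\int_\G f(u_N)\phi\,dS\,d\tau$. The main obstacle is the identification of the weak limit of $-\D_p u_N$: by \eqref{plaplace-opnorm} and weak compactness, $-\D_p u_N\rightharpoonup \chi$ in $L^{p'}(0,T;(\W)')$ for some $\chi$, and one must show $\chi=-\D_p u$. Here I would combine the approximate energy identity with the strong convergences established above and the weak lower semicontinuity of the remaining quadratic terms to deduce
\begin{align*}
\limsup_{N\to\infty}\int_0^T\langle -\D_p u_N,u_N\rangle_p\,d\tau \le \int_0^T\langle \chi,u\rangle_p\,d\tau,
\end{align*}
and then invoke Minty's trick: monotonicity of $-\D_p$ gives $\int_0^T\langle -\D_p u_N+\D_p v,u_N-v\rangle_p\,d\tau\ge 0$ for every $v\in L^p(0,T;\W)$, and sending $N\to\infty$, choosing $v=u\pm\lambda\psi$, and letting $\lambda\to 0^+$ identifies $\chi=-\D_p u$. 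The analogous identification for $-\D u_N'$ is automatic by linearity.

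Finally, weak continuity of $(u,u')$ in time and recovery of the initial data follow from standard arguments \`a la Lions--Strauss \cite{LSt}, so that (i)--(iii) of Definition~\ref{def:weaksln} hold. The energy inequality \eqref{energy-2ndid} is then obtained by passing to the limit in the approximate energy identity via weak lower semicontinuity of the $\L[2]$ and $\W$ norms on the left, while the right side converges by the strong trace convergence of $u_N$ and the subcritical structure of $f$. The equivalent form \eqref{energy-1stid} follows because $\int_0^t\int_\G f(u(\tau))u'(\tau)\,dS\,d\tau=\int_\G F(u(t))\,dS-\int_\G F(u_0)\,dS$, justified by a density argument using the regularity $u\in C_w([0,T];\W)$ and $u'\in L^2(0,T;\W[1,2])$ together with the subcritical growth of $f$ on the boundary.
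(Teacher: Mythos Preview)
Your outline is essentially correct and captures the Galerkin/compactness/Minty structure that the paper also uses, but the overall \emph{route} differs from the paper's in one important respect, and one step of your sketch is under-specified.

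\textbf{Where the routes diverge.} You run the Galerkin scheme directly with the general source $f$ satisfying \eqref{ass:f}. The paper does not: it proceeds in three stages. In Section~\ref{S2} the Galerkin method is carried out only under the extra hypothesis that $f:\W\to\Lb[2]$ is \emph{globally} Lipschitz, which makes the a~priori estimate a clean linear Gronwall (Proposition~\ref{prop:apriori}). In Section~\ref{S3.1} the source is truncated to $f_K(u)=f(Ku/\max\{K,\|u\|_{1,p}\})$, producing a globally Lipschitz map; the resulting solution is then shown to satisfy $\|u\|_{1,p}\le K$ on a short interval whose length depends only on the local $\Lb[4/3]$-Lipschitz constant of $f$, so $f_K=f$ there. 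Finally, Section~\ref{S3.2} uses a second truncation $f_n=\eta_n f$ to reach the full supercritical range, passing to the limit via a second compactness/Minty argument mirroring Section~\ref{S2}. Your direct approach is more economical; the paper's layered approach is more modular and isolates the supercritical difficulty in the truncation limits rather than in the Galerkin a~priori bound.

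\textbf{Where your sketch needs care.} The phrase ``the right side can be bounded in terms of $\E(u_N)$ \dots\ A Gr\"onwall argument gives uniform-in-$N$ bounds'' hides the main issue. For $r>p/2$ (which is allowed by \eqref{ass:f}) the source estimate gives $|f(u_N)|_{4/3}^2\le C(1+\|u_N\|_{1,p}^{2r})$ with $2r>p$, so the resulting inequality is $\E_N'(t)\le C(1+\E_N(t)^{2r/p})$, which is \emph{superlinear} in $\E_N$; a linear Gronwall does not close. You need either a nonlinear comparison argument or, equivalently, a bootstrap: fix $K$ from the data, use continuity to keep $\|u_N(t)\|_{1,p}\le K$ on a maximal interval, bound the source by the local Lipschitz constant $C_K$ of $f:\W\to\Lb[4/3]$ there, and then show the resulting energy bound forces $\|u_N\|_{1,p}\le K$ on a fixed interval $[0,T_0]$ depending only on $\E(0)$ and $C_K$. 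This is exactly what the paper does in Proposition~\ref{prop:ll}, but \emph{after} the Galerkin step, on the already-constructed solution of the truncated problem. Once this point is made explicit, your direct argument goes through; the identification $\chi=-\Delta_p u$ via Minty and the passage to the limit in the boundary source are handled in the paper by Propositions~\ref{prop:limlaplace} and~\ref{prop:limf} in essentially the way you describe.
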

\begin{rmk}
Note that no claims of uniqueness are made here.
\end{rmk}

Our next Theorem states that the weak solution described by Theorem \ref{thm:exist} can be extended globally in time provided the source exponent is at most $p/2$.

\begin{thm} [\bf Global solutions]\label{t-2}In addition to the assumptions of Theorem \ref{thm:exist}  assume that $r \leq p/2$.
Then, the  weak solution $u$ furnished by Theorem \ref{thm:exist} is a global solution and the existence time $T$ may be taken arbitrarily large.
\end{thm}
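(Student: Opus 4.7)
The plan is a standard continuation argument. Since the local existence time furnished by Theorem~\ref{thm:exist} depends only on $\|(u_0, u_1)\|_{\W\times\L[2]}$ and on the local Lipschitz constant of $f$ on a corresponding ball, it suffices to establish an \emph{a priori} bound on $\|u(t)\|_{1,p}^p + \|u_t(t)\|_2^2$ that is uniform on the maximal existence interval $[0, T_{\max})$. Granted such a bound, Theorem~\ref{thm:exist} can be reapplied at each restart time with an existence increment bounded below by a universal $T_0 > 0$, and iteration delivers a solution on any prescribed $[0, T]$.

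To obtain the \emph{a priori} bound I would work directly with the energy inequality \eqref{energy-1stid}. The remaining task is to absorb the boundary term $\int_\G F(u(t))\,dS$ appearing in $E(t)$ into the positive $\W$-part of the energy. This is precisely where the hypothesis $r \leq p/2$ enters: combined with $p > 2$, it yields the strict inequality $r + 1 \leq \tfrac{p}{2} + 1 < p$. By Remark~\ref{rmk:fbound}, the primitive $F$ satisfies $|F(u)| \leq C(|u|^{r+1} + 1)$, so using the trace embedding $\W \into \Lb[p]$, H\"older's inequality on the compact boundary $\G$, and Young's inequality yields, for any $\e > 0$,
\[
\int_\G |F(u(t))|\,dS \;\leq\; C\bigl(|u(t)|_{r+1}^{r+1} + 1\bigr) \;\leq\; \e\,\|u(t)\|_{1,p}^p + C_\e.
\]

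Choosing $\e = 1/(2p)$ and inserting into \eqref{energy-1stid} gives the uniform estimate
\[
\tfrac{1}{2}\|u_t(t)\|_2^2 + \tfrac{1}{2p}\|u(t)\|_{1,p}^p \;\leq\; E(0) + C \quad \text{on } [0, T_{\max}).
\]
Moreover, the growth bound $|f'(u)| \leq C(|u|^{r-1} + 1)$ combined with the trace/Sobolev embeddings makes the local Lipschitz constant of $f\colon \W \to \Lb[4/3]$ a continuous function of the $\W$-radius, so this constant also remains bounded along the trajectory, validating the continuation step. The essential (and essentially only) subtlety is the boundary-term absorption, which the restriction $r \leq p/2$ is tailored to make routine; without it, $|u|_{r+1}^{r+1}$ could not be controlled by $\|u\|_{1,p}^p$ plus a constant, and one would instead be forced into a more delicate conditional blow-up analysis.
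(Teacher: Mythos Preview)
Your argument is correct, but it follows a genuinely different route from the paper's. The paper works with the energy inequality \eqref{energy-2ndid}, estimates the boundary term $\int_\Gamma f(u)u_t\,dS$ via H\"older and Young (using $|f(u)|_2\leq C(1+|u|_{2r}^r)$ and splitting off $\tfrac12\|u_t\|_{1,2}^2$ to be absorbed by the dissipation integral), then invokes $|u|_{2r}^{2r}\leq C(1+\|u\|_{1,p}^p)$ from $2r\leq p$ and applies Gronwall to obtain a bound on $\E(t)$ that grows (exponentially) in $t$ but is finite on any $[0,T]$. You instead work with \eqref{energy-1stid} and absorb the primitive term $\int_\Gamma F(u)\,dS$ directly into $\|u\|_{1,p}^p$ via Young's inequality, exploiting $r+1<p$. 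Your path is slightly more elementary---no Gronwall is needed and the resulting bound is uniform in time---while the paper's approach uses only the ``differentiated'' form of the energy inequality and would adapt more readily to settings where an inequality for the primitive $F$ is unavailable. Both routes use the hypothesis $r\leq p/2$ in essentially the same way, to guarantee that a boundary norm of $u$ of subcritical order can be dominated by the $\W$-energy.
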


The proof of Theorem \ref{thm:exist} is accomplished in several steps similar to \cite{GR, PRT-p-Laplacain,RW}.   In Section~\ref{S2} we construct a solution satisfying Theorem~\ref{thm:exist} using  Galerkin approximations under the added assumption that $f:\W \to \Lb[2]$ is globally Lipschitz continuous.    This permits us to focus on the recovery of the weak limit of the terms due to the $p$-Laplacian in a case where the behavior of $f$ is relatively benign.  

In Section~\ref{S3} we extend the results first to sources $f:\W \to \Lb[2]$ which are locally Lipschitz  by a standard  truncation argument similar to \cite{CEL1, KL} while ensuring that the interval of existence for these solutions depends only upon the local Lipschitz constants of $f$ as a map into $\Lb[4/3]$.  A further truncation argument  permits the full range exponents on $f$ prescribed in \eqref{ass:f} by constructing a sequence of approximated solutions whose truncated sources, $f_n$, have uniformly bounded local Lipschitz constants as maps into $\Lb[4/3]$.

Finally, in Section~\ref{S4} we  provide the proof of Theorem~\ref{t-2}  by obtaining the appropriate bounds on $u$ and $u_t$ and appealing to a standard continuation procedure.

\subsection{Preliminaries}

\sloppy While the inequalities associated with the  trace mapping $\W \tinto \Lb[2]$ ensures that $f:\W \to \Lb[2]$ is locally Lipschitz for $1\leq r< p/(3-p)$, a stronger result provided by the following lemma will be used frequently. Its chief utility is in conjunction with the compactness results from Section~\ref{S2}.

\begin{lem}\label{lem:f-lipshitz}
	Under the assumptions given in \eqref{ass:f}, the function $f:\W[1-\epsilon,p]\to \Lb[4/3]$ is locally Lipschitz continuous for sufficiently small $\epsilon\geq 0$.
\end{lem}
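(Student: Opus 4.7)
The strategy is a standard reduction to a pointwise Lipschitz bound followed by H\"older's inequality, with the H\"older exponents tuned to the trace embedding $\W[1-\epsilon,p]\into\Lb[q_\epsilon]$, where $q_\epsilon:=\frac{2p}{3-(1-\epsilon)p}$.

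First, the bound on $f'$ and the mean value theorem produce the pointwise estimate
\[
|f(u)-f(v)|\leq C\bigl(|u|^{r-1}+|v|^{r-1}+1\bigr)|u-v|,\qquad u,v\in\reals.
\]
Raising to the power $4/3$, integrating over $\G$, and applying H\"older's inequality with conjugate exponents $\theta,\theta'$ yields
\[
|f(u)-f(v)|_{4/3}\leq C\,\bigl\||u|^{r-1}+|v|^{r-1}+1\bigr\|_{\Lb[4\theta/3]}\,|u-v|_{4\theta'/3}.
\]

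The crux is to choose $\theta,\theta'$ so that both factors on the right are controlled via the trace embedding. Setting $4\theta'/3=q_\epsilon$ and using $1/\theta+1/\theta'=1$, a short computation reduces the joint requirement $(r-1)\cdot 4\theta/3\leq q_\epsilon$ and $4\theta'/3\leq q_\epsilon$ to the single consistency condition $r\leq \tfrac{3}{4}q_\epsilon$. At $\epsilon=0$ this reads $r\leq \frac{3p}{2(3-p)}$, and since $\tfrac{4}{3}<\tfrac{3}{2}$, the hypothesis $r<\frac{4p}{3(3-p)}$ delivers this with strict inequality. Continuity of $q_\epsilon$ in $\epsilon$ then ensures that the strict inequality persists for all sufficiently small $\epsilon\geq 0$. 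Once such $\epsilon$ is fixed, the trace embedding converts each $\Lb[q_\epsilon]$ factor into a $\W[1-\epsilon,p]$ norm, giving
\[
|f(u)-f(v)|_{4/3}\leq C\bigl(\|u\|_{\W[1-\epsilon,p]}^{r-1}+\|v\|_{\W[1-\epsilon,p]}^{r-1}+1\bigr)\|u-v\|_{\W[1-\epsilon,p]},
\]
and restricting $u,v$ to a bounded ball in $\W[1-\epsilon,p]$ yields the desired local Lipschitz bound.

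The main obstacle is precisely this exponent bookkeeping: the factor $\tfrac{4}{3}$ is forced by the target space $\Lb[4/3]$, while the gap between $\tfrac{4}{3}$ and $\tfrac{3}{2}$ provides the slack needed to absorb a small positive $\epsilon$ while still accommodating the full range of $r$ permitted by \eqref{ass:f}.
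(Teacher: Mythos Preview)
Your proof is correct and follows essentially the same approach as the paper's own argument: the paper introduces $\alpha = \tfrac{3}{4}q_\epsilon$ and applies H\"older with the conjugate pair $\alpha,\ \alpha/(\alpha-1)$, which are precisely your $\theta',\ \theta$, and then reduces everything to the same consistency condition $r<\alpha=\tfrac{3}{4}q_\epsilon$ before invoking the trace embedding $\W[1-\epsilon,p]\tinto\Lb[q_\epsilon]$.
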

\begin{proof}
	Selecting $R>0$ it is enough to find a constant $C_R$ so that 
	\begin{align*}
	|f(u)-f(v)|_{4/3}^{4/3} \leq C_R||u-v||_{1-\epsilon,p}^{4/3}
	\end{align*}
	for all $u,v\in\W[1-\epsilon,p]$ with $||u||_{1-\epsilon,p},||v||_{1-\epsilon,p}\leq R$.
	By the mean value theorem applied to $f$ we may produce the bound
	\begin{align*}
	|f(u)-f(v)| &\leq |f'(\xi_{u,v})||u-v| \\
	&\leq C(|u|+|v|+1)^{r-1}|u-v|.
	\end{align*}
	Select $\alpha=\frac{3}{4}\frac{2p}{3-(1-\epsilon)p}$ with an eye towards the  trace mapping   $\W[1-\epsilon,p]\tinto \Lb[4\alpha/3]$. Since $\alpha$ increases to $\frac{3p}{2(3-p)}$ as $\epsilon$ decreases towards zero we may further select $\epsilon$ sufficiently small so that $r<\alpha$ following from \eqref{ass:f}.  This choice of $\epsilon$ implies that $\frac{r-1}{\alpha-1}<1$ and thus $\frac{4\alpha(r-1)}{3(\alpha -1)}<\frac{4}{3}\alpha$, whereupon utilizing H\"{o}lder's inequality we obtain, as desired,    
	\begin{align*}
	|f(u)-f(v)|_{4/3}^{4/3} &\leq C|u+v+1|_{4\alpha(r-1)/3(\alpha-1)}^{(4r-4)/3}|u-v|_{4\alpha /3}^{4/3}\\
	&\leq C|u+v+1|_{4\alpha /3}^{(4r-4)/3}|u-v|_{4\alpha /3}^{4/3}\\
	&\leq C(||u||_{1-\epsilon,p}^{(4r-4)/3} + ||v||_{1-\epsilon,p}^{(4r-4)/3} + 1)||u-v||_{1-\epsilon,p}^{4/3} \\
	&\leq C_R ||u-v||_{1-\epsilon,p}^{4/3}
	\end{align*}
	with $C_R=C(2R^{(4r-4)/3}+1)$.
\end{proof}

\section{Solutions for globally Lipschitz sources}\label{S2}
Our strategy is to employ a suitable Galerkin approximation to show the local existence of weak solutions of \eqref{wave} satisfying the conditions of Theorem~\ref{thm:exist} in the case where $f:\W \to \Lb[2]$ is globally Lipschitz.

\subsection{Approximate solutions}\label{S2.1}
To start, let $\{w_j\}_1^\infty$ be a basis for $\W$ suitably normalized so as to form an orthonormal basis for $\L[2]$. (Such a basis may be constructed from, for instance, eigenfunctions of the Laplacian $\mathscr{L}=-\Delta$ viewed as an unbounded, positive, self-adjoint operator with domain $\mathscr{D}(\mathscr{L})=\{w\in H^2(\Omega):\partial_\nu w+w=0\text{ on }\Gamma\}$, whose inverse is a compact operator.)  We now seek to construct a sequence of approximate solutions in the form 
\begin{align}\label{approxform}
u_N(x,t)=\sum_{j=1}^N u_{N,j}(t)w_j(x) 
\end{align} 
on each of the finite dimensional subspaces $V_N=\spn\{w_1,\ldots,w_N\}$ of $\W$.\\
As the collection of functions $\{w_j\}_1^\infty$ forms a basis both for $\W$ and $\L[2]$, we may find corresponding to each $N$ sequences of scalars $\{u^0_{N,j}\}_{N,j=1}^\infty$ and $\{u^1_j=(u_1,w_j)_\Omega\}_1^\infty$ such that 
\begin{subequations}\label{approxIC}\begin{alignat}{5}
&\sum_{j=1}^N u^0_{N,j} w_j&\to u_0&\text{ strongly in }&&\W, \\
\mathscr{P}_Nu_1 =&\sum_{j=1}^N u^1_j w_j&\to u_1&\text{ strongly in }&&\L[2]
\end{alignat}\end{subequations}
for given initial data $(u_0,u_1)\in\W\times\L[2]$, where $\mathscr{P}_N:L^2(\O) \rightarrow V_N$ denotes orthogonal projection onto $V_N$.  Now, for each $N=1,2,\ldots$ the system of ordinary differential equations:
\begin{subequations}
\begin{multline}\label{approx1}
(u_N'',w_j)_\Omega +(|\grad u_N|^{p-2}\grad u_N,\grad w_j)_\Omega + (|u_N|^{p-2}u_N,w_j)_\Gamma \\\rev
 +(\grad u_N',\grad w_j)_\Omega + (u_N',w_j)_\Gamma = (f(u_N),w_j)_\Gamma
\end{multline} 
indexed by $j=1,\ldots,N$ with initial conditions 
\begin{align}\label{approx2}\rev
u_{N,j}(0)=u^0_{N,j},\,\,\,u'_{N,j}(0)=u^1_j
\end{align}
\end{subequations}
is an initial value problem for a second order $N\times N$ system of ordinary differential equations with continuous nonlinearities in the unknown functions $u_{N,j}$ and their time derivatives. Therefore, it follows from the Cauchy-Peano theorem that for every $N\geq 1$, \eqref{approx1}--\eqref{approx2} has a solution $u_{N,j}\in C^2([0,T_N])$, $j=1,\ldots N$, for some $T_N>0$.
\subsection{A priori estimates} We next demonstrate that each of the approximate solutions $u_N$ exists on a non-degenerate interval $[0,T]$ independent of $N$. 
\begin{prop}\label{prop:apriori} Each approximate solution $u_N$ exists on $[0,\infty)$.
Further, for any $T>0$ the sequence of approximate solutions $\{u_N\}_1^\infty$ satisfies 
	\begin{subequations}\begin{align}
	\{u_N\}_1^\infty&\text{ is a bounded sequence in }L^{\infty} (0,T;\W), \label{apriori-a}\\
	\{u_N'\}_1^\infty&\text{ is a bounded sequence in }L^{\infty} (0,T;\L[2]),\label{apriori-b}\\
	\{u_N'\}_1^\infty&\text{ is a bounded sequence in }L^2(0,T;\H[1]),\label{apriori-c}\\
	\{u_N''\}_1^\infty&\text{ is a bounded sequence in }L^2(0,T;(\W)').\label{apriori-d}
	\end{align}\end{subequations}
\end{prop}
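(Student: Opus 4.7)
The plan is to derive the standard energy identity by multiplying the $j$th equation of \eqref{approx1} by $u_{N,j}'(t)$ and summing over $j=1,\ldots,N$, and then invoking Gronwall. Since $\frac{d}{dt}\bigl(\tfrac1p|\grad u_N|^p\bigr) = |\grad u_N|^{p-2}\grad u_N \cdot \grad u_N'$ pointwise (and analogously for the boundary term $\tfrac1p|u_N|^p$), the summation reduces to
\begin{align*}
\tfrac{d}{dt}\E_N(t) + \|u_N'(t)\|_{1,2}^2 = \int_\G f(u_N)\,u_N'\,dS, \qquad \E_N(t) := \tfrac12\|u_N'\|_2^2 + \tfrac1p\|u_N\|_{1,p}^p.
\end{align*}

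Using the global Lipschitz hypothesis $|f(u_N)|_2 \leq C(1+\|u_N\|_{1,p})$, the trace embedding $\H \hookrightarrow \Lb[2]$, and Young's inequality, I would bound the right-hand side by $\tfrac12\|u_N'\|_{1,2}^2 + C(1+\|u_N\|_{1,p}^2)$. Because $p>2$, the elementary inequality $\|u_N\|_{1,p}^2 \leq 1+\|u_N\|_{1,p}^p \leq 1+p\,\E_N$ holds. Absorbing the damping term into the left and integrating on $[0,t]$ yields
\begin{align*}
\E_N(t) + \tfrac12\int_0^t \|u_N'(\tau)\|_{1,2}^2\,d\tau \leq \E_N(0) + C\int_0^t \bigl(1+\E_N(\tau)\bigr)\,d\tau.
\end{align*}
The convergence in \eqref{approxIC} makes $\E_N(0)$ uniformly bounded in $N$, so Gronwall produces an $N$-uniform bound on every finite $[0,T]$. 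This a priori estimate precludes blow-up of the polynomial ODE system, so each $u_N$ extends to $[0,\infty)$, and the bound directly supplies \eqref{apriori-a}, \eqref{apriori-b}, and \eqref{apriori-c}.

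For \eqref{apriori-d}, I would view $u_N''(t)\in V_N\subset\L[2]\subset(\W)'$ and read off its norm from the equation. For $\phi\in V_N$,
\begin{align*}
(u_N'',\phi)_\Omega = -\langle -\Delta_p u_N,\phi\rangle_p - \langle -\Delta_2 u_N',\phi\rangle_2 + (f(u_N),\phi)_\Gamma.
\end{align*}
The operator-norm bound \eqref{plaplace-opnorm} with \eqref{apriori-a} gives $\|-\Delta_p u_N\|_{(\W)'}\leq 2\|u_N\|_{1,p}^{p-1}\in L^\infty(0,T)$; the embedding $\W\hookrightarrow\H$ (valid since $p>2$ on the bounded domain $\Omega$) together with \eqref{apriori-c} yields $\|-\Delta_2 u_N'\|_{(\W)'}\leq C\|u_N'\|_{1,2}\in L^2(0,T)$; and the trace embedding with the global Lipschitz bound on $f$ controls the source term by $C\|\phi\|_{1,p}$, uniformly in $t$. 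The main technical point, and the chief obstacle, is extending these bounds from $\phi\in V_N$ to arbitrary $\phi\in\W$: one writes $(u_N'',\phi)_\Omega = (u_N'',\mathscr{P}_N\phi)_\Omega$ (since $u_N''\in V_N$) and needs the $N$-uniform $\W$-stability of the $L^2$-projection $\mathscr{P}_N$, which is delivered by the smoothness of the Laplacian-eigenfunction basis together with standard spectral-regularity estimates. Combining the three individual bounds then produces the required $L^2(0,T;(\W)')$ estimate on $u_N''$.
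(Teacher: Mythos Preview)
Your argument matches the paper's: the energy identity, source estimate via the global Lipschitz bound $|f(u_N)|_2\leq C(1+\|u_N\|_{1,p})$, and Gronwall step for \eqref{apriori-a}--\eqref{apriori-c} are identical, and for \eqref{apriori-d} the paper approximates an arbitrary $\phi\in\W$ by finite sums $\phi_j=\sum_{i\leq j}a_i^jw_i$ and reads off the same H\"older estimates from \eqref{approx1}, which is equivalent to your projection formulation. The $\W$-stability of $\mathscr{P}_N$ that you flag as the chief obstacle is implicitly relied upon (when $j>N$) but not explicitly justified in the paper either.
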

\begin{proof}
	Multiplying equation \eqref{approx1} by $u'_{N,j}$ and summing over $j=1,\ldots,N$ we obtain the relation 
\begin{align}\label{apriori-1}
\frac{1}{2}\frac {d}{dt}||u_N'(\tau)||_2^2 + \frac{1}{p}\frac{d}{dt}||u_N(\tau)||_{1,p}^p + ||u_N'(\tau)||_{1,2}^2 = \int_\Gamma f(u_N)u_N'\,dS
\end{align}
for each $\tau\in[0,T_N]$. 

Integrating \eqref{apriori-1} over $\tau\in[0,t]$,  we obtain
\begin{align}\label{apriori-2}
\frac{1}{2}||u_N'(t)||_2^2 &+ \frac{1}{p}||u_N(t)||_{1,p}^p + \int_0^t||u_N'(\tau)||_{1,2}^2\,d\tau \nonumber\\
&=  \int_0^t\int_\Gamma f(u_N)u_N'\,dSd\tau 
+ \frac{1}{2}||u_N'(0)||_2^2 + \frac{1}{p}||u_N(0)||_{1,p}^p 
\ptag{apriori-1}
\end{align}
at each $t\in[0,T_N]$. From this, it is natural to define $\epsilon_N$ by 
\begin{align*}
\epsilon_N(t)=\frac{1}{2}||u_N'(t)||_2^2 + \frac{1}{p}||u_N(t)||_{1,p}^p
\end{align*}
so that for all $t\in[0,T_N]$ we may express \eqref{apriori-2} equivalently as  
\begin{align}\label{apriori-3}
\epsilon_N(t) + \int_0^t ||u_N'(\tau)||_{1,2}^2\,d\tau = \int_0^t\int_\Gamma f(u_N(\tau))u_N'(\tau)\,dSd\tau+\epsilon_N(0).\ptag{apriori-2}
\end{align}

In order to produce a suitable bound on $\epsilon_N$ we first address the term due to the source.  Under the assumption that $f:\W \to \Lb[2]$ is globally Lipschitz we have the bound $|f(u_N)|_2 \leq C(1+||u_N||_{1,p})$ with, say, $C$ greater than $|f(0)|_2$ and the Lipschitz constant of $f$. Utilizing H\"older and Young's inequalities, 
\begin{align}\label{apriori-3.5}
\int_\Gamma f(u_N)u_N'\,dS &\leq C_\epsilon|f(u_N)|_2^2 + \epsilon|u_N'|_2^2\nonumber \\
&\leq C(1+||u_N||_{1,p}^2) + \frac{1}{2}||u_N'||_{1,2}^2
\end{align}
 for an appropriate choice of $\epsilon$. Since $1+||u_N(\tau)||_{1,p}^2\leq 2 +||u_N(\tau)||_{1,p}^p$ we may apply the bound in \eqref{apriori-3.5} to equation \eqref{apriori-3} so that  
\begin{align}\label{apriori-4}
\epsilon_N(t)+\frac{1}{2}\int_0^t||u_N'(\tau)||_{1,2}^2\,d\tau &\leq C\int_0^t \left( 1+ \epsilon_N(\tau) \right)\,d\tau + \epsilon_N(0).
\end{align}
In particular, this means that each $\epsilon_N$ satisfies the integral inequality 
\begin{align*}
\epsilon_N(t)\leq C\int_0^t \left( 1+ \epsilon_N(\tau) \right)\,d\tau + C',
\end{align*} 
with constants $C,\,C'$ depending only upon the Lipschitz constant of $f$, $p$, and the values of $||u_0||_{1,p}$ and $||u_1||_2$.

 Using Gronwall's inequality we may thus for any $T>0$ bound each $\epsilon_N$ on the interval $[0,T]$ upon which each approximate solution $u_N$ exists as a consequence of the Cauchy-Peano theorem. This bound on $\epsilon_N$ also establishes \eqref{apriori-a} and \eqref{apriori-b}, with \eqref{apriori-c} following immediately from \eqref{apriori-4}.  

For the final claim, given any $\phi\in \W[1,p]$ we may select by density a sequence $\{\phi_j\}_1^\infty$ with each $\phi_j$ expressed as 
\begin{align*}
\phi_j = \sum_{i=1}^{j} a_i^jw_i;\quad\{a_i^j\}\subset\R
\end{align*}
so that $\phi_j \to \phi$ strongly in $\W[1,p]$. We may then use \eqref{approx1} along with H\"older's inequality to produce the bound 
\begin{align}\label{apriori-5}
  |\langle u_N'', \phi_j \rangle_p| &= |(u_N'',\phi_j)_\Omega| \notag\\
   &\leq ||\grad u_N||_p^{p-1} ||\grad\phi_j||_p  + |u_N|_p^{p-1}|\phi_j|_p \notag\\ 
  &\qquad + ||\grad u_N'||_2||\grad\phi_j||_2  + |u_N'|_2|\phi_j|_2 + |f(u_N)|_2|\phi_j|_2\notag\\
  &\leq C\left(||u_N||_{1,p}^{p(p-1)} +||u_N'||_{1,2} + |f(u_N)|_2\right)||\phi_j||_{1,p}\notag \\
  & \leq C\left(1+||u_N||_{1,p}^{p(p-1)} +||u_N'||_{1,2} + ||u_N||_{1,p} \right)||\phi_j||_{1,p}
\end{align}
since, as before, $|f(u_N)|_2 \leq C(1+||u_N||_{1,p})$.  By integrating the square of \eqref{apriori-5} the desired conclusion of \eqref{apriori-d} follows immediately from \eqref{apriori-a} and \eqref{apriori-c} which assert that each of $||u_N||_{1,p}$ and $||u_N'||_{1,2}$ is bounded in $L^2(0,T)$.
\end{proof}

An immediate consequence of Proposition~\ref{prop:apriori} along with the Banach-Alaoglu theorem and the standard Aubin-Lions-Simon compactness theorems (e.g., \cite[Thm. II.5.16]{Boyer2013}) is the following:
 \begin{subequations}
\begin{cor}\label{cor:converg} For all sufficiently small $\epsilon>0$ there exists a function $u$ and a subsequence of $\{u_N\}$ (still denoted $\{u_N\}$) such that  
	 \begin{alignat}{2}
	 	u_N&\to u &\text{ weak* in }&L^\infty(0,T;\W), \label{converg:a} \\
		u_N'&\to u' &\text{ weak* in }&L^\infty(0,T;\L[2]),\label{converg:b} \\
		u_N'&\to u' &\text{ weakly in }&L^2(0,T;\H[1]),\label{converg:c} \\
		u_N &\to u &\text{ strongly in }&C([0,T];\W[1-\epsilon,p]),\label{converg:d} \\
		u_N' &\to u' &\text{ strongly in }&L^2(0,T;\W[1-\epsilon,2]),\label{converg:e}\\
		u_N''&\to u'' &\text{ weakly in }&L^2(0,T;(\W)').\label{converg:g}
	\end{alignat}
\end{cor}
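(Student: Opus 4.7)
The plan is to obtain each convergence in two stages: first extract weak/weak* limits by reflexivity and Banach--Alaoglu from the bounds in Proposition~\ref{prop:apriori}, then upgrade to strong convergence on the two Sobolev scales via Aubin--Lions--Simon. A single subsequence handling every assertion is produced by a standard diagonal argument.

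First I would apply reflexivity. Since $\W$ is separable and reflexive, the dual identification $L^\infty(0,T;\W)=\bigl(L^1(0,T;(\W)')\bigr)'$ gives weak*-sequential compactness of any bounded ball, producing \eqref{converg:a}. The same reasoning on $L^2(\O)$ yields \eqref{converg:b}; reflexivity of $L^2(0,T;\H[1])$ and of $L^2(0,T;(\W)')$ yields \eqref{converg:c} and \eqref{converg:g}. These four extractions can be done in sequence, each time passing to a subsequence, giving a single subsequence along which all four convergences hold. The limits of $u_N',u_N''$ must then be identified with distributional time derivatives of the limit $u$: for any $v\in\W$ and $\psi\in C_c^\infty(0,T)$ one has $\int_0^T (u_N,v)_\O\psi'\,dt = -\int_0^T (u_N',v)_\O\psi\,dt$, and passing to the limit in both sides using \eqref{converg:a}--\eqref{converg:b} shows the weak* limit of $u_N'$ coincides in $\mathscr D'(0,T;(\W)')$ with $u_t$; the same argument applied once more identifies the weak limit in \eqref{converg:g} with $u''$.

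For the strong convergences I would use Aubin--Lions--Simon with carefully chosen Gelfand triples. For \eqref{converg:d}, take
\[
\W \;\hookrightarrow\!\!\!\hookrightarrow\; \W[1-\epsilon,p] \;\hookrightarrow\; L^2(\O),
\]
where the first embedding is compact by standard Sobolev interpolation and the second is continuous for $\e$ sufficiently small thanks to the 3D Sobolev embedding and $2<p<3$. From \eqref{apriori-a} the sequence $\{u_N\}$ is bounded in $L^\infty(0,T;\W)$ and from \eqref{apriori-c} the sequence $\{u_N'\}$ is bounded in $L^2(0,T;L^2(\O))$; Simon's theorem then yields relative compactness of $\{u_N\}$ in $C([0,T];\W[1-\epsilon,p])$. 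Since the weak* limit in $L^\infty(0,T;\W)$ has already been fixed as $u$, the strong limit must agree with $u$, giving \eqref{converg:d}. For \eqref{converg:e}, take
\[
\H[1]\;\hookrightarrow\!\!\!\hookrightarrow\;\W[1-\epsilon,2]\;\hookrightarrow\;(\W)',
\]
where the last embedding is obtained from the continuous chain $\W[1-\epsilon,2]\hookrightarrow L^2(\O)\hookrightarrow(\W)'$. Combining \eqref{apriori-c} and \eqref{apriori-d} with Aubin--Lions delivers strong convergence of $\{u_N'\}$ in $L^2(0,T;\W[1-\epsilon,2])$, which must coincide with $u'$ by the earlier identification, establishing \eqref{converg:e}.

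No step is genuinely difficult; the only point requiring mild care is the choice of the intermediate space in each Aubin--Lions triple so that the compact embedding on the left and the continuous embedding on the right are both available for the relevant values of $p$ and for all sufficiently small $\e\geq 0$. The restriction $2<p<3$ is exactly what makes the 3D Sobolev embeddings used here valid, and a single diagonal extraction produces one subsequence along which \eqref{converg:a}--\eqref{converg:g} hold simultaneously.
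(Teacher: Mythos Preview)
Your proposal is correct and follows exactly the approach the paper itself indicates: the paper simply states that Corollary~\ref{cor:converg} is ``an immediate consequence of Proposition~\ref{prop:apriori} along with the Banach--Alaoglu theorem and the standard Aubin--Lions--Simon compactness theorems,'' and your argument is precisely a careful unpacking of that sentence. The only cosmetic remark is that, since there are finitely many convergences, successive subsequence extractions suffice and no genuine diagonal argument is needed.
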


 By utilizing a routine density argument we also obtain convergence in the following sense:
\begin{cor}
	On a subsequence, 
	\begin{alignat}{2}
	u_N(t)&\to u(t)&\text{ weakly in }&\W\text{ for a.e. }t\in[0,T].\label{converg:f}
	\end{alignat}
\end{cor}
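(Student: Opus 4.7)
The plan is to exploit the strong convergence \eqref{converg:d} in $C([0,T];\W[1-\epsilon,p])$ together with the uniform $\W$-bound on $u_N(t)$ and the reflexivity of $\W$ (available since $2<p<3$) to promote the pointwise strong convergence in $\W[1-\epsilon,p]$ to pointwise weak convergence in $\W$. The subsequence already extracted in Corollary~\ref{cor:converg} will suffice; no further extraction is required.

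First, the a priori bound \eqref{apriori-a} furnishes a constant $C$, independent of $N$, with $\|u_N\|_{L^\infty(0,T;\W)}\le C$. Each $u_N$ is a classical Galerkin solution, hence continuous as a map $[0,T]\to V_N\subset\W$, so essential supremum coincides with supremum and $\|u_N(t)\|_{1,p}\le C$ for \emph{every} $t\in[0,T]$ and every $N$. Fix $t\in[0,T]$ for which $u(t)\in\W$ (a full-measure set, since $u\in L^\infty(0,T;\W)$; the continuous representative supplied by \eqref{converg:d} into $\W[1-\epsilon,p]$ lets one speak of $u(t)$ unambiguously).

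Next I would apply a subsequence/uniqueness-of-limits argument. Given any subsequence of $\{u_N(t)\}$, the uniform bound just established combined with the reflexivity of $\W$ (Eberlein-\v{S}mulian) yields a further subsequence $\{u_{N_k}(t)\}$ converging weakly in $\W$ to some $v$. The continuous embedding $\W\into\W[1-\epsilon,p]$ transfers this to weak convergence in $\W[1-\epsilon,p]$. On the other hand, \eqref{converg:d} gives strong (hence weak) convergence $u_{N_k}(t)\to u(t)$ in $\W[1-\epsilon,p]$, and uniqueness of weak limits in $\W[1-\epsilon,p]$ forces $v=u(t)$. Since every subsequence has a further subsequence converging weakly in $\W$ to the same limit $u(t)$, the Urysohn subsequence principle yields $u_N(t)\to u(t)$ weakly in $\W$.

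I do not anticipate a significant obstacle; the argument is routine once Corollary~\ref{cor:converg} is in hand. The mildly subtle point is that a single subsequence must serve for almost every $t$ simultaneously, and this is precisely what the Urysohn principle delivers in conjunction with the reflexivity of $\W$ afforded by $2<p<3$.
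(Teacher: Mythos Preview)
Your argument is correct. In fact, because each Galerkin approximation $u_N$ is continuous into $V_N\subset\W$, the bound $\|u_N(t)\|_{1,p}\le C$ holds at \emph{every} $t\in[0,T]$, and the Urysohn argument then yields $u_N(t)\to u(t)$ weakly in $\W$ for every $t$ (with $u(t)$ the continuous $\W[1-\epsilon,p]$-representative), which is slightly stronger than the stated ``a.e.'' conclusion. No further subsequence extraction is needed.

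Your approach, however, differs from the paper's. The paper does not argue pointwise via reflexivity and Urysohn; instead it passes from \eqref{converg:a} and \eqref{converg:d} to $u_N\to u$ weakly in $L^1(0,T;\W)$ and strongly in $L^1(0,T;\L[2])$, and then invokes an abstract result (\cite[Proposition~A.2]{PRT-p-Laplacain}) for the Gelfand triple $\W\into\L[2]\into(\W)'$, exploiting separability of $(\W)'$, to extract a subsequence with a.e.\ weak pointwise convergence. Your route is more elementary and self-contained---it avoids citing an external proposition and directly leverages the compactness already packaged in \eqref{converg:d}---and it delivers the bonus that the subsequence from Corollary~\ref{cor:converg} already works at every $t$. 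The paper's approach, on the other hand, is a reusable pattern that does not rely on the time-continuity of the approximants, which can matter in settings where the approximating sequence is only defined almost everywhere in $t$.
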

\begin{proof}
	From \eqref{converg:a} and \eqref{converg:d} we obtain $u_N\to u$ weakly in $L^1(0,T;\W)$ and $u_N\to u$ strongly in $L^1(0,T;\L[2])$, respectively.  Since each of the embeddings in the map $\W\into \L[2]\into (\W)'$ is continuous with dense range and $(\W)'$ is separable given that $\W$ is reflexive and separable, the desired conclusion follows by \cite[Proposition~A.2]{PRT-p-Laplacain}.
\end{proof}
\end{subequations}

\subsection{Passage to the limit}\label{S2-lim} By integrating \eqref{approx1} on $[0,t]$ it is seen that each approximate solution $u_N$ satisfies the identity 
\begin{multline}\label{limit-1}
		\int_0^t \langle u_N''(\tau),w_j\rangle_p \,d\tau  
		+ \int_0^t \langle -\Delta_p u_N(\tau), w_j\rangle_p \,d\tau
		+ \int_0^t \langle -\Delta_2 u_N'(\tau),w_j\rangle_2 \,d\tau \\
		= \int_0^t \int_\G f(u_N(\tau))w_j \,dS\,d\tau
\end{multline}
for $j=1,\ldots,N$.  As a first step in demonstrating that the limit function $u$ indeed verifies the identity \eqref{slnid} we shall first carefully pass to the limit as $N\to\infty$ in \eqref{limit-1}.  This process is routine for most of these terms with the principle difficulty being the recovery of the terms due to the $p$-Laplacian.  Precisely, while it is not difficult to show that $-\Delta_p u_N$ must converge to $\eta$ in some sense it is difficult to demonstrate that, in fact, $\eta=-\Delta_p u$.  We address this issue by appealing to results from monotone operator theory using an argument analogous to \cite{PRT-p-Laplacain} in the course of Proposition~\ref{prop:limlaplace}.

First, however, we record the crucial but easily addressed matters of convergence of the source term, the initial conditions, and the terms due to the damping. The first of these is an immediate consequence of Lemma~\ref{lem:f-lipshitz} and the convergence \eqref{converg:d}.

\begin{prop}\label{prop:limf} With $\{u_N\}$ and $u$ as in Corollary~\ref{cor:converg},
	\begin{align*}
		f(u_N)\to f(u)\text{ strongly in }L^\infty(0,T;\Lb[4/3]).
	\end{align*}
\end{prop}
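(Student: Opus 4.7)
\medskip
\noindent\textbf{Proof proposal for Proposition~\ref{prop:limf}.}
The plan is to combine the strong convergence of $u_N$ in $C([0,T];W^{1-\epsilon,p}(\Omega))$ given by \eqref{converg:d} with the local Lipschitz bound of Lemma~\ref{lem:f-lipshitz}, once a uniform bound on $\{u_N\}$ and $u$ in the space $W^{1-\epsilon,p}(\Omega)$ has been established.

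First I would observe that, since $u_N \to u$ strongly in $C([0,T];W^{1-\epsilon,p}(\Omega))$, the sequence $\{u_N\}$ is bounded in this space and the limit $u$ belongs to it as well. Hence there exists $R>0$ such that
\[
\sup_{t\in[0,T]}\|u_N(t)\|_{1-\epsilon,p} \leq R, \qquad \sup_{t\in[0,T]}\|u(t)\|_{1-\epsilon,p}\leq R,
\]
for all $N\geq 1$, provided $\epsilon>0$ is fixed sufficiently small so that Lemma~\ref{lem:f-lipshitz} applies.

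Next, fixing such an $\epsilon$, Lemma~\ref{lem:f-lipshitz} supplies a constant $C_R>0$ for which
\[
|f(u_N(t))-f(u(t))|_{4/3}^{4/3} \leq C_R\,\|u_N(t)-u(t)\|_{1-\epsilon,p}^{4/3}
\]
for every $t\in[0,T]$. Taking the $3/4$-th power and passing to the supremum over $t\in[0,T]$ yields
\[
\esssup_{t\in[0,T]} |f(u_N(t))-f(u(t))|_{4/3} \leq C_R^{3/4}\sup_{t\in[0,T]}\|u_N(t)-u(t)\|_{1-\epsilon,p}.
\]
The right-hand side tends to zero by \eqref{converg:d}, which gives the claimed strong convergence $f(u_N)\to f(u)$ in $L^\infty(0,T;L^{4/3}(\Gamma))$.

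The argument is essentially a one-line consequence of the preceding lemma and corollary; the only point requiring care is the selection of $\epsilon>0$, which must be small enough for Lemma~\ref{lem:f-lipshitz} to be applicable while also retaining the strong convergence \eqref{converg:d}. Both requirements are compatible for all sufficiently small $\epsilon>0$, so no genuine obstacle arises.
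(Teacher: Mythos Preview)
Your proof is correct and follows exactly the same approach as the paper's own argument: apply the local Lipschitz bound of Lemma~\ref{lem:f-lipshitz} together with the strong convergence \eqref{converg:d}. You have simply made explicit the uniform bound $R$ needed to invoke the local Lipschitz property, which the paper leaves implicit.
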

\begin{proof}
 From Lemma~\ref{lem:f-lipshitz} and \eqref{converg:d} in Corollary~\ref{cor:converg},
 \begin{align*}
 |f(u_N(t))-f(u(t))|_{4/3}\leq C||u_N(t)-u(t)||_{1-\epsilon,p}\to 0;\,t\in [0,T].
 \end{align*}
\end{proof}

As each approximate solution was constructed to satisfy the initial conditions given in \eqref{approx2}, the following which follows immediately from the convergence given in \eqref{approxIC}.

\begin{prop}\label{prop:ICs}
	The approximate solutions $\{u_N\}$ satisfy 
	\begin{align*}
	(u_N(0),u_N'(0)) \to (u_0,u_1)\text{ strongly in }\W\times\L[2].
	\end{align*}
	In particular, the limit function $u$ identified in Corollary~\ref{cor:converg} satisfies \begin{align*}
	(u(0),u_t(0))=(u_0,u_1)\text{ in }\W\times\L[2].
	\end{align*}
\end{prop}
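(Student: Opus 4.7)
The first claim is immediate from the construction: from \eqref{approxform} and \eqref{approxIC} we have $u_N(0) = \sum_{j=1}^N u^0_{N,j} w_j \to u_0$ strongly in $\W$, while $u_N'(0) = \mathscr{P}_N u_1 \to u_1$ strongly in $\L[2]$.

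To identify $u(0) = u_0$, the plan is to exploit \eqref{converg:d}, which provides $u_N \to u$ strongly in $C([0,T]; \W[1-\epsilon,p])$. Evaluating at $t=0$ yields $u_N(0) \to u(0)$ strongly in $\W[1-\epsilon,p]$; combined with $u_N(0) \to u_0$ in $\W \into \W[1-\epsilon,p]$, uniqueness of limits forces $u(0) = u_0$ in $\W[1-\epsilon,p]$. Since both are a priori elements of $\W$ (the left-hand side because $u \in C_w([0,T];\W)$), the identification holds in $\W$ as claimed.

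The identification $u_t(0) = u_1$ is more delicate, as no pointwise convergence of $u_N'$ at $t=0$ is directly available. The plan is an integration-by-parts argument in time. Fixing $\phi \in \W$ and $\psi \in C^1([0,T])$ with $\psi(T) = 0$, the scalar map $\tau \mapsto (u_N'(\tau), \phi)_\O$ is $C^1$ with derivative $\langle u_N''(\tau), \phi\rangle_p$, so classical integration by parts yields
\begin{align*}
\int_0^T \langle u_N''(\tau), \phi\rangle_p \psi(\tau)\,d\tau = -\int_0^T (u_N'(\tau), \phi)_\O\, \psi'(\tau)\,d\tau - (u_N'(0), \phi)_\O \psi(0).
\end{align*}
Passing to the limit in $N$ using \eqref{converg:g} on the left, \eqref{converg:c} in the time integral on the right, and the already-established $\L[2]$-convergence $u_N'(0) \to u_1$ gives the analogous identity for $u$ in which $u_1$ appears in the boundary term.

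To close, I will establish the same integration-by-parts formula directly for $u$ with $u_t(0)$ in place of $u_1$. The weak continuity $u' \in C_w([0,T]; \L[2])$ renders $h(\tau) := (u'(\tau), \phi)_\O$ a continuous scalar function on $[0,T]$, and the distributional identity $h'(\tau) = \langle u''(\tau), \phi\rangle_p \in L^2(0,T)$ (inherited from $u'' \in L^2(0,T; (\W)')$) places $h$ in $H^1(0,T)$ so that classical integration by parts applies. Subtracting the two identities yields $(u_t(0) - u_1, \phi)_\O \psi(0) = 0$; choosing $\psi$ with $\psi(0) \ne 0$ and varying $\phi$ over the dense subset $\W \subset \L[2]$ forces $u_t(0) = u_1$ in $\L[2]$. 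The main obstacle is this final step—extracting the trace $u_t(0)$ from weakly continuous data and recovering it through test functions—which is precisely what the combined regularity $u' \in C_w([0,T]; \L[2])$ and $u'' \in L^2(0,T; (\W)')$ supplies.
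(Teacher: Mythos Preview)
Your proof is correct. The paper itself offers essentially no argument here: it simply states that the proposition ``follows immediately from the convergence given in \eqref{approxIC}'' and leaves it at that. For the first claim and for $u(0)=u_0$ your reasoning matches what the paper presumably has in mind (the latter via \eqref{converg:d}).

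The identification $u_t(0)=u_1$ is where you supply substantially more than the paper does. Your integration-by-parts argument---passing to the limit in $\int_0^T\langle u_N'',\phi\rangle_p\,\psi\,d\tau$ and comparing with the same identity for $u$---is the standard way to recover initial traces in this setting, and it works here. One organizational remark: you invoke $u_t\in C_w([0,T];\L[2])$ to make sense of $u_t(0)$ as an $\L[2]$ element; in the paper this is Proposition~\ref{prop:uinCw}, which appears \emph{after} the present proposition. There is no circularity, since Proposition~\ref{prop:uinCw} relies only on Corollary~\ref{cor:converg}, but you may want either to reorder or to note that the relevant $C_w$ regularity follows directly from $u'\in L^\infty(0,T;\L[2])$, $u''\in L^2(0,T;(\W)')$, and the Lions--Magenes lemma without reference to anything downstream. (Your aside that $u\in C_w([0,T];\W)$ is needed to place $u(0)$ in $\W$ is similarly a forward reference, and in fact unnecessary: once $u(0)=u_0$ in $\W[1-\epsilon,p]$ and $u_0\in\W$, membership in $\W$ is automatic.)
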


Additionally, the convergence of the terms arising from the damping are fairly easy to justify oweing to the linearity of the Laplacian as recorded in the following proposition.

\begin{prop}\label{prop:dampconverg} With $\{u_N\}$ and $u$ as in Corollary~\ref{cor:converg}, 
	\begin{align*}
	-\Delta_2 u_N' \to -\Delta_2 u'\text{ weakly in }L^2(0,T;(\W[1,2])').	
	\end{align*}
\end{prop}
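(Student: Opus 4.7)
The plan is to observe that $-\Delta_2$ is a \emph{bounded linear} operator from $W^{1,2}(\Omega)$ into its dual (via the analogue of \eqref{plapalce} with $p=2$), so the result is essentially a soft consequence of the weak convergence of $u_N'$ in $L^2(0,T;W^{1,2}(\Omega))$ supplied by \eqref{converg:c}.

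More concretely, I would begin by noting that, by \eqref{plaplace-opnorm} applied with $p=2$, there is a constant $C$ such that $\|-\Delta_2 v\|_{(W^{1,2}(\Omega))'}\le C\|v\|_{1,2}$ for every $v\in W^{1,2}(\Omega)$. Consequently the induced map $\mathcal{A}\colon L^2(0,T;W^{1,2}(\Omega))\to L^2(0,T;(W^{1,2}(\Omega))')$ defined by $(\mathcal{A}v)(\tau)=-\Delta_2 v(\tau)$ is linear and bounded, hence weak-to-weak continuous. Applying $\mathcal{A}$ to the weakly convergent sequence $u_N'\rightharpoonup u'$ from \eqref{converg:c} yields the claim.

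To make this explicit: let $\Phi\in L^2(0,T;W^{1,2}(\Omega))$ be an arbitrary test element of the predual. Then
\begin{equation*}
\int_0^T\langle -\Delta_2 u_N'(\tau),\Phi(\tau)\rangle_2\,d\tau=\int_0^T\!\!\int_\Omega\nabla u_N'(\tau)\cdot\nabla\Phi(\tau)\,dx\,d\tau+\int_0^T\!\!\int_\Gamma u_N'(\tau)\Phi(\tau)\,dS\,d\tau.
\end{equation*}
The right-hand side is a continuous linear functional on $L^2(0,T;W^{1,2}(\Omega))$ (with norm controlled by $\|\Phi\|_{L^2(0,T;W^{1,2}(\Omega))}$, using the trace inequality for the boundary term), so passing to the limit using \eqref{converg:c} replaces $u_N'$ by $u'$ on the right, which is exactly $\int_0^T\langle -\Delta_2 u'(\tau),\Phi(\tau)\rangle_2\,d\tau$.

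There is no real obstacle here: the only thing to verify is that the boundary and interior pieces of the $-\Delta_2$ pairing together define a bounded bilinear form on $W^{1,2}(\Omega)\times W^{1,2}(\Omega)$, which follows from Cauchy--Schwarz and the continuity of the trace $W^{1,2}(\Omega)\to L^2(\Gamma)$. The essential reason the proposition is easy, in contrast to the $p$-Laplacian term handled via monotone operator theory in Proposition~\ref{prop:limlaplace}, is precisely the linearity of $-\Delta_2$, which lets weak convergence of $u_N'$ be transported directly through the operator.
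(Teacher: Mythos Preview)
Your proof is correct and follows essentially the same route as the paper: test against $\phi\in L^2(0,T;W^{1,2}(\Omega))$, expand the pairing into its interior and boundary pieces, and pass to the limit. The one small difference is that the paper handles the boundary integral $\int_0^T\int_\Gamma u_N'\phi\,dS\,d\tau$ by invoking the \emph{strong} convergence \eqref{converg:e} together with the trace map $W^{1-\epsilon,2}(\Omega)\to L^2(\Gamma)$, whereas you treat both pieces at once by recognizing that $-\Delta_2$ is a bounded linear operator $W^{1,2}(\Omega)\to(W^{1,2}(\Omega))'$ (using the continuity of the trace $W^{1,2}(\Omega)\to L^2(\Gamma)$) and hence weak-to-weak continuous on the Bochner space, relying only on \eqref{converg:c}. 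Your packaging is slightly more economical in that it avoids the compactness result \eqref{converg:e} altogether.
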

\begin{proof}For $\phi\in L^2(0,T;\W[1,2])$ we have 
	\begin{align*}
	\int_0^T \langle -\Delta_2 u_N'(\tau),\phi(\tau)\rangle_2\,d\tau = 
	\underbrace{\int_0^T \int_\O \grad u_N'(\tau)\cdot\grad\phi(\tau)\,dxd\tau}_\text{(i)} + 
	\underbrace{\int_0^T \int_\G u_N'(\tau)\phi(\tau)\,dSd\tau}_\text{(ii)}.	
	\end{align*}
	For $\text{(i)}$, we have $\grad u_N' \to \grad u'$ weakly in $L^2(0,T;\L[2])$ from \eqref{converg:c} with $\grad\phi\in L^2(0,T;\L[2])$; and for $\text{(ii)}$, we use the fact that $u_N' \to u'$ strongly in $L^2(0,T;\Lb[2])$ from \eqref{converg:e} along with the continuity of the map $\W[1-\epsilon,2]\tinto \Lb[2]$ for sufficiently small $\epsilon>0$.  Thus,
	\begin{align*}
	\int_0^T \langle -\Delta_2 u_N'(\tau),\phi(\tau)\rangle_2 \,d\tau &\to  
	\int_0^T \int_\O \grad u'(\tau)\cdot\grad\phi(\tau)\,dxd\tau
	+ \int_0^T \int_\G u'(\tau)\phi(\tau)\,dSd\tau\\
	&= \int_0^T \langle -\Delta_2 u'(\tau),\phi(\tau)\rangle_2\,d\tau.
	\end{align*}
\end{proof}

We are now in a position to address the more difficult terms arising from the $p$-Laplacian. 

\begin{prop}\label{prop:limlaplace}
	Up to a subsequence, the sequence of approximate solutions satisfies 
	\begin{align*}
	-\Delta_pu_N \to -\Delta_p u\text{ weak* in }L^\infty(0,T;(\W)'). 
	\end{align*}
\end{prop}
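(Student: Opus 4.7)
The strategy is the Minty--Browder monotonicity trick, exploiting that $-\Delta_p:\W\to(\W)'$ defined by \eqref{plapalce} is monotone and hemicontinuous. First, \eqref{apriori-a} and the operator-norm bound \eqref{plaplace-opnorm} show that $\{-\Delta_p u_N\}$ is bounded in $L^\infty(0,T;(\W)')$, so by Banach--Alaoglu (in the dual of the separable space $L^1(0,T;\W)$) a subsequence converges weak* to some limit $\eta\in L^\infty(0,T;(\W)')$. The remainder of the proof consists in identifying $\eta=-\Delta_p u$.

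Combining this weak* limit with Propositions \ref{prop:limf}, \ref{prop:ICs}, \ref{prop:dampconverg} and \eqref{converg:g}, one passes to the limit in the approximate identity \eqref{limit-1} along the basis $\{w_j\}$ and extends by density to test functions in $\W$, obtaining the version of \eqref{slnid} for $u$ with $\eta$ replacing $-\Delta_p u$. Testing this limit identity against $u$ itself and integrating by parts in time isolates $\int_0^T\langle\eta,u\rangle_p\,d\tau$; in parallel, testing the Galerkin equation \eqref{approx1} against $u_N$ (permissible since $u_N(t)\in V_N$) and integrating by parts isolates $\int_0^T\langle-\Delta_p u_N,u_N\rangle_p\,d\tau$. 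A term-by-term comparison of these two expressions yields convergence of all the interior contributions --- $\int_0^T\|u_N'\|_2^2\,d\tau$ via \eqref{converg:e}, the source integral by Proposition \ref{prop:limf} together with the trace embedding into $L^4(\G)$, and the initial contributions by Proposition \ref{prop:ICs} --- leaving only the endpoint terms $(u_N'(T),u_N(T))_\O$ and $\tfrac12\|u_N(T)\|_{1,2}^2$ to handle with care, since only a.e.\ pointwise convergence in time is a priori available.

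These endpoint contributions constitute the principal obstacle. The strong convergence \eqref{converg:d} together with the uniform bound in $\W$ upgrades to $u_N(T)\to u(T)$ weakly in $\W$ (hence strongly in $L^2(\O)$ by Rellich--Kondrachov, and weakly in $\H$); weak lower semicontinuity then gives $\liminf\tfrac12\|u_N(T)\|_{1,2}^2\geq\tfrac12\|u(T)\|_{1,2}^2$, while the bounds $u_N'\in L^2(0,T;\H)$ together with $u_N''\in L^2(0,T;(\W)')$ produce, via a standard weak continuity-in-time argument, $u_N'(T)\to u'(T)$ weakly in a suitable intermediate space; splitting $(u_N'(T),u_N(T))_\O-(u'(T),u(T))_\O=(u_N'(T),u_N(T)-u(T))_\O+(u_N'(T)-u'(T),u(T))_\O$ and using uniform boundedness of $\|u_N'(T)\|_2$ together with the strong $L^2$-convergence of $u_N(T)$ shows both pieces vanish. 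Taken together these yield $\limsup\int_0^T\langle-\Delta_p u_N,u_N\rangle_p\,d\tau\leq\int_0^T\langle\eta,u\rangle_p\,d\tau$. The Minty step is now routine: monotonicity of $-\Delta_p$ gives $\int_0^T\langle-\Delta_p u_N-(-\Delta_p v),u_N-v\rangle_p\,d\tau\geq 0$ for every $v\in L^p(0,T;\W)$; expanding, taking $\limsup$, and invoking the preceding inequality produces $\int_0^T\langle\eta-(-\Delta_p v),u-v\rangle_p\,d\tau\geq 0$. Setting $v=u-\lambda w$ with $\lambda>0$ and $w\in L^p(0,T;\W)$ arbitrary, dividing by $\lambda$, and letting $\lambda\to 0^+$ while invoking the hemicontinuity of $-\Delta_p$ yields $\int_0^T\langle\eta-(-\Delta_p u),w\rangle_p\,d\tau\geq 0$ for all $w$, forcing $\eta=-\Delta_p u$.
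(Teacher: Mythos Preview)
Your proposal is correct and follows the same overall monotonicity/Minty--Browder strategy as the paper, but the execution differs in two places worth noting.

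First, to obtain the identity expressing $\int_0^t\langle\eta,u\rangle_p\,d\tau$ in terms of the remaining quantities, the paper does \emph{not} test the limit equation directly against $u$ as you do. Instead it uses a two-pass substitution: it multiplies the Galerkin equation \eqref{approx1} by a scalar function $\phi\in C^1([0,T])$, integrates, and passes to the limit in $N$ to obtain an identity involving $\eta$ against $\phi(t)w_j$; it then substitutes $\phi=u_{N,j}$, sums over $j\leq N$, and passes to the limit once more. This indirect route has the advantage that one never has to justify using $u$ itself as a time-dependent test function in a weak formulation containing $u''\in L^2(0,T;(\W)')$---the substitutions are always against $C^1$ scalar coefficients times basis vectors.

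Second, the paper carries out the comparison on $[0,t]$ for a.e.\ $t\in[0,T]$ rather than at the single endpoint $t=T$. The pointwise terms $(u_N'(t),u_N(t))_\O$ and $\tfrac12\|\nabla u_N(t)\|_2^2$ are then handled by extracting a.e.-in-time convergent subsequences from the strong $L^2(0,T;\cdot)$ convergences \eqref{converg:d}--\eqref{converg:e}, together with weak lower semicontinuity via \eqref{converg:f}. Your approach works specifically at $t=T$ and therefore must argue that $u_N'(T)\to u'(T)$ weakly in $L^2(\Omega)$; this is true (integrate $u_N''$ against $\phi\in\W$ and use \eqref{converg:g} plus density), but you have only gestured at it (``a standard weak continuity-in-time argument''). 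The paper's a.e.\ formulation avoids that issue entirely, at the modest cost of an implicit passage from a.e.\ $t$ to $t=T$ at the very end.

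Both routes are sound; yours is closer to a textbook Minty argument, while the paper's substitution trick trades directness for fewer regularity justifications.
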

\begin{proof}
	Throughout, set $X=L^p(0,T;\W)$. 
 By utilizing the operator norm bound on $-\Delta_p$ from \eqref{plaplace-opnorm} we see that 
	 	\begin{align*}
	 		||-\Delta_p u_N||_{L^\infty(0,T;(\W)')} &= \esssup_{\tau\in[0,T]}||-\Delta_p u_N(\tau)||_{(\W)'}\\
	 		&\leq 2\esssup_{\tau\in[0,T]}||u_N(\tau)||_{1,p}^{p-1}<\infty 
	 	\end{align*}
	 	by virtue of \eqref{apriori-a}, whereupon the sequence $\{-\Delta_p u_N\}$ is seen to be bounded in $L^\infty(0,T;(\W)')$.  As such there exists some $\eta \in L^\infty(0,T;(\W)')$ and a subsequence of $\{u_N\}$ so that  
	 	\begin{align}\label{limlaplace-1}
	 		-\Delta_p u_N\to \eta\text{ weak* in }L^\infty(0,T;(\W)').  
	 	\end{align}
	 	By viewing $L^\infty(0,T;(\W)')$ as a subspace of $X'$ the desired conclusion follows immediately by demonstrating that $-\Delta_p u_N \to -\Delta_p u$ weakly in $X'$.
	 	We first note that the $p$-Laplacian extends to a maximal monotone operator on $X$ with action 
	 	\begin{align*}
	 		\langle -\Delta_p u,\phi\rangle_{X',X} = \int_0^T \langle -\Delta_p u(\tau),\phi(\tau)\rangle_p \,d\tau;\qquad u,\phi\in X
	 	\end{align*}  
	 	in accordance with Lemma~\ref{lem:plaplace-mmono}.  Using a standard result from monotone operator theory (see \cite{Barbu2010}, for instance) we may conclude that $\eta = -\Delta_p u$ in $X'$  provided 
	\begin{align*}
	\limsup_{N\to\infty} \langle -\Delta_p u_N -\eta, u_N-u\rangle_{X',X}\leq 0.
	\end{align*} 
	Since from Corollary~\ref{cor:converg} we have $\langle \eta, u_N-u\rangle_{X',X}\to 0$ whereas $L^\infty(0,T;\W)\subset X$ and $\langle -\Delta_p u_N , u\rangle_{X',X} \to \langle \eta, u\rangle_{X',X}$ from \eqref{limlaplace-1} it is in turn enough to demonstrate that 
	\begin{align}\label{limlaplace-wts}\rev
	\limsup_{N\to\infty}\langle -\Delta_p u_N,u_N\rangle_{X',X} \leq \langle  \eta, u\rangle_{X',X}.
	\end{align}
	
	Multiplying equation \eqref{approx1} by $u_{N,j}$ and summing over $j=1,\ldots,N$ we obtain the relation 
	\begin{align}\label{limlaplace-s1}
	(u_N'',u_N)_\Omega + \langle -\Delta_p u_N,u_N\rangle_p + (\grad u_N',\grad u_N)_\Omega + (u_N',u_N)_\Gamma = (f(u_N),u_N)_\Gamma. 
	\end{align}
	Rearranging \eqref{limlaplace-s1} and integrating over $[0,t]$ we thus obtain 
	\begin{multline}\label{limlaplace-s1p}
	\int_0^t \langle -\Delta_p u_N,u_N\rangle_p \,d\tau = -\int_0^t (u_N'',u_N)_\Omega\,d\tau -\int_0^t (\grad u_N',\grad u_N)_\Omega\,d\tau \\
	  -\int_0^t (u_N',u_N)_\Gamma \,d\tau +\int_0^t \int_\G f(u_N)u_N \,dS \,d\tau. \ptag{limlaplace-s1}
	\end{multline}
	Thus, upon integrating by parts we may write  
	\begin{multline}\label{limlaplace-s1pp}
	\int_0^t \langle -\Delta_p u_N,u_N\rangle_p \,d\tau = 
		\underbrace{(u_N'(0),u_N(0))_\Omega
		 - (u_N'(t),u_N(t))_\Omega}_\text{(i)} \\ 
		  + \underbrace{\int_0^t ||u_N'(\tau)||_2^2\,d\tau}_\text{(ii)} 
		  - \underbrace{\frac{1}{2}||\grad u_N(t)||_2^2 + \frac{1}{2}||\grad u_N(0)||_2^2}_\text{(iii)}\\
		  -\underbrace{\int_0^t (u_N',u_N)_\Gamma \,d\tau}_\text{(iv)}
		   +\underbrace{\int_0^t \int_\G f(u_N)u_N \,dS \,d\tau}_\text{(v)}. \ptag{limlaplace-s1p}
	\end{multline}
	The convergence of these terms warrants special attention:
	\begin{enumerate}[(i)]
				\setlength{\itemsep}{5pt}
	
	\item \sloppy From Proposition~\ref{prop:ICs},  $(u_N'(0),u_N(0))_\Omega\to(u_1,u_0)_\Omega$.
	Using \eqref{converg:d} in Corollary~\ref{cor:converg} we obtain  $||u_N-u||_2\to 0$ in $L^2(0,T)$, and hence on a subsequence $u_N(t)\to u(t)$ strongly in $\L[2]$, a.e. $t\in[0,T]$. Similarly, from \eqref{converg:e} we find $u_N'(t)\to u'(t)$ strongly in $\L[2]$ for a.e. $t\in[0,T]$ on a subsequence.  Thus, $(u_N'(t),u_N(t))_\Omega\to (u'(t),u(t))_\Omega$ for a.e. $t\in[0,T]$ on a common subsequence of $\{u_N\}$ and $\{u_N'\}$.

	\item Since $u_N'\to u'$ strongly in $L^2(0,T;\L[2])$ from \eqref{converg:e} in Corollary~\ref{cor:converg},  then it follows that
	\begin{align*}
	\int_0^t ||u_N'(\tau)||_2^2\,d\tau \to \int_0^t ||u'(\tau)||_2^2\,d\tau.
	\end{align*}

	\item    From \eqref{converg:f} we have $\grad u_N(t)\to \grad u(t)$ weakly in $\L[2]$ for a.e. $t\in[0,T]$, whereby 
	using the weak lower semicontinuity of norms we obtain \begin{align*}
		\limsup_{N\to\infty}-\frac{1}{2}||\grad u_N(t)||_2^2 
		= -\frac{1}{2}\liminf_{N\to\infty}||\grad u_N(t)||_2^2 
		\leq -\frac{1}{2}||\grad u(t)||_2^2;\quad \text{a.e. }[0,T].
		\end{align*}
	At $t=0$ we find $\grad u_N(0) \to \grad u(0)$ strongly in $(L^p(\Omega))^3$ from Proposition~\ref{prop:ICs}, and thus $||\grad u_N(0)||_2^2 \to ||\grad u(0)||_2^2$.

\item From \eqref{converg:d} in Corollary~\ref{cor:converg} and the embedding $\W[1-\epsilon,p] \into \Lb[2]$ we have up to a subsequence $u_N(t)\to u(t)$ strongly in $C([0,T];\Lb[2])$, and similarly from \eqref{converg:c} and the embedding $\W[1,2]\into \Lb[2]$ we obtain $u_N'\to u'$ weakly in $L^2(0,T;\Lb[2])$. Hence, $u_Nu_N'\to uu'$ weakly in $L^1(\Gamma \times (0,T))$. By using  the indicator function $\chi_{[0,t]}$ one has 
	\begin{align*}
		\int_0^t (u_N',u_N)_\Gamma\,d\tau \to \int_0^t (u',u)_\Gamma\,d\tau.
	\end{align*}

	\item Since $f(u_N)\to f(u)$ strongly in $L^\infty(0,T;\Lb[4/3])$ by Proposition~\ref{prop:limf} and $u_N\to u$ strongly in $C([0,T];\Lb[4])$ from Corollary~\ref{cor:converg} and the embedding $\W[1-\epsilon,p]\to \Lb[4]$,  therefore for any $t\in [0,T]$ we have 
	$$\int_0^t \int_\G f(u_N)u_N \,dS \,d\tau \to  \int_0^t \int_\G f(u)u \,dS \,d\tau. $$
		\end{enumerate}
We may thus take the limit  superior as $N\to\infty$ in \eqref{limlaplace-s1pp} to obtain 
 \begin{multline}\label{limlaplace-s2}
\limsup_{N\to\infty} \int_0^t  \langle -\Delta_p u_N,u_N\rangle_p\,d\tau 
\leq (u'(0),u(0))_\Omega - (u'(t),u(t))_\Omega  \\ 
+ \int_0^t ||u'(\tau)||_2^2\,d\tau  
-\frac{1}{2}||\grad u(t)||_2^2 + \frac{1}{2}||\grad u(0)||_2^2\\  
-\int_0^t (u',u)_\Gamma \,d\tau + \int_0^t \int_\G f(u)u \,dS \,d\tau \,\,\, \text{ a.e. }t\in[0,T].
\end{multline}
In order to express the right hand side of \eqref{limlaplace-s2} in terms of $\eta$ we utilize the separable nature of the approximate solutions to effect a limit of \eqref{limlaplace-s1pp} through a different means. Towards these ends, multiplying \eqref{approx1} by any $\phi\in C^1([0,T])$ and integrating on $[0,t]$ yields 
\begin{multline}\label{limlaplace-s3}
	\int_0^t \langle -\Delta_p u_N,\phi w_j\rangle_p \,d\tau = 
	(u_N'(0),\phi(0)w_j)_\Omega - (u_N'(t),\phi(t)w_j)_\Omega \\ 
	+ \int_0^t (u_N',\phi'w_j)_\Omega\,d\tau 
	-\int_0^t (\grad u_N',\phi\grad w_j)_\Omega\,d\tau \\ 
	-\int_0^t (u_N',\phi w_j)_\Gamma \,d\tau +\int_0^t \int_\G f(u_N)\phi w_j\, dS \,d\tau.
\end{multline}
From the convergence given in Corollary~\ref{cor:converg} and using arguments analogous to those used in obtaining \eqref{limlaplace-s2} we obtain by taking the limit in \eqref{limlaplace-s3} as $N\to\infty$ that
\begin{multline}\label{limlaplace-s4}
\int_0^t \langle \eta,\phi w_j\rangle_p \,d\tau = 	(u'(0),\phi(0)w_j)_\Omega - (u'(t),\phi(t)w_j)_\Omega \\ 
+ \int_0^t (u',\phi'w_j)_\Omega\,d\tau 
-\int_0^t (\grad u',\phi\grad w_j)_\Omega\,d\tau \\ 
-\int_0^t (u',\phi w_j)_\Gamma \,d\tau +\int_0^t \int_\G f(u)\phi w_j\, dS \,d\tau\text{ a.e. }[0,T].
\end{multline}
Here, since $-\Delta_p u_N\to\eta$ weakly in $X'$ we are able to make the identification 
\begin{align*}
\lim_{N\to\infty}\int_0^t \langle -\Delta_p u_N,\phi w_j\rangle_p \,d\tau = \int_0^t \langle \eta , \phi w_j\rangle_p \,d\tau
\end{align*}
in the smaller space $L^{p'}(0,t;(\W)')$.  Now, replacing $\phi(t)$ with $u_{N,j}(t)$ in \eqref{limlaplace-s4}  and summing over $j=1,\ldots,N$ we obtain  
\begin{multline}\label{limlaplace-s5}
\int_0^t \langle \eta,u_N\rangle_p \,d\tau = 	(u'(0),u_N(0))_\Omega - (u'(t),u_N(t))_\Omega \\ 
+ \int_0^t (u',u_N')_\Omega\,d\tau 
-\int_0^t (\grad u',\grad u_N)_\Omega\,d\tau \\ 
-\int_0^t (u',u_N)_\Gamma \,d\tau +\int_0^t \int_\G f(u)u_N \,dS d\tau.
\end{multline}
Taking the limit in \eqref{limlaplace-s5} as $N\to\infty$ we obtain
\begin{multline}\label{limlaplace-s6}
\int_0^t \langle \eta,u\rangle_p \,d\tau = 	(u'(0),u(0))_\Omega - (u'(t),u(t))_\Omega \\ 
+ \int_0^t ||u'(\tau)||_2^2\,d\tau 
-\int_0^t (\grad u',\grad u)_\Omega\,d\tau \\ 
-\int_0^t (u',u)_\Gamma \,d\tau +\int_0^t \int_\G f(u)u\,dS d\tau,
\end{multline}
whose right hand side is identical to the  right hand side of \eqref{limlaplace-s2}  after identifying 
\begin{align*}
\int_0^t (\grad u',\grad u)_\Omega\,d\tau = \frac{1}{2}||\grad u(t)||_2^2 - \frac{1}{2}||\grad u(0)||_2^2
\end{align*} with the aid of \cite[Prop. II.5.11]{Boyer2013}, for instance. 
That is, we have shown
\begin{align*}
\rev\limsup_{N\to\infty}\int_0^t \langle -\Delta_p u_N, u_N\rangle_p\,d\tau \leq \int_0^t \langle \eta,u\rangle_p\,d\tau\text{ a.e. }[0,T].
\end{align*}
Hence,  \eqref{limlaplace-wts} is indeed valid and we have $-\Delta_p u_N\to -\Delta_p u$ weakly in $X'$ completing the proof.
\end{proof}

From Propositions~\ref{prop:limf}, \ref{prop:dampconverg}, and \ref{prop:limlaplace} along with the convergence in \eqref{converg:g} we are now justified in taking the limit in \eqref{limit-1} as $N\to\infty$ and concluding that the limit function $u$ verifies the identity 
\begin{multline}\label{limit-2}
\int_0^t \langle u''(\tau),w_j\rangle_p \,d\tau  
+ \int_0^t \langle -\Delta_p u(\tau), w_j\rangle_p \,d\tau
+ \int_0^t \langle -\Delta_2 u'(\tau),w_j\rangle_2 \,d\tau \\
= \int_0^t \int_\G f(u)w_j \,dS\,d\tau
\end{multline}
for all $j\in\naturals$.  

\subsection{Verification that the limit is a solution}\label{S2.4}
To show that the limit function $u$ does indeed satisfy every criterion of Definition~\ref{def:weaksln} we begin by addressing the identity \eqref{slnid}.

Given any function $\psi\in\W$ we may construct by density a sequence $\{\psi_n\}$ of finite linear combinations of the basis vectors $\{w_j\}$ in the form 
\begin{align*}
\psi_n = \sum_{j=1}^{n} a_{n ,j}w_j
\end{align*}
for a sequence of scalars $\{a_{n,j}\}\subset \R$ such that $\psi_n\to \psi$ strongly in $\W$, exactly as was done in Section~\ref{S2.1}.  Using linearity we may thus replace $w_j$ with $\psi_n$ in \eqref{limit-2}, whereupon taking the limit as $n\to\infty$ we obtain 
\begin{multline}\label{limit-3}
\int_0^t \langle u''(\tau),\psi\rangle_p \,d\tau  
+ \int_0^t \langle -\Delta_p u(\tau), \psi\rangle_p \,d\tau
+ \int_0^t \langle -\Delta_2 u'(\tau),\psi\rangle_2 \,d\tau \\
= \int_0^t \int_\G f(u)\psi \,dS\,d\tau.
\end{multline}
In order to replace $\psi$ with a time dependent function as required in \eqref{slnid} we shall first differentiate \eqref{limit-3} in time, whereupon given any test function $\phi\in C_w([0,T];\W)$ with $\phi_t\in L^2(0,T;\L[2])$ we may replace $\psi$ with $\phi(\tau)$ in (\ref{limit-3}) to obtain 
\begin{multline}\label{limit-4}
\langle u''(\tau),\phi(\tau)\rangle_p   
+ \langle -\Delta_p u(\tau), \phi(\tau)\rangle_p 
+ \langle -\Delta_2 u'(\tau),\phi(\tau)\rangle_2 
= \int_\G f(u(\tau))\phi(\tau) \,dS\ptag{limit-3}
\end{multline}
for a.e. $\tau\in[0,T]$.  With the aid of the product rule in Proposition~\ref{prop:prodrule} (as in \cite[Proposition A.1]{PRT-p-Laplacain}) we may identify the first term in \eqref{limit-4} as  
\begin{align*}
\langle u''(\tau),\phi(\tau)\rangle_p = \frac{d}{dt} (u'(\tau),\phi(\tau))_\O - (u'(\tau),\phi'(\tau))_\O
\end{align*}
for a.e. $\tau\in [0,T]$, whereby we may integrate \eqref{limit-4} on $[0,t]$ to obtain
\begin{multline}\label{limit-5}
(u'(t),\phi(t))_\O - (u_1,\phi(0))_\O - \int_0^t (u'(\tau),\phi'(\tau))_\O\,d\tau  
+ \int_0^t \langle -\Delta_p u(\tau), \phi\rangle_p \,d\tau\\
+ \int_0^t \langle -\Delta_2 u'(\tau),\phi\rangle_2 \,d\tau 
= \int_0^t \int_\G f(u)\phi \,dS\,d\tau\ptag{limit-4}
\end{multline}
Thus, $u$ satisfies the desired identity \eqref{slnid}. In addition, the additional weak continuity in time of $u$ and $u_t$, as required  in items \eqref{def-a} and \eqref{def-b} of Definition \ref{def:weaksln}, is easily addressed by the following proposition:

\begin{prop}\label{prop:uinCw}
The limit function $u$ identified in Corollary~\ref{cor:converg} satisfies both 
	$u\in C_w([0,T];\W)$ and $u_t\in C_w([0,T];\L[2])$.
\end{prop}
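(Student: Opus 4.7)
The plan is to invoke the classical Strauss weak-continuity lemma (Lions--Magenes; see e.g.\ \cite[Ch.~II]{Boyer2013}): if $v\in L^\infty(0,T;X)\cap C([0,T];Y)$ with $X$ a reflexive Banach space continuously embedded in $Y$, then $v\in C_w([0,T];X)$. The two required auxiliary facts are (a) strong continuity into a larger space, and (b) an $L^\infty$-bound in the target space, both of which are furnished by Proposition~\ref{prop:apriori} and Corollary~\ref{cor:converg}.

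For the statement $u\in C_w([0,T];\W)$, I would first upgrade the regularity of $u$. Since $u_t\in L^2(0,T;W^{1,2}(\Omega))\hookrightarrow L^2(0,T;\L[2])$ by \eqref{converg:c}, the Bochner fundamental theorem gives $u\in W^{1,2}(0,T;\L[2])\hookrightarrow C([0,T];\L[2])$ (after modification on a null set). Combined with $u\in L^\infty(0,T;\W)$ from \eqref{converg:a} and the Sobolev embedding $\W\hookrightarrow \L[2]$ (valid in $\R^3$ for $2<p<3$), the Strauss lemma with $X=\W$ (reflexive since $p>1$) and $Y=\L[2]$ then yields $u\in C_w([0,T];\W)$.

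For $u_t\in C_w([0,T];\L[2])$ I would proceed symmetrically in the dual direction. From \eqref{converg:b} and \eqref{converg:g} we have $u_t\in L^\infty(0,T;\L[2])$ and $u_{tt}\in L^2(0,T;(\W)')$, so by the same absolute-continuity argument $u_t\in W^{1,2}(0,T;(\W)')\hookrightarrow C([0,T];(\W)')$. The embedding $\W\hookrightarrow \L[2]$ is continuous and dense, so by duality $\L[2]\hookrightarrow (\W)'$. Applying the Strauss lemma with $X=\L[2]$ (Hilbert, hence reflexive) and $Y=(\W)'$ gives $u_t\in C_w([0,T];\L[2])$.

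The only mildly delicate point is checking that the $L^\infty$-representatives and the strongly continuous representatives can be identified, but this is routine: at any $t\in[0,T]$, the pointwise value given by the strongly continuous representative in the weaker space is the weak limit in the reflexive space of $u(t_n)$ along any sequence $t_n\to t$ chosen from the full-measure set where the $L^\infty$-bound holds, which simultaneously forces $\|u(t)\|_X\le \|u\|_{L^\infty(0,T;X)}$ by weak lower semicontinuity of the norm. No genuine obstacle is expected beyond bookkeeping.
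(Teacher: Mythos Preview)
Your proposal is correct and follows essentially the same approach as the paper: both invoke the Strauss/Lions--Magenes weak-continuity lemma, combining the $L^\infty$ bounds from Corollary~\ref{cor:converg} with strong continuity into a larger space. The only cosmetic difference is in the choice of that larger space for $u$: the paper uses $C([0,T];\W[1-\epsilon,p])$ directly from \eqref{converg:d}, whereas you obtain $C([0,T];\L[2])$ from $u_t\in L^2(0,T;\L[2])$; for $u_t$ the two arguments are identical.
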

\begin{proof}
	As $\W$ is a separable, reflexive Banach space and $\W\into \W[1-\epsilon,p]$ is a continuous mapping, 
	\begin{align*}
	u\in L^\infty(0,T;\W)\cap C([0,T];\W[1-\epsilon,p])\subset C_w([0,T];\W)
	\end{align*}
	by \cite[p. 275]{LM1}, for instance. Thus, the first conclusion of the proposition holds.

	As for the second conclusion, we note that from Corollary \eqref{cor:converg} we see that 
	$u_t\in L^2(0,T;\W[1,2])$ by \eqref{converg:c} and $u_{tt}\in L^2(0,T;(\W)')$ by \eqref{converg:g}.  
	Since the embedding $\rev \W[1,2] \into (\W[1,p])'$ is continuous and $\W[1,2]$ is dense in $\rev(\W[1,p])'$, 
	then up to possible modification on a set of measure zero, we may select a representative $u_t\in C([0,T];(\W[1,p])')$  by \cite[Prop. II.5.11]{Boyer2013}, for instance.  This yields
	\begin{align*}
	u_t\in L^\infty(0,T;\L[2])\cap C([0,T];(\rev\W[1,p])') \subset C_w([0,T];\L[2]),
	\end{align*}
completing the proof.	
 \end{proof}

\subsection{Energy inequality}\label{s1-energy}
 Having demonstrated that $u$ is a solution in the full sense of Definition~\ref{def:weaksln} it now remains only to demonstrate the energy inequality in the statement of Theorem~\ref{thm:exist}.

\begin{prop}\label{energy} The limit function $u$ identified in Corollary~\ref{cor:converg} satisfies the energy inequalities \eqref{energy-2ndid} and \eqref{energy-1stid} in the statement of Theorem~\ref{thm:exist}.
\end{prop}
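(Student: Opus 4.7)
The plan is to derive \eqref{energy-2ndid} by taking the limit inferior as $N \to \infty$ in the Galerkin energy identity \eqref{apriori-3},
\begin{align*}
\epsilon_N(t) + \int_0^t \|u_N'(\tau)\|_{1,2}^2\,d\tau = \epsilon_N(0) + \int_0^t \int_\Gamma f(u_N(\tau))u_N'(\tau)\,dS\,d\tau.
\end{align*}
The right-hand side will converge as a genuine limit: Proposition~\ref{prop:ICs} delivers $\epsilon_N(0) \to \E(0)$, while for the boundary source integral, Proposition~\ref{prop:limf} provides $f(u_N) \to f(u)$ strongly in $L^\infty(0,T; \Lb[4/3])$, and \eqref{converg:c} combined with the trace embedding $\H \into \Lb[4]$ (valid in $\R^3$) yields $u_N' \to u'$ weakly in $L^2(0,T; \Lb[4])$. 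A standard strong--weak duality pairing then gives $\int_0^t \int_\Gamma f(u_N)u_N'\,dS\,d\tau \to \int_0^t \int_\Gamma f(u)u'\,dS\,d\tau$.

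On the left-hand side I apply weak lower semicontinuity. The arguments in items (i) and (iii) of Proposition~\ref{prop:limlaplace} already establish that, along a subsequence and for a.e. $t \in [0,T]$, $u_N(t) \to u(t)$ weakly in $\W$ (via \eqref{converg:f}) and $u_N'(t) \to u'(t)$ strongly in $\L[2]$ (via \eqref{converg:e}). Weak lower semicontinuity of $\|\cdot\|_{1,p}$ together with the norm convergence of $u_N'(t)$ in $\L[2]$ yields $\E(t) \leq \liminf_N \epsilon_N(t)$ for a.e. $t \in [0,T]$, while weak lower semicontinuity of the $L^2(0,t; \H)$ norm applied to \eqref{converg:c} gives $\int_0^t \|u'(\tau)\|_{1,2}^2\,d\tau \leq \liminf_N \int_0^t \|u_N'(\tau)\|_{1,2}^2\,d\tau$. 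Passing to the limit inferior in the Galerkin identity therefore produces \eqref{energy-2ndid} for a.e. $t \in [0,T]$.

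To promote the inequality from a.e.\ $t$ to every $t \in [0,T]$, I invoke Proposition~\ref{prop:uinCw}: weak continuity of $t \mapsto u(t)$ in $\W$ and of $t \mapsto u'(t)$ in $\L[2]$, together with weak lower semicontinuity of the corresponding norms, shows that $t \mapsto \E(t)$ is lower semicontinuous. Meanwhile the dissipation and source integrals on the right are absolutely continuous in $t$ (the source integral by H\"older applied to $f(u) \in L^\infty(0,T; \Lb[4/3])$ and $u' \in L^2(0,T; \Lb[4])$). Selecting a sequence of admissible $t_n \to t$ at which the inequality already holds, and using lower semicontinuity at $t$ against continuity of the right-hand side, thus extends \eqref{energy-2ndid} to all $t \in [0,T]$.

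Finally, the equivalence with \eqref{energy-1stid} reduces to the boundary chain rule $\int_0^t \int_\Gamma f(u)u'\,dS\,d\tau = \int_\Gamma [F(u(t)) - F(u_0)]\,dS$. I will obtain this by passing to the limit in the identity $\int_0^t \int_\Gamma f(u_N)u_N'\,dS\,d\tau = \int_\Gamma F(u_N(t))\,dS - \int_\Gamma F(u_N(0))\,dS$, valid for each $u_N$ by virtue of its smoothness in time. The mean value bound $|F(a) - F(b)| \leq C(|a|^r + |b|^r + 1)|a - b|$ combined with H\"older exactly as in the proof of Lemma~\ref{lem:f-lipshitz} yields $F(u_N(t)) \to F(u(t))$ in $L^1(\Gamma)$, using the strong convergence $u_N \to u$ in $C([0,T]; W^{1-\epsilon,p}(\Omega))$ from \eqref{converg:d} and the trace embedding into $L^{4\alpha/3}(\Gamma)$. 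The main technical hurdle throughout is the careful bookkeeping of function-space pairings needed to handle the supercritical growth of $f$, but the function spaces identified in Lemma~\ref{lem:f-lipshitz} together with the compactness results of Corollary~\ref{cor:converg} are precisely calibrated for this purpose.
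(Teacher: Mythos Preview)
Your proof is correct and uses essentially the same ingredients as the paper: lower semicontinuity of norms for the left-hand side, convergence of $\int_\Gamma F(u_N)\,dS$ via the mean value theorem and \eqref{converg:d}, and convergence of the source integral $\int_0^t\int_\Gamma f(u_N)u_N'\,dSd\tau$. The only organizational difference is that you establish \eqref{energy-2ndid} first and then deduce \eqref{energy-1stid}, whereas the paper does the reverse; and for the source integral you use the strong--weak pairing $f(u_N)\to f(u)$ in $L^\infty(0,T;\Lb[4/3])$ against $u_N'\rightharpoonup u'$ in $L^2(0,T;\Lb[4])$, while the paper instead uses the \emph{strong} convergence $u_N'\to u'$ in $L^2(0,T;\W[1-\epsilon,2])$ from \eqref{converg:e}. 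Both routes are valid, and your explicit treatment of the passage from a.e.\ $t$ to all $t$ via Proposition~\ref{prop:uinCw} is in fact more careful than the paper's presentation.
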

\begin{proof}
Writing $F(u_N)=\int_0^{u_N}f(s)\,ds$ we have $\frac{d}{dt}F(u_N)=f(u_N)u_N'$ since each $u_N$ is sufficiently regular, so that 
\begin{align*}
	F(u_N(t))-F(u_N(0))=\int_0^t f(u_N)u_N'\,d\tau.
\end{align*}
From \eqref{apriori-3} we see that each $u_N$ satisfies \begin{align}\label{genapriori3}
	\epsilon_N(t) + \int_0^t ||u_N'(\tau)||_{1,2}^2\,d\tau \leq \int_0^t \int_\Gamma f(u_N(\tau))u_N'(\tau)\,dSd\tau + \epsilon_N(0)
\end{align}
with $\epsilon_N(t)=\frac{1}{2}||u_N'(t)||_2^2 + \frac{1}{p}||u_N(t)||_{1,p}^p$, 
so that  
\begin{align}\label{energy1}\rev
	\epsilon_N(t)+\int_0^t ||u_N'(\tau)||_{1,2}^2\,d\tau \leq \int_\Gamma \left( F(u_N(t)) - F(u_N(0))\right)\,dS +\epsilon_N(0).
\end{align}
By defining the energy 
\begin{align*}
E_N(t)=\epsilon_N(t)-\int_{\Gamma} F(u_N(t))\,dS
\end{align*}
we may then re-express \eqref{energy1} as 
\begin{align}\label{energy2}\rev
E_N(t)+\int_0^t ||u_N'(\tau)||_{1,2}^2 \,d\tau \leq E_N(0)\ptag{energy1}
\end{align}
A first step towards the desired inequality is the convergence of the terms $\int_{\Gamma}F(\cdot)\,dS$.

 From the mean value theorem and Remark~\ref{rmk:fbound} we know that 
\begin{align*}
	|F(u_N)-F(u)| = |f(\xi)||u_N-u| \leq C(1+|\xi|^r)|u_N-u|
\end{align*}
for some $\xi$ with $|\xi|\leq |u_N|+|u|$. Thus,
\begin{align}\label{energy-2.5}
\int_\Gamma |F(u_N)-F(u)|\,dS 
& \leq C\int_\Gamma (1+|u_N|^r + |u|^r)|u_N-u|\,dS\notag \\
& \leq  C(1+|u_N|_{4r/3}^r + |u|_{4r/3}^r)|u_N-u|_4\notag \\
& \leq C(1+||u_N||_{1-\epsilon,p}^r + ||u||_{1-\epsilon,p}^r)||u_N-u||_{1-\epsilon,p}
\end{align}
by H\"older's inequality and the trace mappings $\W[1-\epsilon,p]\tinto \Lb[4r/3]$ and $\W[1-\epsilon,p]\tinto\Lb[4]$. Since for any $t\in[0,T]$ we have $||u_N||_{1-\epsilon,p}$ and $||u||_{1-\epsilon,p}$ bounded and $||u_N-u||_{1-\epsilon,p}\to 0$ by virtue of \eqref{converg:d}, it follows that

\begin{align}\label{energy-F}
\lim_{N\to\infty}\int_\Gamma F(u_N(t))\,dS = \int_\Gamma F(u(t))\,dS;\quad t\in[0,T].
\end{align}
We may thus appeal to the lower semi-continuity of norms to conclude from \eqref{energy2} and \eqref{energy-F} that 
\begin{align*}
E(t) + \int_0^t ||u'(\tau)||_{1,2}^2\,d\tau &\leq 
\liminf_{N\to\infty }\left( E_N(t) + \int_0^t  ||u_N'(\tau)||_{1,2}^2\,d\tau \right) \notag\\
&\rev\leq  \liminf_{N\to\infty} E_N(0)
\end{align*}
with $E$ as in the statement of Theorem~\ref{thm:exist}. However, from the convergence in Proposition~\ref{prop:ICs} and \eqref{energy-F} at $t=0$ we obtain $\lim_{N\to\infty}E_N(0)=E(0)$ from which the desired energy inequality \eqref{energy-1stid} follows.

Finally we verify that, in fact,  
\begin{align}\label{energy-wts2}
\int_\Gamma F(u(t))\,dS-\int_\Gamma F(u(0))\,dS=\int_0^t\int_\Gamma f(u(\tau))u'(\tau)\,dSd\tau
\end{align} so that $u$ additionally satisfies the energy inequality \eqref{energy-2ndid} in Theorem~\ref{thm:exist}.  In considering \eqref{energy-F} we may demonstrate \eqref{energy-wts2} by equivalently showing that  
\begin{align*}
\lim_{N\to\infty} \int_0^t \int_\Gamma f(u_N(\tau))u_N'(\tau)\,dS d\tau = \int_0^t \int_\Gamma f(u(\tau))u'(\tau)\,dS d\tau.
\end{align*}
To see this, we first estimate  
\begin{multline}\label{energy-3}
\int_0^t \int_\Gamma \left|f(u_N(\tau))u_N'(\tau) - f(u(\tau))u'(\tau)\right|\,dSd\tau \\ \leq \underbrace{\int_0^t\int_\Gamma|f(u_N(\tau))||u_N'(\tau)-u'(\tau)|\,dSd\tau}_\text{(i)} \\
+  \underbrace{\int_0^t\int_\Gamma|u'(\tau)||f(u_N(\tau))-f(u(\tau))|\,dSd\tau}_\text{(ii)}.
\end{multline}
The first term in \eqref{energy-3} may controlled by H\"older's inequality with conjugate exponents $\alpha =4/(1+2\epsilon)$ and $\alpha'=4/(3-2\epsilon)$ yielding 
\begin{align*}
\text{(i)}\leq t\sup_{\tau\in[0,t]}|f(u_N(\tau))|_{\alpha'}||u_N'(\tau)-u'(\tau)||_{1-\epsilon,2} 
\end{align*}
from the trace mapping $\W[1-\epsilon,2]\tinto \Lb[\alpha]$. Since $\alpha'$ decreases to $4/3$ as $\epsilon$ decreases to zero we may select $\epsilon>0$ sufficiently small so that $\alpha'<3/2$. From the restrictions on $r$ given in \eqref{ass:f} we thus obtain $r\alpha'<2p/(3-p)$ and hence $|f(u_N)|_{\alpha'}\leq C(1+|u_N|_{r\alpha'}^r)$ is bounded on $[0,t]$ by \eqref{apriori-a}. Since $||u_N'(\tau)-u'(\tau)||_{1-\epsilon,2}\to 0$ for all $\tau\in[0,t]$ by \eqref{converg:d} we thus obtain $\text{(i)}\to 0$ as $N\to\infty$.

For the second term in \eqref{energy-3} H\"older's inequality and Lemma~\ref{lem:f-lipshitz} yield 
\begin{align*}
\text{(ii)} &\leq \int_0^t |u'(\tau)|_4 |f(u_N(\tau)) - f(u(\tau))|_{4/3}\,d\tau\\
&\leq C\sup_{\tau\in[0,t]}||u_N(\tau)-u(\tau)||_{1-\epsilon,p}||u'||_{L^1(0,t;\Lb[4])},
\end{align*}
whereby $\text{(ii)}\to 0$ as $N\to\infty$ from \eqref{converg:d} and the fact that $u'\in L^2(0,T;\W[1,2])\into L^1(0,t;\Lb[4])$ from \eqref{converg:c}. This establishes \eqref{energy-wts2} and completes
the proof.
\end{proof}

\section{Solutions for other sources}\label{S3}
In order to extend the results from Section~\ref{S2} to more general sources we shall employ standard truncation arguments similar to \cite{BL1, CEL1, PRT-p-Laplacain, RW}. As in these papers, a key step is obtaining solutions in the case where $f:\W\to\Lb[2]$ is locally Lipschitz with care taken to bound the interval of existence independent from this particular Lipschitz constant. 
\subsection{Locally Lipschitz sources}\label{S3.1}
\begin{prop}\label{prop:ll}
	Assume that $f:\W\to\Lb[2]$ is locally Lipschitz continuous. From the initial data $(u_0,u_1)$ we may find a constant $K$ so that problem \eqref{wave} possesses a local solution in the sense of Definition~\ref{def:weaksln} on an interval $[0,T]$ whose length is dependent only upon $K$ and the local Lipschitz constant of $f:\W\to\Lb[4/3]$ on the ball of radius $K$ about zero in $\W$. Further, $||u(t)||_{1,p}\leq K$ on $[0,T]$. 
\end{prop}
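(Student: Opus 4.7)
The plan is a truncation argument reducing to the globally Lipschitz case of Section~\ref{S2}: construct a surrogate $\tilde f:\W\to\Lb[2]$ that is globally Lipschitz and coincides with $f$ on a ball in $\W$, invoke Section~\ref{S2} for the truncated problem, and use the energy inequality~\eqref{energy-1stid}---with the source integral estimated via the $\Lb[4/3]$ Lipschitz data alone---to trap the solution inside that ball on a controlled time interval, on which it solves the original problem \eqref{wave}.

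\medskip
First I would fix $K>0$ large enough that $(K/2)^p>p\,\E(0)$, where $\E(0)=\tfrac12\|u_1\|_2^2+\tfrac1p\|u_0\|_{1,p}^p$; this is determined solely by the initial data. Let $L_K$ denote the local Lipschitz constant of $f:\W\to\Lb[4/3]$ on $B_K$, the closed ball of radius $K$ in $\W$; this is finite by hypothesis (and Lemma~\ref{lem:f-lipshitz} with $\epsilon=0$). Pick a smooth cutoff $\eta:[0,\infty)\to[0,1]$ with $\eta\equiv 1$ on $[0,K/2]$, $\eta\equiv 0$ on $[K,\infty)$, $|\eta'|\le C/K$, and set
\[
\tilde f(u)=\eta(\|u\|_{1,p})\,f(u),\qquad u\in\W.
\]
The local Lipschitz continuity of $f:\W\to\Lb[2]$ on $B_K$, together with the bound on $\eta'$, yields that $\tilde f:\W\to\Lb[2]$ is \emph{globally} Lipschitz, and by construction $\tilde f\equiv f$ on $\{\|u\|_{1,p}\le K/2\}$. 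Section~\ref{S2}, applied with $\tilde f$ in place of $f$, then furnishes a weak solution $\tilde u$ on any $[0,T']$ satisfying the energy inequality
\[
\tfrac12\|\tilde u'(t)\|_2^2+\tfrac1p\|\tilde u(t)\|_{1,p}^p+\int_0^t\|\tilde u'(\tau)\|_{1,2}^2\,d\tau \le \E(0)+\int_0^t\!\!\int_\G\tilde f(\tilde u)\,\tilde u'\,dS\,d\tau.
\]

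\medskip
The crux of the argument is to bound the source integral using $L_K$ alone. By H\"older's inequality with the conjugate pair $(4/3,4)$ on $\Gamma$, the trace embedding $\W[1,2]\tinto\Lb[4]$, and Young's inequality,
\[
\int_\G \tilde f(\tilde u)\,\tilde u'\,dS \le \tfrac12\|\tilde u'\|_{1,2}^2 + C\,|\tilde f(\tilde u)|_{4/3}^2.
\]
Since $\tilde f$ is supported where $\|u\|_{1,p}\le K$ and coincides with $f$ on $B_{K/2}$,
\[
|\tilde f(\tilde u(t))|_{4/3}\le L_K\,\|\tilde u(t)\|_{1,p}+|f(0)|_{4/3}\le L_K K+|f(0)|_{4/3}=:M_0
\]
uniformly in $t$. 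Absorbing the first term into the damping integral, we obtain
\[
\tfrac12\|\tilde u'(t)\|_2^2+\tfrac1p\|\tilde u(t)\|_{1,p}^p \le \E(0)+CM_0^2\,t,
\]
so the choice
\[
T=\frac{1}{CM_0^2}\!\left(\frac{(K/2)^p}{p}-\E(0)\right)>0,
\]
depending only on $K$, $L_K$, $|f(0)|_{4/3}$, and the initial data, ensures $\|\tilde u(t)\|_{1,p}\le K/2<K$ on $[0,T]$. On this interval $\tilde f(\tilde u)=f(\tilde u)$, whence $\tilde u$ is a weak solution of \eqref{wave} in the sense of Definition~\ref{def:weaksln} with the asserted bound.

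\medskip
The main technical obstacle I anticipate is designing $\tilde f$ so that Section~\ref{S2} applies verbatim---in particular, so that the pass-to-the-limit Proposition~\ref{prop:limf}, which rests on Lemma~\ref{lem:f-lipshitz} with $\epsilon>0$, remains available for $\tilde f$. A cutoff in $\|u\|_{1,p}$ is natural for the energy step but slightly awkward at the limit step because $\|u_N(t)\|_{1,p}$ is only known to be bounded, not convergent along the Galerkin sequence; one remedy is to instead localize via $\|u\|_{1-\epsilon,p}$, for which strong convergence is furnished by \eqref{converg:d}, at the modest cost of enlarging $K$. Once $\tilde f$ is correctly set up, the energy inequality, the H\"older--trace--Young chain, and the choice of $T$ all go through routinely.
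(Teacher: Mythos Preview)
Your overall plan coincides with the paper's: truncate $f$ to a globally Lipschitz map $\W\to\Lb[2]$, invoke Section~\ref{S2}, and then use the energy inequality~\eqref{energy-2ndid} with only the $\Lb[4/3]$ Lipschitz data to confine the solution to the ball where the truncation is inactive. The differences are cosmetic. The paper truncates via the radial retraction $f_K(u)=f\bigl(Ku/\|u\|_{1,p}\bigr)$ for $\|u\|_{1,p}>K$ (Lemma~\ref{lem:ll-trunc}) rather than your multiplicative cutoff $\eta(\|u\|_{1,p})f(u)$, and in the energy step it keeps the linear growth $|f_K(u_K)|_{4/3}\le C(1+\|u_K\|_{1,p})$ and closes by Gronwall, whereas your cutoff yields the uniform bound $|\tilde f(\tilde u)|_{4/3}\le M_0$ and a direct linear-in-$t$ estimate on $\E(t)$; this is a small simplification but not a different route.

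Your last-paragraph concern about feeding the truncated source through Proposition~\ref{prop:limf} is legitimate and applies verbatim to the paper's $f_K$, which likewise depends on $\|u\|_{1,p}$ and is not a Nemytskii operator; the paper simply invokes ``the results from Section~\ref{S2}'' for $f_K$ without revisiting that step. Your suggested fix of cutting off in $\|u\|_{1-\epsilon,p}$ does repair the limit step via~\eqref{converg:d}, but note that then the \emph{global} Lipschitz property into $\Lb[2]$ needed for the a~priori estimates of Proposition~\ref{prop:apriori} would require control of $f$ on $W^{1-\epsilon,p}$-balls rather than $\W$-balls, which is slightly more than the stated hypothesis of the Proposition (though it does follow from~\eqref{ass:f} in the regime where Proposition~\ref{prop:ll} is actually applied).
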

\begin{proof}
	For a constant $K>0$ define 
	\begin{align*}
	f_K(u)=\begin{cases}
	f(u),&\text{ for }||u||_{1,p} \leq K\\
	f\left(\frac{Ku}{||u||_{1,p}}\right),&\text{ for }|| u||_{1,p} >K
	\end{cases}
	\end{align*}
	and the corresponding $K$ problem
	\begin{align}
	\label{ll-k}
	\begin{cases}
	u_{tt}-\Delta_p u -\Delta u_t = 0 &\text{ in } \Omega \times (0,T),\\[.1in]
	(u(0),u_t(0))=(u_0,u_1),\\[.1in]
	|\grad u|^{p-2}\partial_\nu u + |u|^{p-2}u + \partial_\nu u_t + u_t = f_K(u)&\text{ on }\Gamma \times(0,T).
	\end{cases}
	\end{align}
	While it is readily verified that each $f_K$ is globally Lipschitz continuous as a map from $\W$ into $\Lb[2]$ a proof is provided in Lemma~\ref{lem:ll-trunc} in the Appendix, as most references in the literature (e.g. \cite{CEL1,GR,RW}) assume a Hilbert space structure on the domain not available here. Following from this, the results from Section~\ref{S2} provide a weak solution $u_K$ of each problem \eqref{ll-k} on an interval $[0,T]$ satisfying the energy identity \eqref{energy-2ndid}. (In fact, each $u_K$ exists on $[0,\infty)$ as $T$ is arbitrary for the globally Lipschitz source $f_K$.) Since $f_K:\W \to\Lb[4/3]$ is likewise globally Lipschitz continuous, we may estimate with H\"older and Young's inequalities
	\begin{align*}
		\int_\Gamma f_K(u_K)u_K'\,dS &\leq C_\epsilon  (1+|f_K(u_K)|_{4/3})^2 + \epsilon |u_K'|_4^2\\
		&\leq C_\epsilon (1+||u_K||_{1,p})^2 + C\epsilon||u_K'||_{1,2}^2\\
		&\leq C_K(1+||u_K||_{1,p}^p) + \frac{1}{2}||u_K'||_{1,2}^2.
	\end{align*}
	for a suitable choice of $\epsilon$ and a constant $C_K$ dependent only upon the Lipschitz constant of $f$ as a map into $\Lb[4/3]$. That is, 
	\begin{align}\label{ll-2}
	\int_0^t \int_{\Gamma} f_K(u_K)u_K'\,dSd\tau \leq C_K \int_0^t (1+p\E(\tau))\,d\tau + \frac{1}{2}\int_0^t ||u_K'||_{1,2}^2\,d\tau.
	\end{align}
	In applying \eqref{ll-2} to the energy inequality \eqref{energy-2ndid} we thus obtain 
	\begin{align}
	\E(t) + \frac{1}{2}\int_0^t ||u_K'(\tau)||_{1,2}^2 \leq \E(0) + C_K\int_0^t (1+p\E(\tau))\,d\tau.
	\end{align}
	Gronwall and Young's inequalities now imply that 
	\begin{align*}
		2\E(t)\leq 2(\E(0)+C_Kt)\exp(pC_kt)\leq (\E(0)+C_kt)^2 + \exp(2pC_kt)
	\end{align*} whereupon selecting a fixed $K$ with $K^{p/2}>\sqrt{2p}\E(0)$,  $K^p >2p$,  and 
	\begin{align*}\rev
	T_0=\min\left\{\frac{1}{\sqrt{2p}C_K}(K^{p/2}-\sqrt{2p}\E(0)),\, \frac{1}{2pC_K}(\ln(K^p)-\ln(2p))  \right\}
	\end{align*}
	we obtain $\E(t)\leq \frac{K^p}{p}$ on $[0,T_0]$.  As such, 
	\begin{align}
	||u_k||_{1,p}\leq \left(p\E(t)\right)^{1/p}\leq K
	\end{align}
	on $[0,T_0]$ so that $f_K=f$ on this interval.  The solution $u_K$ of \eqref{ll-k} is therefore, in fact, a solution of the original problem \eqref{wave} in the full sense of Definition~\ref{def:weaksln} on  $[0,T_0]$ as prescribed in the statement of the proposition. Moreover, this solution verifies the energy inequalities  detailed in Section~\ref{s1-energy}.
\end{proof}

\subsection{General sources}\label{S3.2}
In order to establish the existence of solutions for more general sources satisfying \eqref{ass:f} we employ another truncation argument as in \cite{PRT-p-Laplacain, RW}.  To begin, select as in \cite{Radu1} a sequence $\{\eta_n\}\subset C^\infty(\R)$ of cutoff functions such that 
\begin{align*}
0\leq\eta_n\leq 1,\quad |\eta_n'(s)|\leq \frac{C}{n},\quad\text{and }
\begin{cases}
 \eta_n(s)=1,&\text{ for }|s|\leq n,\\
 \eta_n(s)=0,&\text{ for }|s|>2n
\end{cases}
\end{align*}
for some constant $C$ independent from $n$ and define 
\begin{align}\label{sc-f}
f_n(u)=f(u)\eta_n(u).
\end{align}
In order to leverage the results of Section~\ref{S3.1} it must be shown that each $f_n$ is locally Lipschitz continuous as a map into $\Lb[2]$ and exhibit a bound on the magnitude of the Lipschitz constants of each $f_n$ as a map into $\Lb[4/3]$ which is uniform in $n$.  

\begin{lem}\label{lem:sc-lip} Each $f_n$ given by \eqref{sc-f} satisfies
	\begin{enumerate}[(i)]
		\item\label{sc-lip-1} $f_n:\W \to \Lb[2]$ is globally Lipschitz continuous;
		\item\label{sc-lip-2} $f_n:\W[1-\epsilon,p] \to \Lb[4/3]$ is locally Lipschitz continuous with a Lipschitz  constant \underbar{independent} of $n$ on any ball of radius $R$ about $0$ in $\W[1-\epsilon,p]$.
	\end{enumerate}
\end{lem}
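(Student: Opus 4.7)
The plan is to handle (i) by showing that $f_n:\R\to\R$ is itself a bounded, globally Lipschitz scalar function (with Lipschitz constant possibly depending on $n$) and then composing with the trace, and to handle (ii) through the algebraic observation that the contribution of $\eta_n'$ to $f_n'$ is dominated, in a uniform fashion, by the same polynomial bound that governs $f'$ itself. Once that pointwise uniform bound is in hand, the argument from Lemma~\ref{lem:f-lipshitz} can be rerun \emph{verbatim} to yield a Lipschitz constant independent of $n$.

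For (i), differentiating $f_n(s)=f(s)\eta_n(s)$ gives $f_n'(s)=f'(s)\eta_n(s)+f(s)\eta_n'(s)$, which vanishes for $|s|>2n$ and is bounded there by
\[
L_n:=\sup_{|s|\leq 2n}|f'(s)|+\tfrac{C}{n}\sup_{|s|\leq 2n}|f(s)|<\infty,
\]
using \eqref{ass:f} and Remark~\ref{rmk:fbound}. Hence $f_n:\R\to\R$ is globally $L_n$-Lipschitz, and since $2p/(3-p)\geq 2$ for $p\in(2,3)$ the trace embedding $\W\tinto\Lb[2]$ yields $|f_n(u)-f_n(v)|_2\leq C L_n\|u-v\|_{1,p}$ for every $u,v\in\W$, as desired.

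For (ii), applying the mean value theorem to $f_n$ gives $|f_n(u)-f_n(v)|\leq |f_n'(\xi)|\,|u-v|$ for some $\xi$ between $u$ and $v$. The first piece of $f_n'(\xi)$ is bounded as $|f'(\xi)\eta_n(\xi)|\leq|f'(\xi)|\leq C(|\xi|^{r-1}+1)$ by \eqref{ass:f}. The critical observation is that $\eta_n'(\xi)\neq 0$ only on $n\leq|\xi|\leq 2n$, so $\tfrac{1}{n}\leq\tfrac{2}{|\xi|}$ there; combined with $|\xi|\geq n\geq 1$, this yields
\[
|f(\xi)\eta_n'(\xi)|\leq \tfrac{C}{n}(|\xi|^r+1)\leq C(|\xi|^{r-1}+1)
\]
uniformly in $n\geq 1$. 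Since $|\xi|\leq|u|+|v|$ and $r\geq 1$, we conclude the pointwise bound
\[
|f_n(u)-f_n(v)|\leq C\bigl(|u|^{r-1}+|v|^{r-1}+1\bigr)|u-v|
\]
with $C$ independent of $n$, which is precisely the starting estimate in the proof of Lemma~\ref{lem:f-lipshitz}. Rerunning the H\"older and trace computation from that lemma with $\alpha=\tfrac{3}{4}\tfrac{2p}{3-(1-\epsilon)p}$ and $\epsilon$ chosen so that $r<\alpha$ then produces $|f_n(u)-f_n(v)|_{4/3}\leq C_R\|u-v\|_{1-\epsilon,p}$ on the ball of radius $R$, with $C_R$ depending only on $R$.

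The main obstacle is the $n$-uniformity required in (ii): a naive estimate on $\|f_n'\|_\infty$ grows like $n^{r-1}$ and would be useless for the limiting argument in Section~\ref{S3.2}. The saving point is the geometric constraint $n\leq|\xi|\leq 2n$ on the support of $\eta_n'$, which permits the $1/n$ factor to be traded for $1/|\xi|$ and absorbed into one less power of $|\xi|$, thereby restoring exactly the polynomial growth rate exhibited by $f'$ itself. After this observation no further work beyond quoting Lemma~\ref{lem:f-lipshitz} is required.
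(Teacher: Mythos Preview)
Your proof is correct and in fact streamlines the paper's argument. The paper proceeds by partitioning $\Gamma$ into four regions $\Gamma_1,\dots,\Gamma_4$ according to whether $|u(x)|$ and $|v(x)|$ exceed $2n$, and then estimates separately on each region; for part~(ii) it starts from the product-rule splitting $|f_n(u)-f_n(v)|\leq |\eta_n(u)||f(u)-f(v)|+|f(v)||\eta_n(u)-\eta_n(v)|$ and uses the region decomposition to convert the $n^{-1}$ from $\eta_n'$ into a negative power of $|v|$. You instead bound the scalar derivative $f_n'$ directly and observe that on $\operatorname{supp}\eta_n'\subset\{n\leq|s|\leq 2n\}$ the factor $1/n$ can be replaced by $2/|s|$, yielding the uniform pointwise estimate $|f_n'(s)|\leq C(|s|^{r-1}+1)$ with $C$ independent of $n$. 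This single observation lets you invoke Lemma~\ref{lem:f-lipshitz} verbatim rather than redoing the H\"older and trace calculation, and it dispenses with the region decomposition entirely. The underlying mechanism---trading $1/n$ for $1/|\xi|$ on the annular support of $\eta_n'$---is the same in both proofs, but your packaging via the mean value theorem applied to $f_n$ itself (rather than to $f$ and $\eta_n$ separately) is more economical.
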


The proof of this lemma is very similar in structure to \cite[Lem. 2.4]{RW} but sufficiently different due to the boundary terms to merit a proof which is presented in the Appendix.

  In light of the previous lemma we are now able to utilize the results of Section~\ref{S3.1} to produce a sequence $\{u^n\}$ of solutions to the approximated $n$th problem 	
  \begin{align}
  \label{sc-n}
  \begin{cases}
  u^n_{tt}-\Delta_p u^n -\Delta u^n_t = 0 &\text{ in } \Omega \times (0,T),\\[.1in]
  (u^n(0),u^n_t(0))=(u_0,u_1),\\[.1in]
  |\grad u^n|^{p-2}\partial_\nu u^n + |u^n|^{p-2}u^n + \partial_\nu u^n_t + u^n_t = f_n(u^n)&\text{ on }\Gamma \times(0,T).
  \end{cases}
  \end{align}
  in the sense of Definition~\ref{def:weaksln} on a non-degenerate interval $[0,T]$. (This can be done precisely because Lemma~\ref{lem:sc-lip} asserts that each $f_n$ given by \eqref{sc-f} satisfies the requirements of Proposition~\ref{prop:ll} with local Lipschitz constants as maps into $\Lb[4/3]$ independent of $n$.)  Further, since each $u^n$ satisfies the energy inequality 
  \begin{align}\label{generalized-apriori-3}
   \E^n(t)+\int_0^t ||u^n_t(\tau)||_{1,2}^2\,d\tau \leq \E^n(0) + \int_0^t\int_{\Gamma} f^n(u^n)u^n_t\,dSd\tau
  \end{align}
  with 
  \begin{align*}
  \E^n(t)=\frac{1}{2}||u_t^n(t)||_2^2 + \frac{1}{p}||u^n(t)||_{1,p}^p
  \end{align*}
   we may produce as in the a priori estimates in Proposition~\ref{prop:apriori} the bound $\E^n(t)\leq C$ on $[0,T]$ by virtue of the fact that $||u(t)||_{1,p}\leq K$ on $[0,T]$ from Proposition~\ref{prop:ll}.   Correspondingly, we conclude that 
   \begin{subequations}
   \begin{align}
   \{u^n\}\text{ is a bounded sequence in }&L^\infty(0,T;\W), \\
   \{u_t^n\}\text{ is a bounded sequence in }&L^\infty(0,T;\L[2]), \\
   \{u_t^n\}\text{ is a bounded sequence in }&L^2(0,T;\W[1,2]). 
   \end{align}
   Further, since each $u^n$ must in particular satisfy \eqref{slnid} we have at each $\phi\in\W$ that 
   \begin{align*}
   |\langle u^n_{tt}(t),\phi\rangle_p| &= \left|\frac{d}{dt}(u^n_t(t),\phi)_\Omega\right|\\
   &\leq |\langle -\Delta_p u^n(t),\phi\rangle_p| + |(\grad u^n_t(t),\grad\phi)_\Omega| + |(u^n_t(t),\phi)_\Gamma| + |(f_n(u^n(t)),\phi)_\Gamma|\\
   &\leq \left( ||u^n(t)||_{1,p}^{p-1} + ||u_t^n(t)||_{1,p'}\right)||\phi||_{1,p} + |f_n(u^n(t))|_{4/3}|\phi|_{4}   
	\end{align*}
   by H\"older's inequality with $p'$ conjugate to $p$. Since $p'<2$ and $|f_n(u^n(t))|_{4/3}\leq C(1+||u^n(t)||_{1,p})$ given that $f^n$ is locally Lipschitz as in Lemma~\ref{lem:sc-lip} Item \eqref{sc-lip-2} we may thus additionally conclude that 
   \begin{align}
   \{u^n_{tt}\}\text{ is a bounded sequence in }L^2(0,T,(\W)').
   \end{align}
   \end{subequations}
   As was the case in Corollary~\ref{cor:converg}, the standard compactness results now imply the following:
   \begin{cor}\label{cor:sc-converg} Up to a subsequence, the sequence of solutions $\{u^n\}$ of \eqref{sc-n} satisfies 
  \begin{subequations}
  \begin{alignat}{2}
                 u^n&\to u&\text{ weak* in }&L^\infty(0,T;\W),\label{sc-converg-a}\\
	   	u^n_t &\to u_t&\text{ weak* in }&L^\infty(0,T;\L[2]),\label{sc-converg-b}\\
	   	u^n_t &\to u_t&\text{ weakly in }&L^2(0,T;\W[1,2])\label{sc-converg-c}\\
	   	u^n &\to u&\text{ strongly in }&C([0,T];\W[1-\epsilon,p]),\label{sc-converg-d}\\
	   	u^n_t &\to u_t&\text{ strongly in }&L^2(0,T;\W[1-\epsilon,2]),\label{sc-converg-e}\\
	  	u^n_{tt} &\to u_{tt}&\text{ weakly in }&L^2(0,T;(\W[1,p])').\label{sc-converg-f}  
   \end{alignat}
	\end{subequations}
   \end{cor}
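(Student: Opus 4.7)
The plan is to mirror exactly the argument used to obtain Corollary~\ref{cor:converg}, replacing the sequence $\{u_N\}$ of Galerkin approximations with the sequence $\{u^n\}$ of truncated-source solutions. Since the four a priori bounds preceding the statement (in $L^\infty(0,T;\W)$, $L^\infty(0,T;\L[2])$, $L^2(0,T;\H[1])$, and $L^2(0,T;(\W)')$ for $u^n$, $u^n_t$, $u^n_t$, $u^n_{tt}$ respectively) are structurally identical to those established in Proposition~\ref{prop:apriori} for the Galerkin sequence, no new analytic input is required and the argument reduces to invoking two standard tools.

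First I would apply the Banach--Alaoglu theorem separately in each of the four reflexive/dual spaces listed above. Since $L^\infty(0,T;\W)$ and $L^\infty(0,T;\L[2])$ are duals of separable Banach spaces, bounded sequences admit weak$^*$ convergent subsequences, yielding \eqref{sc-converg-a} and \eqref{sc-converg-b} along a common subsequence (still denoted $\{u^n\}$); reflexivity of $L^2(0,T;\W[1,2])$ and $L^2(0,T;(\W)')$ then gives \eqref{sc-converg-c} and \eqref{sc-converg-f}. A routine diagonal extraction guarantees that the limits are compatible, i.e. that the weak$^*$ limit of $u^n_t$ and the weak limit of $u^n_t$ coincide (and similarly that $u_{tt}$ is the distributional time derivative of $u_t$).

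For the strong convergences \eqref{sc-converg-d} and \eqref{sc-converg-e} I would invoke the Aubin--Lions--Simon compactness theorem (in the form of \cite[Thm.~II.5.16]{Boyer2013}, already used for Corollary~\ref{cor:converg}). For \eqref{sc-converg-d}, using the compact embedding $\W \Subset \W[1-\epsilon,p]$ valid for every sufficiently small $\epsilon>0$, together with the continuous embedding $\W[1-\epsilon,p]\hookrightarrow(\W)'$, the $L^\infty(0,T;\W)$-bound on $u^n$ and the $L^2(0,T;(\W)')$-bound on $u^n_{tt}$ deliver relative compactness of $\{u^n\}$ in $C([0,T];\W[1-\epsilon,p])$. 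For \eqref{sc-converg-e}, the compact embedding $\H[1]\Subset\W[1-\epsilon,2]$ combined with the continuous embedding $\W[1-\epsilon,2]\hookrightarrow(\W)'$, the $L^2(0,T;\H[1])$-bound on $u^n_t$, and the $L^2(0,T;(\W)')$-bound on $u^n_{tt}$ provide relative compactness of $\{u^n_t\}$ in $L^2(0,T;\W[1-\epsilon,2])$. Extracting a final common subsequence, the strong limits must agree with the weak limits $u$ and $u_t$ previously identified.

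The only subtle point is bookkeeping: one must extract a single subsequence along which all six convergences hold simultaneously, which is handled by the standard diagonal procedure and the uniqueness of weak limits. No genuine obstacle arises here; the result is an immediate transcription of the compactness machinery already deployed in Section~\ref{S2}.
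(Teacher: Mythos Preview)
Your approach is exactly the paper's: the paper states this corollary without proof, remarking only that ``as was the case in Corollary~\ref{cor:converg}, the standard compactness results now imply the following,'' and Corollary~\ref{cor:converg} itself is justified by a one-line appeal to Banach--Alaoglu together with Aubin--Lions--Simon (\cite[Thm.~II.5.16]{Boyer2013}). Your write-up supplies more detail than the paper does and is essentially correct.

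There is one small bookkeeping slip worth fixing. In your justification of \eqref{sc-converg-d} you invoke the $L^2(0,T;(\W)')$-bound on $u^n_{tt}$, but Aubin--Lions--Simon applied to the sequence $\{u^n\}$ requires a bound on its \emph{first} time derivative $u^n_t$, not on $u^n_{tt}$. The needed ingredient is already available---either the $L^\infty(0,T;\L[2])$-bound or the $L^2(0,T;\H[1])$-bound on $u^n_t$ works, since $\W[1-\epsilon,p]\hookrightarrow \L[2]$ continuously---so the argument goes through once you swap in the correct derivative.
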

   
 Following in line with the commentary at the beginning of Section~\ref{S2.4} we shall now demonstrate that sequence $\{u^n\}$ and the limit function $u$ identified in Corollary~\ref{cor:sc-converg} are a solution to \eqref{wave} in the sense of Definition~\ref{def:weaksln} by utilizing many of the same arguments.  In order to do so we must establish an analogue of Proposition~\ref{prop:limf} and show that the results of Propositions~\ref{prop:ICs}, \ref{prop:dampconverg}, and \ref{prop:limlaplace} apply. 
   	
   One obvious difference between the solutions $\{u^n\}$ obtained here is that they verify the identity 
   \begin{multline}\label{sc-slnid}
   	(u^n_t(t),\phi(t))_\O - (u_1,\phi(0))_\O - \int_0^t (u^n_t(\tau),\phi_t(\tau))_\O\,d\tau 
   	+ \int_0^t \langle -\Delta_p u^n(\tau),\phi(\tau)\rangle_p\,d\tau \\
   	+ \int_0^t \langle -\Delta_2 u^n_t(\tau),\phi(\tau)\rangle_2\,d\tau 
   	= \int_0^t \int_\G f_n(u^n(\tau))\phi(\tau)\,dSd\tau 
   \end{multline}
for all test functions $\phi\in C_w([0,T];\W)$ with $\phi_t\in L^2(0,T;\W[1,2])$ in lieu of a fixed source $f$.  The following proposition (an analogue of Proposition~\ref{prop:limf}) assures that this key difference is of no practical consequence. 

\begin{prop}[c.f. Prop.~\ref{prop:limf}]\label{prop:sc-f} The sequence of solutions of problem \eqref{sc-f} satisfies
	\begin{align*}
	f_n(u^n)\to f(u)\text{ strongly in }L^\infty(0,T;\Lb[4/3])
	\end{align*}
\end{prop}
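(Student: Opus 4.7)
The natural approach is to split
\begin{align*}
f_n(u^n(t))-f(u(t))=\bigl[f_n(u^n(t))-f_n(u(t))\bigr]+\bigl[f_n(u(t))-f(u(t))\bigr]
\end{align*}
and estimate each piece in $L^{4/3}(\Gamma)$ uniformly in $t\in[0,T]$. For the first bracket I would appeal directly to Lemma~\ref{lem:sc-lip}\eqref{sc-lip-2}: since $\{u^n\}$ is bounded in $L^\infty(0,T;\W)$ and $\W\into \W[1-\epsilon,p]$, both $u^n(t)$ and $u(t)$ lie in a common ball of $\W[1-\epsilon,p]$ of some radius $R$ independent of $n$ and $t$. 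The local Lipschitz constant of $f_n:\W[1-\epsilon,p]\to\Lb[4/3]$ on that ball is independent of $n$, so
\begin{align*}
|f_n(u^n(t))-f_n(u(t))|_{4/3}\leq C_R\,\|u^n(t)-u(t)\|_{1-\epsilon,p},
\end{align*}
and the right-hand side tends to zero uniformly in $t$ by \eqref{sc-converg-d}.

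For the second bracket, the cutoff structure gives $f_n(u(t))-f(u(t))=0$ on $\{|u(t)|\leq n\}$ and $|f_n(u(t))-f(u(t))|\leq 2|f(u(t))|\leq C(|u(t)|^r+1)$ on $\{|u(t)|>n\}$. Therefore
\begin{align*}
|f_n(u(t))-f(u(t))|_{4/3}^{4/3}\leq C\int_{\{|u(t)|>n\}}\bigl(|u(t)|^{4r/3}+1\bigr)\,dS.
\end{align*}
Since $u\in C([0,T];\W[1-\epsilon,p])$ and the trace $\W[1-\epsilon,p]\tinto \Lb[4r/3]$ is continuous (indeed compact) for sufficiently small $\epsilon>0$ by \eqref{ass:f}, the set $\{u(t):t\in[0,T]\}$ is compact, hence equi-integrable, as a subset of $\Lb[4r/3]$. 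Combined with the uniform surface-measure bound $|\{|u(t)|>n\}|\leq n^{-4r/3}\sup_{t\in[0,T]}|u(t)|_{4r/3}^{4r/3}\to 0$ from Chebyshev, equi-integrability yields
\begin{align*}
\sup_{t\in[0,T]}\int_{\{|u(t)|>n\}}\bigl(|u(t)|^{4r/3}+1\bigr)\,dS\to 0\quad\text{as }n\to\infty.
\end{align*}

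Combining the two bounds gives $\sup_{t\in[0,T]}|f_n(u^n(t))-f(u(t))|_{4/3}\to 0$, which is the claim. The main obstacle is the second term: standard applications of dominated convergence only give pointwise-in-$t$ convergence, and promoting this to uniform-in-$t$ convergence requires exploiting the compactness of the orbit $\{u(t)\}_{t\in[0,T]}$ in a trace space where $|u|^r$ is integrable. This is precisely what the strong convergence in $C([0,T];\W[1-\epsilon,p])$ and the compact trace embedding into $\Lb[4r/3]$ (valid because $r<4p/(3(3-p))$) provide.
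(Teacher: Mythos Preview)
Your argument is correct and follows the same two-term decomposition as the paper. Your treatment of the first bracket via Lemma~\ref{lem:sc-lip}\eqref{sc-lip-2} and \eqref{sc-converg-d} is identical to the paper's.

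For the second bracket the paper is briefer: it simply observes $|f_n(u(t))-f(u(t))|\leq 2|f(u(t))|$ with $|f(u(t))|_{4/3}<\infty$ and invokes dominated convergence to get $|f_n(u(t))-f(u(t))|_{4/3}\to 0$. As you correctly note, this yields only pointwise-in-$t$ convergence, whereas the statement asserts convergence in $L^\infty(0,T;\Lb[4/3])$. Your compactness/equi-integrability argument---using $u\in C([0,T];\W[1-\epsilon,p])$ so that the orbit $\{u(t)\}_{t\in[0,T]}$ is compact (hence equi-integrable) in $\Lb[4r/3]$, together with the uniform Chebyshev bound on $|\{|u(t)|>n\}|$---supplies exactly the uniformity in $t$ that the paper's appeal to dominated convergence leaves implicit. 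In this sense your proof is the more complete of the two.
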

\begin{proof}
From the definition of $f_n$ in (\ref{sc-f}) we have
\begin{align}\label{sc-f-1}
|f_n(u^n(t)) - f(u(t))|_{4/3} &\leq \underbrace{|f_n(u^n(t))-f_n(u(t))|_{4/3}}_\text{(i)} + \underbrace{|f_n(u(t))-f(u(t))|_{4/3}}_\text{(ii)}.
\end{align}
For $\text{(i)}$, the sequence $\{||u^n(t)||_{1-\epsilon,p}\}$ is bounded (say by $R$) uniformly on $[0,T]$ by virtue of \eqref{sc-converg-d}, so that by the local Lipschitz continuity of $f_n$ in Lemma~\ref{lem:sc-lip} we have $\text{(i)}\leq C_R ||u_n(t)-u(t)||_{1-\epsilon,p}$ for all $t\in[0,T]$ with the Lipschitz constant $C_R$ independent of $n$.  Taking the limit as $n\to\infty$ we thus obtain from \eqref{sc-converg-d} that
\begin{align}\label{sc-f-2}
\text{(i)}\leq C_R ||u_n(t)-u(t)||_{1-\epsilon,p}\to 0
\end{align}
for all $t\in[0,T]$.
For $\text{(ii)}$ it is clear that $f_n(u(t))\to f(u(t))$ pointwise almost everywhere on $\G$, so that from the bound
\begin{align*}
|f_n(u(t))-f(u(t))| = |\eta_n(u(t)))-1||f(u(t))|\leq 2|f(u(t))|
\end{align*}
with $|f(u(t))|_{4/3}\leq C(||u(t)||_{1-\epsilon,p}+1)$ by Lemma~\ref{lem:f-lipshitz}. Thus,  $|f(u(t))|_{4/3}<\infty$, and by the Lebesgue dominated convergence theorem  we obtain 
\begin{align}\label{sc-f-3}
|f_n(u(t))-f(u(t))|_{4/3}\to 0.
\end{align}
The result now follows by applying \eqref{sc-f-2} and \eqref{sc-f-3} to \eqref{sc-f-1}.
\end{proof}

Since each approximate solution $u^n$ satisfies $(u^n(0),u^n_t(0))=(u_0,u_1)$ in $\W\times\L[2]$ it is clear that the results of Proposition~\ref{prop:ICs} still apply, and similarly we obtain $-\Delta_2 u^n_t \to -\Delta_2 u_t$ weakly in $L^2(0,T;(\W)')$ precisely as in Proposition~\ref{prop:dampconverg}.  We thus turn our attention next to verifying the convergence of the terms due to the $p$-Laplacian in line with Proposition~\ref{prop:limlaplace} using an extremely similar argument.  
\begin{prop}[c.f Prop.~\ref{prop:limlaplace}] Up to a subsequence, the sequence of solutions $\{u^n\}$ of \eqref{sc-n} and the limit function $u$ in Corollary~\ref{cor:sc-converg} satisfy 
		\begin{align*}
		-\Delta_pu^n \to -\Delta_p u\text{ weak* in }L^\infty(0,T;(\W)').
		\end{align*}
\end{prop}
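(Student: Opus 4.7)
My plan is to adapt the monotone-operator argument used in Proposition \ref{prop:limlaplace}, with two key simplifications reflecting the present setting: the source-side convergences are supplied by Proposition \ref{prop:sc-f} in place of Proposition \ref{prop:limf}, and because each $u^n$ is itself a genuine weak solution (rather than a finite-dimensional Galerkin approximation), I may test its own weak formulation directly against $\phi=u^n$ via the product rule (Proposition \ref{prop:prodrule}), bypassing the auxiliary Galerkin-basis maneuver of the earlier proof.

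First, the operator-norm bound \eqref{plaplace-opnorm} together with \eqref{sc-converg-a} guarantees that $\{-\Delta_p u^n\}$ is bounded in $L^\infty(0,T;(\W)')$, so on a subsequence I obtain $-\Delta_p u^n\to \eta$ weak* in this space for some $\eta$. Setting $X=L^p(0,T;\W)$ and invoking the maximal monotonicity of $-\Delta_p$ on $X$ (Lemma \ref{lem:plaplace-mmono}), the identification $\eta=-\Delta_p u$ in $X'$ reduces to the limsup condition
\begin{align*}
\limsup_{n\to\infty}\langle -\Delta_p u^n, u^n\rangle_{X',X}\leq \langle \eta, u\rangle_{X',X},
\end{align*}
after noting that the remaining cross terms in $\langle -\Delta_p u^n-\eta, u^n-u\rangle_{X',X}$ vanish in the limit by \eqref{sc-converg-a} and the weak* convergence.

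To obtain both sides of this inequality, I pass to the limit in \eqref{sc-slnid} using Propositions \ref{prop:sc-f}, \ref{prop:ICs}, the analogue of Proposition \ref{prop:dampconverg}, and \eqref{sc-converg-f}, producing a limit identity for $u$ in which $\eta$ replaces $-\Delta_p u$. Testing the weak formulation for $u^n$ against $\phi=u^n$ (respectively, the limit identity against $\phi=u$) and using Proposition \ref{prop:prodrule} to rewrite $\int_0^t\langle u_{tt},u\rangle_p\,d\tau = (u_t(t),u(t))_\O - (u_1,u_0)_\O - \int_0^t\|u_t\|_2^2\,d\tau$ produces two structurally identical expressions involving the endpoint pairing $(u_t(t),u(t))_\O$, the kinetic term $\int_0^t\|u_t\|_2^2\,d\tau$, the gradient term $\tfrac{1}{2}\|\grad u(t)\|_2^2$, the boundary inner product $\int_0^t(u_t,u)_\G\,d\tau$, and the source integral. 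Taking $\limsup_n$ in the first and invoking the convergences of Corollary \ref{cor:sc-converg} and Proposition \ref{prop:sc-f}, every term except the gradient-norm term is recovered as an equality at the limit.

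The principal obstacle is precisely the term $-\tfrac{1}{2}\|\grad u^n(t)\|_2^2$, which converges only by an inequality. However, since weak convergence $u^n(t)\to u(t)$ in $\W$ for a.e.\ $t\in[0,T]$ is extractable from \eqref{sc-converg-a} combined with \eqref{sc-converg-d} exactly as in the original proof, weak lower semicontinuity of $\|\grad\cdot\|_2^2$ delivers $\limsup_n\bigl(-\tfrac{1}{2}\|\grad u^n(t)\|_2^2\bigr)\leq -\tfrac{1}{2}\|\grad u(t)\|_2^2$, an inequality of the correct orientation. This yields the desired limsup bound pointwise a.e.\ in $t$, which integrates up to the $X'$-level statement, completing the maximal monotonicity argument and establishing $\eta=-\Delta_p u$; the weak* convergence in $L^\infty(0,T;(\W)')$ then follows from the embedding $L^\infty(0,T;(\W)')\subset X'$.
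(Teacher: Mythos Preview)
Your proposal is correct and follows essentially the same route as the paper: both arguments extract a weak* limit $\eta$, reduce via maximal monotonicity to the $\limsup$ inequality, obtain the two requisite expressions by testing \eqref{sc-slnid} with $\phi=u^n$ and (after passage to the limit) with $\phi=u$, and handle the obstructive gradient term via weak lower semicontinuity. The only cosmetic difference is that the paper tests \eqref{sc-slnid} with $\phi=u$ first and then passes to the limit to identify the $\eta$ side, whereas you first form the limit identity and then test it with $u$; the outcomes are identical.
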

\begin{proof}
	We shall inherit the framework of the proof of Proposition~\ref{prop:limlaplace} by taking $X=L^p(0,T;\W)$ and the $p$-Laplacian extended to a maximal monotone operator $-\Delta_p:X\to X'$.  As in that proof, the bounds from \eqref{sc-converg-a} can be used to show that there exists some $\eta\in X'$ so that 
	\begin{align*}
	-\Delta_p u^n\to\eta\text{ weakly in }X',
	\end{align*}
	and likewise we may conclude thanks to the properties of maximal monotone operators that $\eta=-\Delta_p u$ in $X'$ provided 
	\begin{align}\label{sc-limlaplace-wts}
	\limsup_{n\to\infty} \langle -\Delta_p u^n,u^n\rangle_{X',X} \leq \langle \eta, u\rangle_{X',X}.
	\end{align}

	By taking $\phi=u^n$ in \eqref{sc-slnid} and rearranging we obtain
	\begin{multline}\label{sc-laplace1}
\int_0^t \langle -\Delta_p u^n,u^n\rangle_p\,d\tau =
	   	 (u_1,u^n(0))_\O -(u^n_t(t),u^n(t))_\O + \int_0^t ||u^n_t(\tau)||_2^2\,d\tau 	\\
	   	- \int_0^t \langle -\Delta_2 u^n_t(\tau),u^n(\tau)\rangle_2\,d\tau 
	   	+ \int_0^t \int_\G f_n(u^n(\tau))u^n(\tau)\,dSd\tau 
	\end{multline}	
	that upon integration by parts,
	\begin{multline}\label{sc-laplace2}
		\int_0^t \langle -\Delta_p u^n,u^n\rangle_p\,d\tau =
		\underbrace{  (u_1,u^n(0))_\O -(u^n_t(t),u^n(t))_\O}_\text{(i)} \\
		+ \underbrace{\int_0^t ||u^n_t(\tau)||_2^2\,d\tau}_\text{(ii)}
		- \underbrace{\frac{1}{2}||\grad u^n(t)||_2^2 + \frac{1}{2}||\grad u^n(0)||_2^2}_\text{(iii)}\\ 
		-\underbrace{\int_0^t (u^n_t,u^n)_\G\,d\tau}_\text{(iv)}
		+ \underbrace{\int_0^t \int_\G f_n(u^n(\tau))u^n(\tau)\,dSd\tau}_\text{(v)} .
	\end{multline}
	Since this expression is identical to \eqref{limlaplace-s1pp} in Proposition~\ref{prop:limlaplace} we may justify taking the limit superior in \eqref{sc-laplace2} using the exact same arguments on each of the terms $\text{(i)}$ through $\text{(v)}$. Thus,
	\begin{multline}\label{sc-laplace3}
		\limsup_{n\to\infty}\int_0^t \int_0^t \langle -\Delta_p u^n,u^n\rangle_p\,d\tau \leq (u_1,u(0))_\O - (u_t(t),u(t))_\O \\
		+ \int_0^t ||u_t(\tau)||_2^2\,d\tau 
		-\frac{1}{2}||\grad u(t)||_2^2 + \frac{1}{2}||\grad u(0)||_2^2\\
		-\int_0^t(u_t,u)_\G\,d\tau + \int_0^t \int_\G f(u)u\,dSd\tau\text{ a.e. }[0,T].
	\end{multline}
	Again, we shall attempt to express the right hand side of \eqref{sc-laplace3} through a different means.  By taking $\phi=u$ in \eqref{sc-slnid}, 
	\begin{multline}\label{sc-laplace4}
		\int_0^t \langle -\Delta_p u^n,u\rangle_p\,d\tau =
		(u_1,u(0))_\O -(u^n_t(t),u(t))_\O + \int_0^t (u^n_t(\tau),u_t(\tau))_\O\,d\tau 	\\
		- \int_0^t \langle -\Delta_2 u^n_t(\tau),u(\tau)\rangle_2\,d\tau 
		+ \int_0^t \int_\G f_n(u^n(\tau))u(\tau)\,dSd\tau.
	\end{multline}	
	Taking the limit as $n\to\infty$ in \eqref{sc-laplace4} is readily justified, so that 
	\begin{multline}\label{sc-laplace5}
		\int_0^t \langle \eta,u\rangle_p\,d\tau =  \lim_{n\to\infty}\int_0^t \langle -\Delta_p u^n,u\rangle_p\,d\tau = (u_1,u(0))_\O - (u_t(t),u(t))_\O \\
		+ \int_0^t ||u_t(\tau)||_2^2\,d\tau 
		-\frac{1}{2}||\grad u(t)||_2^2 + \frac{1}{2}||\grad u(0)||_2^2\\
		-\int_0^t(u_t,u)_\G\,d\tau + \int_0^t \int_\G f(u)u\,dSd\tau\text{ a.e. }[0,T].
	\end{multline}
	\sloppy Combining \eqref{sc-laplace3} and \eqref{sc-laplace5} we achieve the desired inequality in \eqref{sc-limlaplace-wts}. The conclusion then follows immediately as the sequence $\{-\Delta_p u^n\}$ is bounded in $L^\infty(0,T;(\W)')$ from \eqref{sc-converg-a} and following verbatim the same calculation as in the onset of Proposition~\ref{prop:limlaplace}.
\end{proof}

With this result established, it is seen that the limit function $u$ indeed verifies the identity \eqref{slnid} in Definition~\ref{def:weaksln}.  Moreover, it is easily verified that $u\in C_w([0,T];\W)$ and that $u_t\in C_w([0,T];\L[2])$ using precisely the same arguments as in Proposition~\ref{prop:uinCw}.  Thus, it remains only to show that the energy inequalities still apply in line with Proposition~\ref{energy}.

\begin{prop}[c.f. Prop.~\ref{energy}]\label{sc-energy} The limit function $u$ identified in Corollary~\ref{cor:sc-converg} satisfies the energy inequalities \eqref{energy-2ndid} and \eqref{energy-1stid} in the statement of Theorem~\ref{thm:exist}.
\end{prop}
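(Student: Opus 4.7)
The approach mirrors the proof of Proposition~\ref{energy}, with the complication that the nonlinearity itself varies with $n$. My plan is to start from the energy inequality already satisfied by each approximate solution $u^n$, pass to the limit along the convergences in Corollary~\ref{cor:sc-converg}, and exploit the uniform pointwise domination $|f_n(s)| = |f(s)||\eta_n(s)| \le |f(s)| \le C(1+|s|^r)$ to control every term in which the source appears with estimates that are uniform in $n$.

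Since each $f_n$ is globally Lipschitz from $\W$ into $\Lb[2]$ by Lemma~\ref{lem:sc-lip} and each $u^n$ is furnished by Proposition~\ref{prop:ll}, each $u^n$ satisfies
\[
E_n(t) + \int_0^t \|u^n_t(\tau)\|_{1,2}^2\,d\tau \le E_n(0), \qquad E_n(t) = \tfrac{1}{2}\|u^n_t\|_2^2 + \tfrac{1}{p}\|u^n\|_{1,p}^p - \int_\G F_n(u^n(t))\,dS,
\]
with $F_n(s) = \int_0^s f_n(\tau)\,d\tau$. The key convergence I need is $\int_\G F_n(u^n(t))\,dS \to \int_\G F(u(t))\,dS$ for each $t \in [0,T]$. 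I would split
\[
F_n(u^n(t)) - F(u(t)) = \bigl(F_n(u^n(t)) - F_n(u(t))\bigr) + \bigl(F_n(u(t)) - F(u(t))\bigr).
\]
The first bracket is estimated exactly as in \eqref{energy-2.5}: the mean value theorem together with the bound $|f_n| \le C(1+|\cdot|^r)$ (uniform in $n$) reproduces the same computation, after which $\|u^n-u\|_{1-\epsilon,p} \to 0$ by \eqref{sc-converg-d}. The second bracket vanishes on $\G$ by the Lebesgue dominated convergence theorem: scalarly $F_n(s) \to F(s)$ for every $s$ (using $\eta_n \to 1$ pointwise and $|f_n| \le |f|$), and it is uniformly dominated by $2|F(u(t))|$, which is integrable on $\G$ via $|F(s)| \le C(1+|s|^{r+1})$ and the trace embeddings. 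Combining this with weak lower semicontinuity of $\|\cdot\|_2$, $\|\cdot\|_{1,p}$ and of the $L^2(0,T;\H[1])$ norm against \eqref{sc-converg-b}, \eqref{sc-converg-a}, \eqref{sc-converg-c}, and with $E_n(0) \to E(0)$ at $t=0$ (since $u^n(0)=u_0$ and $F_n(u_0) \to F(u_0)$), I obtain \eqref{energy-1stid}.

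To upgrade to \eqref{energy-2ndid} I need the identity $\int_\G F(u(t))\,dS - \int_\G F(u_0)\,dS = \int_0^t\int_\G f(u)u_t\,dS\,d\tau$. The analogous identity with $f_n, F_n$ holds for each $u^n$ by the regularity argument in Proposition~\ref{energy} applied to the $n$-th problem, so in view of the convergence of the potentials already established it suffices to show
\[
\int_0^t\!\!\int_\G f_n(u^n)u^n_t\,dS\,d\tau \;\to\; \int_0^t\!\!\int_\G f(u)u_t\,dS\,d\tau.
\]
I would split this difference into $\int f_n(u^n)(u^n_t - u_t)$ and $\int (f_n(u^n) - f(u))u_t$: the first is controlled by H\"older's inequality with $\alpha = 4/(1+2\epsilon)$, $\alpha' = 4/(3-2\epsilon)$ exactly as in the closing argument of Proposition~\ref{energy}, using that $|f_n(u^n)|_{\alpha'} \le C(1+|u^n|_{r\alpha'}^r)$ is bounded on $[0,T]$ uniformly in $n$ and that $u^n_t \to u_t$ in $L^2(0,T;\W[1-\epsilon,2])$ by \eqref{sc-converg-e}; the second vanishes by Proposition~\ref{prop:sc-f} together with $u_t \in L^1(0,T;\Lb[4])$ inherited from \eqref{sc-converg-c}. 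The main obstacle throughout is maintaining estimates that are uniform in $n$, and this is precisely what the pointwise bound $|f_n| \le |f|$ and the growth condition \eqref{ass:f} afford; no genuinely new ingredient beyond Proposition~\ref{energy} is needed.
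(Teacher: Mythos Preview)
Your proposal is correct and follows essentially the same route as the paper's proof: start from the energy inequality for each $u^n$, pass the potential terms $\int_\G F_n(u^n)$ to the limit via the mean value theorem and \eqref{sc-converg-d}, invoke weak lower semicontinuity for the remaining norms, and then establish the source convergence $\int_0^t\int_\G f_n(u^n)u^n_t \to \int_0^t\int_\G f(u)u_t$ to recover \eqref{energy-2ndid}. The only cosmetic difference is that you handle this last convergence with a two-term split and appeal directly to Proposition~\ref{prop:sc-f}, whereas the paper uses a three-term split $|f_n(u^n)||u^n_t-u_t| + |u_t||f_n(u^n)-f_n(u)| + |u_t||f_n(u)-f(u)|$ and argues each piece by hand; your version is slightly more economical but not materially different.
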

\begin{proof}The proof here is essentially unchanged from Proposition~\ref{energy}. Since each approximate solution $u^n$ verifies \eqref{energy-1stid} we obtain 
\begin{align}\label{sc-energy1}
E_n(t) + \int_0^t ||u^n_t(\tau)||_{1,2}^2\,d\tau \leq E_n(0)
\end{align}
with $E_n(t)=\frac{1}{2}||u^n_t(t)||_2^2 + \frac{1}{p}||u^n(t)||_{1,p}^p - \int_\G F_n(u^n(t))\,dS$ by taking 
$F_n(u^n)=\int_0^{u^n}f_n(s)\,ds$. From the mean value theorem,
\begin{align*}
|F_n(u^n) - F_n(0)| &= |f_n(\xi)||u^n-u|\\
		&\leq |f(\xi)||u^n-u|
\end{align*}
for some $\xi$ with $|\xi|\leq |u^n|+|u|$. Thus,
\begin{align*}
\int_\G |F_n(u^n)-F_n(u)|\,dS \leq C(1+||u^n|||_{1-\epsilon,p}^r + ||u||_{1-\epsilon,p}^r)||u^n-u||_{1-\epsilon,p}
\end{align*}
by the same calculation as in \eqref{energy-2.5}, whereupon it follows from \eqref{sc-converg-d} that 
\begin{align*}
\lim_{n\to\infty} \int_\G F_n(u^n(t))\,dS = \int_G F(u(t))\,dS;\quad t\in[0,T].
\end{align*}
Here as in Proposition~\ref{energy}, lower-semicontinuity of the norms along with \eqref{sc-energy1} yields the desired result of 
\begin{align*}
E(t)+\int_0^t ||u'(\tau)||_{1,2}^2\,d\tau \leq E(0)
\end{align*}
with $E$ as in the statement of Theorem~\ref{thm:exist} upon noting that $E_n(0)=E(0)$ for all $n$ as $(u^n(0),u^n_t(0))=(u_0,u_1)$ in $\W\times\L[2]$ for all $n$. 

To verify \eqref{energy-1stid} it is sufficient to demonstrate that 
\begin{align*}
\lim_{n\to\infty}\int_0^t \int_\G f_n (u^n(\tau))u^n_t(\tau)\,d\tau dS = \int_0^t \int_\G f(u(\tau))u_t(\tau)\,d\tau dS
\end{align*}
which is accomplished through the estimate 
\begin{multline}\label{sc-energy-3}
\int_0^t \int_\G |f_n(u^n(\tau))u^n_t(\tau) - f(u(\tau))u_t(\tau)|\,dSd\tau\\
\leq 
\underbrace{\int_0^t \int_\G |f_n(u^n(\tau))||u^n_t(\tau) - u_t(\tau)|\,dSd\tau}_\text{(i)} \\
+ 
\underbrace{\int_0^t \int_\G |u_t||f_n(u^n)-f_n(u)|\,dSd\tau}_\text{(ii)}
 +\underbrace{ \int_0^t\int_\G |u_t||f_n(u)-f(u)|\,dSd\tau}_\text{(iii)}.
\end{multline}
analogously to \eqref{energy-3} in Proposition~\ref{energy}. Since 
\begin{align*}
\text{(i)}\leq \int_0^t\int_\G |f(u^n(\tau))||u^n_t(\tau)-u_t(\tau)|\,dSd\tau
\end{align*}
we obtain $\text{(i)}\to 0$ through the exact argument used in the aforementioned proposition. For the second term in \eqref{sc-energy-3} we know that each $f_n$ is locally Lipschitz with constant not dependent upon $n$ as a map from $\W[1-\epsilon,p]$ into $\Lb[4/3]$ from Lemma~\ref{lem:sc-lip}, and thus 
\begin{align*}
\text{(ii)}\leq C\sup_{\tau\in[0,t]} ||u^n(\tau)-u(\tau)||_{1-\epsilon,p}||u_t||_{L^1(0,t;\Lb[4])} \to 0
\end{align*}
exactly as in item $\text{(ii)}$ of the Proposition~\ref{energy}.
Finally, since $f_n\to f$ pointwise a.e. and $|u_t||f_n(u)-f(u)|\leq 2|u_t||f(u)|\in L^1(0,T;\Lb[1])$ we achieve $\text{(iii)}\to 0$ by the dominated convergence theorem.
\end{proof}
This completes the proof of Theorem~\ref{thm:exist}.

\section{Global Solutions}\label{S4}
It has been shown in Section~\ref{S2} that global solutions of \eqref{wave} exist in the case where $f:\W \to \Lb[2]$ is globally Lipschitz continuous.  As this condition is assured only by taking $r=1$ (corresponding to an essentially linear source term), we seek a more meaningful bound on $r$ assuring global solutions.

As in \cite{GR, PRT-p-Laplacain} it is the case here that either the solution $u$ must, in fact, be global in time or else one may find a value of $T_0$ with $0<T_0<\infty$ so that
\begin{align}\label{global-blowup}
\limsup_{t\to T_0^-}\left( \mathscr E(t) + \int_0^t ||u_t(\tau)||_{1,2}^2\,d\tau\right) =\infty
\end{align}
with $\mathscr{E}(t)=\frac{1}{2} ||u_t(t)||_2^2 + \frac{1}{p}||u(t)||_{1,p}^p$ as in \eqref{energy-2ndid} from Theorem~\ref{thm:exist}.  In order to show this we demonstrate a bound on the energy
\begin{align}
\E(t) + \int_0^t ||u_t(\tau)||_{1,2}^2\,d\tau
\end{align}
on $[0,T]$ which is dependent only upon $T$ and the energy of the initial data, $\E(0)$.  With this bound, the situation described in \eqref{global-blowup} clearly cannot apply since one could always bound this quantity away from infinity on any finite interval.

The following proposition thus establishes the desired result.

\begin{prop}
	If $u$ is a weak solution of \eqref{wave} given by Theorem~\ref{thm:exist} on $[0,T]$ and $r\leq p/2$, then there exists a constant $M$ dependent upon $T$ and $\E(0)$ so that
	\begin{align*}
	 \E(t) + \int_0^t ||u_t(\tau)||_{1,2}^2\,d\tau < M\quad t\in[0,T].
	\end{align*}
\end{prop}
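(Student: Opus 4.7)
The plan is to invoke the energy inequality \eqref{energy-2ndid} from Theorem~\ref{thm:exist} and show that the boundary source integral on the right-hand side can be controlled by a multiple of $\E(\tau)$ plus a small multiple of $\|u'(\tau)\|_{1,2}^2$, at which point the first can be handled by Gronwall and the second can be absorbed into the damping term on the left. The exponent restriction $r\le p/2$ enters exactly at the Young inequality step.

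First I would apply \eqref{energy-2ndid} and the bound $|f(u)|\le C(|u|^r+1)$ from Remark~\ref{rmk:fbound} to write
\begin{align*}
\int_\Gamma f(u)u'\,dS \le C\int_\Gamma \bigl(|u|^r+1\bigr)|u'|\,dS.
\end{align*}
Next, using H\"older's inequality with conjugate exponents $4/3$ and $4$, followed by the trace embeddings $\W\tinto \Lb[2p/(3-p)]$ and $\H[1]\tinto \Lb[4]$ (valid in 3D), I would estimate
\begin{align*}
\int_\Gamma |u|^r|u'|\,dS \le |u|_{4r/3}^{\,r}\,|u'|_4 \le C\,\|u\|_{1,p}^{r}\,\|u'\|_{1,2},
\end{align*}
noting that the required inclusion $4r/3\le 2p/(3-p)$ holds because $r\le p/2$ combined with $p<3$ gives $r\le p/2 \le 3p/(2(3-p))$. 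Similarly $\int_\Gamma |u'|\,dS \le C\|u'\|_{1,2}$.

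Now I would apply Young's inequality $ab\le \epsilon a^2 + C_\epsilon b^2$ with a small $\epsilon$ chosen so that the $\|u'\|_{1,2}^2$ terms can later be absorbed, producing
\begin{align*}
\int_\Gamma f(u)u'\,dS \le \tfrac12\|u'\|_{1,2}^2 + C\bigl(1+\|u\|_{1,p}^{2r}\bigr).
\end{align*}
\textbf{This is the key place where $r\le p/2$ is used}: the inequality $2r\le p$ lets us dominate $\|u\|_{1,p}^{2r}\le 1+\|u\|_{1,p}^p\le 1 + p\,\E(\tau)$, whereas without this restriction the right-hand side would grow faster than $\E(\tau)$ and Gronwall would fail. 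Inserting this into \eqref{energy-2ndid} yields
\begin{align*}
\E(t) + \tfrac12\int_0^t\|u'(\tau)\|_{1,2}^2\,d\tau \le \E(0) + C\int_0^t\bigl(1+\E(\tau)\bigr)\,d\tau.
\end{align*}

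Finally I would drop the (non-negative) damping integral on the left and apply Gronwall's inequality to the resulting scalar inequality for $\E(t)$ to obtain $\E(t)\le (\E(0)+CT)e^{CT}$ on $[0,T]$; feeding this back into the previous display yields a uniform bound on $\int_0^t\|u'(\tau)\|_{1,2}^2\,d\tau$ as well. The constant $M$ thus obtained depends only on $T$ and $\E(0)$, which is exactly the stated conclusion. The main obstacle is really bookkeeping: verifying that the restriction $r\le p/2$ is compatible with the trace embedding giving $|u|_{4r/3}\le C\|u\|_{1,p}$; everything else is a standard energy-Gronwall argument.
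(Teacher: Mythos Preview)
Your proposal is correct and follows essentially the same approach as the paper's proof: both start from the energy inequality \eqref{energy-2ndid}, estimate the boundary source term by H\"older and Young to obtain a bound of the form $\tfrac12\|u_t\|_{1,2}^2 + C(1+\|u\|_{1,p}^p)$, and then close with Gronwall. The only cosmetic difference is that the paper splits with the $L^2\times L^2$ H\"older pair (using $|f(u)|_2|u_t|_2$ and the pointwise inequality $|a|^{2r}\le C(1+|a|^p)$ on $\Gamma$), whereas you use the $L^{4/3}\times L^4$ pair and the norm-level inequality $\|u\|_{1,p}^{2r}\le 1+\|u\|_{1,p}^p$; both routes rely on $2r\le p$ at the same step and yield the same Gronwall inequality.
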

\begin{proof}
	Since $u$ satisfies the energy identity
	\begin{align}\label{global-energyid}
	\E(t) + \int_0^t ||u_t(\tau)||_{1,2}^2\,d\tau \leq \E(0) + \int_0^t \int_\Gamma f(u(\tau))u_t(\tau)\,dSd\tau
	\end{align}
	we may bound $\E(t)$ using Gronwall's inequality as in Proposition~\ref{prop:apriori} provided the source term can be adequately controlled.  Recalling that $|f(u)|\leq C(|u|^r+1)$ from Remark~\ref{rmk:fbound} we may estimate from H\"older and Young's inequality with $\epsilon$ that
	\begin{align}\label{gb-1a}
	\int_\G f(u(\tau))u_t(\tau)\,dS &\leq |f(u(\tau))|_2|u_t(\tau)|_2\notag \\
	&\leq C_\epsilon(1+|u|_{2r}^{2r}) + \frac{1}{2}||u_t(\tau)||_{1,2}^2
	\end{align}
	for a suitable choice of $\epsilon$.   By noting that
	$$|a|^{2r}\leq (1+|a|)^{2r}\leq (1+|a|)^p\leq C(1+|a|^p)$$
	for $a\in\R$ and that $2r\leq p < 2p/(3-p)$ we
	may use continuity of the map $\W\tinto\Lb[2r]$ to obtain
	\begin{align}\label{gb-1}
	\int_\G f(u(\tau))u_t(\tau)\,dS\leq C(1+||u(\tau)||_{1,p}^p) + \frac{1}{2}||u_t(\tau)||_{1,2}^2\ptag{gb-1a}
	\end{align}
	from \eqref{gb-1a}.
	Integrating \eqref{gb-1} on $[0,t]$ we see that
	\begin{align*}
	\int_0^t\int_\G f(u(\tau))u_t(\tau)\,dSd\tau \leq C\int_0^t (1+\E(\tau))\,d\tau + \frac{1}{2}\int_0^t ||u_t(\tau)||_{1,2}^2\,d\tau
	\end{align*}
	as $||u(\tau)||_{1,p}^p \leq p\E(\tau)$, and applying this bound to \eqref{global-energyid} we obtain
	\begin{align}\label{gb-2}
	\E(t)+\frac{1}{2}\int_0^t ||u_t(\tau)||_{1,2}^2\,d\tau \leq \E(0) + C\int_0^t (1+\E(\tau))\,d\tau.
	\end{align}
	By Gronwall's inequality, \eqref{gb-2} now implies that
	\begin{align*}
	\E(t)\leq (\E(0)+Ct)\exp(Ct)< N
	\end{align*}
	for a constant $N$ on $[0,T]$, which in turn yields
	\begin{align}
	\E(t)+\int_0^t ||u_t(\tau)||_{1,2}^2 \leq \underbrace{2\E(0)+2CT(1+N)}_M, \quad t\in [0,T].
	\end{align}	
 The desired result then follows by taking $M$ to be the indicated constant $2\E(0)+2CT(1+N)$.
	\end{proof}

\noindent\textbf{Acknowledgements: } The authors wish to express their gratitude to the anonymous referees whose valuable feedback has served to substantively improve the quality of this manuscript. We are also thankful to Professor Hideo Kubo for communicating the manuscript.

\section{Appendix}\label{A}

{ \rev\begin{lem}[The $p$-Laplacian is maximal monotone on $X$]\label{lem:plaplace-mmono}
	The $p$-Laplacian $-\Delta_p:X \to X'$  given by 
	\begin{align*}
	\langle -\Delta_p u,\phi\rangle_{X',X} = \int_0^T \langle -\Delta_p u(\tau),\phi(\tau)\rangle_p\,d\tau; \qquad u,\phi\in X
	\end{align*}
	is maximal monotone from $X=L^p(0,T;\W)$ to its dual $X'$.
\end{lem}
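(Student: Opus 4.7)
The plan is to invoke Browder's theorem: a monotone, hemicontinuous operator from a reflexive Banach space into its dual is automatically maximal monotone. Since $X = L^p(0,T;W^{1,p}(\Omega))$ is reflexive for $2 < p < 3$, it will suffice to verify that $-\Delta_p : X \to X'$ is well-defined and bounded, monotone, and hemicontinuous.

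First, I would establish that $-\Delta_p$ as defined is indeed a bounded operator from $X$ into $X'$. The pointwise bound \eqref{plaplace-opnorm} gives $\|-\Delta_p u(\tau)\|_{(W^{1,p})'} \leq 2 \|u(\tau)\|_{1,p}^{p-1}$, so an application of H\"older's inequality in time (with exponents $p$ and $p' = p/(p-1)$) yields
\begin{equation*}
|\langle -\Delta_p u, \phi \rangle_{X',X}| \leq 2 \int_0^T \|u(\tau)\|_{1,p}^{p-1} \|\phi(\tau)\|_{1,p} \, d\tau \leq 2 \|u\|_X^{p-1} \|\phi\|_X,
\end{equation*}
so $-\Delta_p u \in X'$ with $\|-\Delta_p u\|_{X'} \leq 2 \|u\|_X^{p-1}$.

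Next, for monotonicity, I would use the classical vector inequality $(|a|^{p-2}a - |b|^{p-2}b)\cdot (a - b) \geq 0$ valid for all $a,b \in \reals^n$ (and its scalar analogue on $\Gamma$) to conclude the pointwise-in-time bound
\begin{equation*}
\langle -\Delta_p u(\tau) + \Delta_p v(\tau), u(\tau) - v(\tau) \rangle_p \geq 0 \text{ for a.e. } \tau \in [0,T],
\end{equation*}
and then integrate over $[0,T]$ to obtain $\langle -\Delta_p u + \Delta_p v, u - v \rangle_{X',X} \geq 0$.

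For hemicontinuity, fixing $u, v, w \in X$ and $t_n \to t$ in $\reals$, I would show $\langle -\Delta_p(u + t_n v), w \rangle_{X',X} \to \langle -\Delta_p(u + tv), w \rangle_{X',X}$ via dominated convergence. The pointwise (in $\tau$) convergence of the integrand follows from the continuity of the Nemytskii maps $s \mapsto |s|^{p-2}s$ and Lebesgue's theorem applied to the integrals over $\Omega$ and $\Gamma$; the dominating function is furnished by the bound $|(u+t_n v)|^{p-1} \leq C(|u|^{p-1} + |v|^{p-1})$ for bounded $t_n$, which lies in $L^{p'}(\Omega)$ and $L^{p'}(\Gamma)$ by the Sobolev embeddings. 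Browder's theorem then yields maximal monotonicity. The only mildly delicate point is the dominated-convergence argument for hemicontinuity, but it is routine once the correct majorant is identified; there is no deeper obstacle. (An alternative route I would keep in reserve is to observe that $-\Delta_p$ is the Fr\'echet derivative on $X$ of the convex continuous functional $u \mapsto \frac{1}{p}\int_0^T (\|\grad u(\tau)\|_p^p + |u(\tau)|_p^p)\,d\tau$, whose subdifferential is automatically maximal monotone.)
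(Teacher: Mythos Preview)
Your proposal is correct and follows essentially the same route as the paper: verify monotonicity and hemicontinuity, then invoke the standard criterion (Browder/Barbu) for maximal monotonicity, with hemicontinuity handled by dominated convergence exactly as you describe. The only visible difference is in the monotonicity step, where you use the classical pointwise inequality $(|a|^{p-2}a-|b|^{p-2}b)\cdot(a-b)\geq 0$ while the paper reaches the same conclusion via an algebraic expansion combined with H\"older and Young's inequalities; your argument is the more direct of the two.
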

\begin{proof}
	It is sufficient (for instance, \cite[Thm. 2.4]{Barbu2010}) to demonstrate that $-\Delta_p$ is both monotone and hemicontinuous in the sense that $-\Delta_p(u+\lambda v) \to -\Delta_p u$ weakly in $X'$ as $\lambda\to 0$ for all $u,v\in X$.
	
	For monotonicity of $-\Delta_p:\W\to(\W)'$ , we record the identity
	\begin{align}\label{plaplacemaxl-2}
	\langle -\Delta_p u,u\rangle_p &= \int_\Omega |\grad u|^{p-2}\grad u\cdot \grad u \,dx + \int_\Gamma |u|^{p-2}u^2\,dS \notag\\
	& = ||\grad u||_p^p + |u|_p^p\notag\\
	& = ||u||_{1,p}^p
	\end{align}
	along with the estimate
	\begin{align}\label{plaplacemaxl-3}
	\langle -\Delta_p u,v\rangle_p &= \int_\Omega |\grad u|^{p-2}\grad u \cdot \grad v \,dx + \int_\Gamma |u|^{p-2}uv\,dS\notag\\
	&\leq\int_\Omega |\grad u|^{p-1}|\grad v|\,dx + \int_\Gamma |u|^{p-1}|v|\,dS \notag\\
	&\leq ||\grad u||_p^{p-1}||\grad v||_p + |u|_p^{p-1}|v|_p\notag \\
	& \leq  \frac{p-1}{p}||\grad u||_p^p + \frac{1}{p}||\grad v||_p^p + \frac{p-1}{p}|u|_p^p + \frac{1}{p}|v|_p^p\notag \\
	& = \frac{p-1}{p}||u||_{1,p}^p + \frac{1}{p}||v||_{1,p}^p	
	\end{align}
	obtained by H\"older and Cauchy's inequalities with conjugate exponents $p$ and $p/(p-1)$.
	Using linearity along with \eqref{plaplacemaxl-2} and \eqref{plaplacemaxl-3} we then have, as desired, 
	\begin{align*}
	\langle(-\Delta_p u) - (-\Delta_p v),u-v\rangle_p & = \langle -\Delta_p u,u\rangle_p -\langle -\Delta_p u,v\rangle_p - \langle -\Delta_p v,u\rangle_p + \langle -\Delta_p v,v\rangle_p\\
	& \geq ||u||_{1,p}^p + ||v||_{1,p}^p \\
	&\qquad - \frac{p-1}{p}||u||_{1,p}^p - \frac{1}{p}||v||_{1,p}^p - \frac{p-1}{p}||v||_{1,p}^p - \frac{1}{p}||u||_{1,p}^p \\
	&=0.
	\end{align*}

	Monotonicity of $-\Delta_p$ as a map from $X$ to $X'$ then follows immediately as
	\begin{align*}
	\langle (-\Delta_p u)-(-\Delta_p v),u-v\rangle_{X',X} = \int_0^T \langle (-\Delta_p u(\tau))-(-\Delta_p v(\tau)),u(\tau)-v(\tau)\rangle_p\,d\tau \geq 0
	\end{align*}
	for each $u,v\in X$ given that the integrand is nonnegative at each $\tau\in[0,T]$.
	
	For hemicontinuity we appeal to the dominated convergence theorem.  Given $u,v,\phi\in X$ and $|\lambda|\leq 1$ we may produce the pointwise upper bounds 
	\begin{subequations}
	\begin{align}\label{plaplacemax-ptbound}
	|\grad(u+\lambda v)|^{p-2}\grad(u+\lambda v)\cdot\grad\phi \leq (|\grad u| + |\grad v|)^{p-1}|\grad \phi|\text{ on }\Omega\times [0,T]
	\end{align}
	and
	\begin{align}\label{plaplacemax-ptbound2}
	|u+\lambda v|^{p-2}(u+\lambda v)\phi \leq (|u| + |v|)^{p-1}|\phi|\text{ on }\Gamma\times[0,T].
	\end{align}
	\end{subequations}
	Each of these pointwise bounds is indeed $L^1(0,T;\L[1])$ and $L^1(0,T;\Lb[1])$, respectively, since 
	\begin{multline*}
	\int_0^T \int_\Omega (|\grad u| + |\grad v|)^{p-1}|\grad \phi|\,dxd\tau \\
	\leq C\left(||\grad u||_{L^p(0,T;\L[p])}^{p-1} + ||\grad v||_{L^p(0,T;\L[p])}^{p-1}\right)||\grad\phi||_{L^p(0,T;\L[p])}<\infty
	\end{multline*}
	and, analagously,
	\begin{align*}
	\int_0^T \int_\Gamma (|u|+|v|)^{p-1}|\phi|\,dSd\tau <\infty
	\end{align*}
	from H\"older's inequality with conjugate exponents $p$ and $p/(p-1)$. Thus, 
		\begin{align*}
		\langle -\Delta_p (u+\lambda v),\phi\rangle_{X',X} &= \int_0^T \int_\Omega |\grad(u+\lambda v)|^{p-2}\grad(u+\lambda v)\cdot\grad \phi\,dxd\tau \\
		&\qquad\qquad +  \int_0^T \int_\Gamma |u+\lambda v|^{p-2}(u+\lambda v)\phi\,dSd\tau\\
		&\to \langle -\Delta_p u,\phi\rangle_{X',X} 	 
		\end{align*}
		as $\lambda \to 0$.
\end{proof}
} %

\begin{prop}[Proposition~A.1 in \cite{PRT-p-Laplacain}, similar in proof to {\cite[Prop. 3.2]{RW}} ]\label{prop:prodrule} Let $H$ be a Hilbert space and $X$ be a Banach space such that $X\into H \into X'$ where each injection is continuous with dense range.  If 
	\begin{alignat*}{4}
			f&\in&L^2(0,T;H),\qquad &g&\in& L^2(0,T;X),\\
			f'&\in&L^2(0,T;X'),\qquad&g'&\in&L^2(0,T;H), 	
	\end{alignat*}
	then $(f(t),h(t))_H$ coincides with an absolutely continuous function almost everywhere on $[0,T]$, and 
	\begin{align*}
	\frac{d}{dt}(f(t),g(t))_H = \langle f'(t),g(t)\rangle_{X',X} + (f(t),g'(t))_H,  \quad \text{a.e.}\,\, [0,T].
	\end{align*}
\end{prop}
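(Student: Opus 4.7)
The plan is to reduce to a situation where the classical product rule applies by smoothing $f$ and $g$ in time via mollification, then passing to the limit. Let $\rho_\epsilon\in C_c^\infty(\mathbb{R})$ be a standard symmetric mollifier supported in $(-\epsilon,\epsilon)$. After extending $f,g$ (together with their weak derivatives) to slightly enlarged intervals in a way consistent with the weak derivative structure — for instance, by reflecting across $0$ and $T$, or by restricting attention to a compact subinterval $[a,b]\subset(0,T)$ and taking $\epsilon<\min(a,T-b)$ — define $f_\epsilon=f*\rho_\epsilon$ and $g_\epsilon=g*\rho_\epsilon$. By standard properties of convolution in Bochner spaces, $f_\epsilon\in C^\infty([a,b];H)$ with $f_\epsilon'=f'*\rho_\epsilon$ as elements of $C^\infty([a,b];X')$, and analogously $g_\epsilon\in C^\infty([a,b];X)$ with $g_\epsilon'=g'*\rho_\epsilon\in C^\infty([a,b];H)$. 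Moreover, $f_\epsilon\to f$ in $L^2(0,T;H)$ (and pointwise at every Lebesgue point), $f_\epsilon'\to f'$ in $L^2(0,T;X')$, and the analogous convergences hold for $g_\epsilon$ and $g_\epsilon'$.

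For each $\epsilon>0$, since $f_\epsilon, g_\epsilon$ are smooth, the map $t\mapsto (f_\epsilon(t),g_\epsilon(t))_H$ is smooth and the classical product rule yields
\begin{align*}
\frac{d}{dt}(f_\epsilon(t),g_\epsilon(t))_H = (f_\epsilon'(t),g_\epsilon(t))_H + (f_\epsilon(t),g_\epsilon'(t))_H.
\end{align*}
The crucial observation, coming from the way $H$ is identified with a dense subspace of $X'$ via the Gelfand triple $X\hookrightarrow H\hookrightarrow X'$, is the compatibility identity $(h,x)_H = \langle h,x\rangle_{X',X}$ for every $h\in H$ and $x\in X$, where on the right $h$ is regarded as an element of $X'$ through the canonical embedding. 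Applying this with $h=f_\epsilon'(t)\in H$ and $x=g_\epsilon(t)\in X$ rewrites the first term as $\langle f_\epsilon'(t),g_\epsilon(t)\rangle_{X',X}$. Integrating on $[s,t]\subset(0,T)$ therefore gives
\begin{align*}
(f_\epsilon(t),g_\epsilon(t))_H-(f_\epsilon(s),g_\epsilon(s))_H = \int_s^t\langle f_\epsilon'(\tau),g_\epsilon(\tau)\rangle_{X',X}\,d\tau + \int_s^t (f_\epsilon(\tau),g_\epsilon'(\tau))_H\,d\tau.
\end{align*}

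Next I would send $\epsilon\to 0$. For almost every $s,t\in[0,T]$ (specifically, common Lebesgue points), the convergences $f_\epsilon(t)\to f(t)$ in $H$ and $g_\epsilon(t)\to g(t)$ in $X\hookrightarrow H$ combine to yield $(f_\epsilon(t),g_\epsilon(t))_H\to (f(t),g(t))_H$, handling the left-hand side. For the right-hand side, the duality pairing is a continuous bilinear form on $L^2(0,T;X')\times L^2(0,T;X)$, and the $H$-inner product is continuous on $L^2(0,T;H)\times L^2(0,T;H)$, so both integrals pass to the limit. This yields, for a.e. $s,t\in[0,T]$,
\begin{align*}
(f(t),g(t))_H-(f(s),g(s))_H = \int_s^t\langle f'(\tau),g(\tau)\rangle_{X',X}\,d\tau + \int_s^t (f(\tau),g'(\tau))_H\,d\tau.
\end{align*}
The right-hand side is absolutely continuous in $t$ (as an integral of an $L^1$ function), so $t\mapsto(f(t),g(t))_H$ coincides a.e.\ with an absolutely continuous function; differentiating via the Lebesgue differentiation theorem produces the claimed identity a.e.

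The main obstacle is twofold. First, one must carefully reconcile the two notions of derivative of $f_\epsilon$: the classical derivative of the $H$-valued smooth function $f*\rho_\epsilon$, and the convolution $f'*\rho_\epsilon$ of the weak $X'$-valued derivative; these agree on the interior once one verifies that the weak derivative commutes with convolution, which in turn requires a clean extension of $f$ avoiding spurious boundary distributions (restricting to $[a,b]\subset(0,T)$ and afterwards letting $a\downarrow 0$ and $b\uparrow T$ is a safe route). Second, the compatibility identity $(h,x)_H=\langle h,x\rangle_{X',X}$ is not a tautology but rather the defining feature of how $H$ is embedded in $X'$; it must be stated explicitly and invoked to convert the smooth $H$-product rule into the mixed duality/inner-product form demanded by the conclusion.
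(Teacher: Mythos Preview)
The paper does not actually supply a proof of this proposition; it merely cites \cite{PRT-p-Laplacain} and \cite{RW} for the argument. So there is no in-paper proof to compare against. Your mollification approach is the standard route for Lions--Magenes style product rules and is essentially what one finds in the cited references: regularize in time, apply the ordinary product rule to the smooth approximants, rewrite via the Gelfand-triple compatibility $(h,x)_H=\langle h,x\rangle_{X',X}$, and pass to the limit using the $L^2$ convergences. Your identification of the two delicate points---matching the classical $H$-valued derivative of $f_\epsilon$ with $f'*\rho_\epsilon$ in $X'$, and invoking the compatibility identity rather than treating it as automatic---is exactly right, and your handling of endpoints by restricting to $[a,b]\subset(0,T)$ and then exhausting is the clean way to avoid boundary artifacts. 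The argument is correct.
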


\begin{lem}\label{lem:ll-trunc}If $f:\W \to\Lb[2]$ is locally Lipschitz continuous, then $f_K:\W \to \Lb[2]$ given by 
		\begin{align*}
		f_K(u)=\begin{cases}
		f(u),&\text{ for }||u||_{1,p} \leq K\\
		f\left(\frac{Ku}{||u||_{1,p}}\right),&\text{ for }|| u||_{1,p} >K
		\end{cases}
		\end{align*}
is globally Lipschitz for each $K>0$. Further, the global Lipschitz constant of $f_K$ is no more than twice the local Lipschitz constant of $f$ on the ball $B(0,K)\subset\W$. 
\end{lem}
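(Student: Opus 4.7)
The plan is to view $f_K$ as the composition $f \circ T_K$ where $T_K \colon \W \to \overline{B(0,K)}$ is the radial retraction defined by
\begin{align*}
T_K(u) = \begin{cases} u, & \|u\|_{1,p} \leq K, \\ Ku/\|u\|_{1,p}, & \|u\|_{1,p} > K. \end{cases}
\end{align*}
If I can show that $T_K$ is globally Lipschitz from $\W$ into itself with constant $2$, and observe that its range lies in the closed ball $\overline{B(0,K)}$ on which $f$ is Lipschitz with constant $L$ (the local Lipschitz constant), then the composition gives
\begin{align*}
|f_K(u)-f_K(v)|_2 = |f(T_K u)-f(T_K v)|_2 \leq L\,\|T_K u-T_K v\|_{1,p} \leq 2L\,\|u-v\|_{1,p},
\end{align*}
which is exactly what the lemma asks for.

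The heart of the argument is thus proving that the radial retraction has Lipschitz constant at most $2$. I would split into three cases according to whether the norms of $u$ and $v$ lie above or below $K$. The case $\|u\|_{1,p}, \|v\|_{1,p} \leq K$ is trivial since $T_K$ acts as the identity. When both norms exceed $K$, I would insert and subtract $Kv/\|u\|_{1,p}$ and use the triangle inequality together with the reverse triangle inequality $|\|u\|_{1,p}-\|v\|_{1,p}| \leq \|u-v\|_{1,p}$ to get
\begin{align*}
\left\|\frac{Ku}{\|u\|_{1,p}}-\frac{Kv}{\|v\|_{1,p}}\right\|_{1,p} \leq \frac{K}{\|u\|_{1,p}}\|u-v\|_{1,p} + \frac{K|\|u\|_{1,p}-\|v\|_{1,p}|}{\|u\|_{1,p}} \leq 2\|u-v\|_{1,p},
\end{align*}
using $K \leq \|u\|_{1,p}$. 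The mixed case $\|u\|_{1,p} \leq K < \|v\|_{1,p}$ is the one I expect to be the main obstacle, since it is tempting but wrong to hope for a constant of $1$ as one would get in a Hilbert space via the nearest-point projection; here one must settle for $2$. I would handle it by writing
\begin{align*}
\left\|u-\frac{Kv}{\|v\|_{1,p}}\right\|_{1,p} \leq \|u-v\|_{1,p} + \left\|v-\frac{Kv}{\|v\|_{1,p}}\right\|_{1,p} = \|u-v\|_{1,p} + \bigl(\|v\|_{1,p}-K\bigr),
\end{align*}
and then estimating $\|v\|_{1,p}-K \leq \|v\|_{1,p}-\|u\|_{1,p} \leq \|u-v\|_{1,p}$ using $\|u\|_{1,p} \leq K$, which yields the bound $2\|u-v\|_{1,p}$ in this case as well.

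Once these three cases are combined, the Lipschitz estimate for $T_K$ is complete, and the final line is the one-line composition computation indicated above. I would then note that both the well-definedness of $T_K$ and the fact that its range lies in the closed ball where $f$ admits the local Lipschitz constant $L$ are immediate from the definition, so no further measurability or regularity discussion is needed, and the stated bound of twice the local Lipschitz constant of $f$ on $B(0,K)$ follows directly.
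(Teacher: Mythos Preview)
Your proof is correct and is essentially the same as the paper's: both split into the three cases according to whether $\|u\|_{1,p}$ and $\|v\|_{1,p}$ lie above or below $K$, and the estimates in each case are line-for-line the same (the paper's Case~2 is your mixed case, and its Case~3 uses the same insert-and-subtract trick as your both-outside case). The only cosmetic difference is that you explicitly factor $f_K = f\circ T_K$ through the radial retraction and prove the Lipschitz bound for $T_K$ first, whereas the paper carries $f$ along throughout; your presentation is arguably cleaner but the content is identical.
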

\begin{proof}
	The proof given here is similar to Theorem~7.2 in \cite{CEL1} but does not require a Hilbert space structure on the domain of $f$.
	
	Let $C_K$ denote the local Lipschitz constant of $f$ on the ball $B(0,K)\subset \W$ and, for notational convenience, let $||\cdot||$ denote the $\W$ norm throughout.  For $u,v\in\W$ exactly one of three cases must apply based on the respective norms of these functions:
	\begin{enumerate}
		\item[Case 1:] $||u||,\;||v||\leq K$. Here, $f_K(u)=f(u)$ and $f_K(v)=f(v)$ so that \begin{align*}
		|f_K(u)-f_K(v)|_2\leq C_K ||u-v||.
		\end{align*}
		
		\item[Case 2:] $||u||\leq K,\; ||v||>K$. 
		Set $\tilde{v}=\frac{K}{||v||}v$ so that
		\begin{align*}
		|f_K(u)-f_K(v)|_2 &= |f(u)-f(\tilde{v})|_2 \leq C_K||u-\tilde{v}||.
		\end{align*}
		Noticing that $||v-\tilde{v}||=||v||-K$ we then have from the triangle inequality 
		\begin{align*}
		|f_K(u)-f_K(v)|_2&\leq C_K \left(||u-v||+||v-\tilde{v}||\right)\\
		&\leq C_K \left( ||u-v|| + \big| ||v||-||u|| \big| \right) \\
		&\leq 2C_K||u-v||.
		\end{align*}

		\item[Case 3:] $||u||,\;||v||>K$. Set $\tilde{u}=\frac{K}{||u||}u$ and $\tilde{v}$ as in case 2 so that 
		\begin{align*}
		||\tilde{u}-\tilde{v}|| &= K \left\Vert \frac{u}{||u||} - \frac{v}{||v||}\right\Vert\\
		&=\frac{K}{||u||||v||} \big\Vert u||v|| - v||u||\big\Vert\\
		&\leq \frac{K}{||u||||v||} \left(\big\Vert u||v|| - v||v||\big\Vert + \big\Vert v||v|| - v||u||\big\Vert \right)\\
		&\leq \frac{K}{||u||} \left( ||u-v|| + \big| ||v||-||u||\big| \right)\\
		&\leq 2||u-v||.
		\end{align*}
		As such, 
		\begin{align*}
		|f_K(u)-f_K(v)|_2\leq C_K ||\tilde{u}-\tilde{v}|| \leq 2C_K ||u-v||.
		\end{align*}
	\end{enumerate}
\end{proof}

\begin{proof}[Proof of Lemma~\ref{lem:sc-lip}] 
 Given any measurable functions $u,v$ define the regions
		\begin{flalign*}
		\Gamma_1 &= \{x\in\Gamma : |u(x)|,\;|v(x)|\leq 2n\},&
		\Gamma_2 &= \{x\in\Gamma : |u(x)|\leq 2n,\;|v(x)|> 2n\},\\
		\Gamma_3 &=  \{x\in\Gamma : |u(x)|,\;|v(x)|> 2n\},&
		\Gamma_4 &= \{x\in\Gamma : |u(x)|> 2n,\;|v(x)|\leq 2n\}
		\end{flalign*}
		so that $\Gamma = \bigcup_{i=1}^4 \Gamma_i$. For \eqref{sc-lip-1} in Lemma~\ref{lem:sc-lip}, bounds on the Lipschitz constant of $f_n$ may now be obtained by integration over each of the regions in turn.  Taking $u,v\in\W$, on $\Gamma_1$ the mean value theorem applied to $f_n$ asserts that 
		\begin{align}\label{sc-lip-mvt}
		|f_n(u)-f_n(v)|&\leq |f_n'(\xi_{u,v})||u-v| 
		\end{align}
		with $\xi_{u,v}\in[0,4n]$. Since $f$ and $f'$ are both bounded on this interval and 
		\begin{align*}
		f_n'(s)=f'(s)\eta_n(s)+f(s)\eta_n'(s)\leq f'(s)+\frac{C}{n}f(s)
		\end{align*}	
		we thus have $|f_n(u)-f_n(v)|\leq C_n|u-v|$ from \eqref{sc-lip-mvt} for a constant $C_n$ dependent upon $n$.  Hence, 
		\begin{align*}
		\int_{\Gamma_1}|f(u)-f(v)|^2\,dS \leq C_n\int_{\Gamma_1} |u-v|^2\,dS  \leq C_n||u-v||_{1,p}^2.
		\end{align*} 
		On $\Gamma_2$ note that $f_n(v)=\eta_n(v)=0$ whereupon \begin{align*}
		\int_{\Gamma_2} |f_n(u)-f_n(v)|^2\,dS = \int_{\Gamma_2}|f(u)|^2|\eta_n(u)-\eta_n(v)|^2\,dS.	
		\end{align*}
		Additionally utilizing the boundedness of $\eta_n'$ this time we obtain 
		\begin{align*}
		|\eta_n(u)-\eta_n(v)|\leq \frac{C}{n}|u-v|
		\end{align*}
		so that upon recalling that $|f(u)|$ is bounded on $\Gamma_2$, 
		\begin{align*}
		\int_{\Gamma_2}|f_n(u)-f_n(v)|^2\,dS\leq C_n\int_{\Gamma_2}|u-v|^2\,dS \leq C_n||u-v||_{1,p}^2.
		\end{align*}
		On $\Gamma_3$ we have $f_n(u)=f_n(v)=0$ and the case for $\Gamma_3$ is identical to that for $\Gamma_2$ with the roles of $u$ and $v$ reversed, thus completing the proof of \eqref{sc-lip-1}.
		
		For \eqref{sc-lip-2}, take $u,v\in\W[1-\epsilon,p]$ with $||u||_{1-\epsilon,p},\,||v||_{1-\epsilon,p}\leq R$. Since 
		\begin{align*}
		|f_n(u)-f_n(v)|&\leq |\eta_n(u)||f(u)-f(v)|+|f(v)||\eta_n(u)-\eta_n(v)|\\
		&\leq |f(u)-f(v)|+\frac{C}{n}(1+|v|^r)|u-v|
		\end{align*}
		from the triangle inequality, mean value theorem applied to $\eta_n$, and the bounds on $f$ and $\eta_n'$ we obtain 
		\begin{multline}\label{sc-lip-a2}
		|f_n(u)-f_n(v)|_{4/3}^{4/3} \leq C_R^{4/3}||u-v||_{1-\epsilon,p}^{4/3} + \frac{C}{n^{4/3}}\int_{\Gamma}(1+|v|^{4r/3})|u-v|^{4/3}\,dS
		\end{multline}
		with $C_R$ provided from Lemma~\ref{lem:f-lipshitz}. In order to estimate the second term in \eqref{sc-lip-a2} we again consider the four regions defined at the onset of the proof. On $\Gamma_1$ it is the case that $|v|\leq 2n$ so that  $n^{-4/3}\leq C|v|^{-4/3}$. As such, 
		\begin{align*}
		\frac{C}{n^{4/3}}(1+|v|^{4r/3})|u-v|^{4/3}
		& \leq C(1+|v|^r)|u-v|^{4/3}.
		\end{align*}
		Moreover, this same bound can be produced on $\Gamma_4$ where  $v$ enjoys the same bound as on $\Gamma_1$, and then subsequently on $\Gamma_2$ by reversing the roles of $u$ and $v$.  Since $f_n$ is identically zero on $\Gamma_3$,  we may thus bound the final term in \eqref{sc-lip-a2} as 
		\begin{align*}
		\frac{C}{n^{4/3}}\int_{\Gamma}(1+|v|^{4r/3})|u-v|^{4/3}\,dS
		& \leq C\int_{\Gamma}(1+|v|^r)|u-v|^{4/3}\,dS\\
		&\leq C\left(1+|v|_{3r/2}^r\right) |u-v|_{4}^{4/3}\\
		&\leq C(1+||v||_{1-\epsilon}^r)||u-v||_{1-\epsilon}^{4/3}
		\end{align*}
		when recalling the trace mappings $\W[1-\epsilon,p]\tinto \Lb[4]$ and $\W[1-\epsilon,p]\tinto \Lb[3r/2]$ for sufficiently small $\epsilon$. This establishes that, in fact, 
		\begin{align*}
		|f_n(u)-f_n(v)|_{4/3} \leq C_R ||u-v||_{1-\epsilon,p}
		\end{align*}
		for a constant $C_R$ not dependent upon $n$.
\end{proof}

\bibliographystyle{abbrv}
\bibliography{../mohnick}
\end{document}